\newcommand{\bB}{{\mathbb{B}}}
\newcommand{\bC}{{\mathbb{C}}}
\newcommand{\bD}{{\mathbb{D}}}
\newcommand{\bF}{{\mathbb{F}}}
\newcommand{\bN}{{\mathbb{N}}}
  \newcommand{\A}{{\mathcal{A}}}
  \newcommand{\B}{{\mathcal{B}}}
  \newcommand{\J}{{\mathcal{J}}}
  \newcommand{\M}{{\mathcal{M}}}
  \newcommand{\Q}{{\mathcal{Q}}}
\newcommand{\fA}{{\mathfrak{A}}}
\newcommand{\fC}{{\mathfrak{C}}}
\newcommand{\fc}{{\mathfrak{c}}}
\newcommand{\fd}{{\mathfrak{d}}}
\newcommand{\fF}{{\mathfrak{F}}}
\newcommand{\fH}{{\mathfrak{H}}}
\newcommand{\fK}{{\mathfrak{K}}}
\newcommand{\fL}{{\mathfrak{L}}}
\newcommand{\fM}{{\mathfrak{M}}}
\newcommand{\fN}{{\mathfrak{N}}}
\newcommand{\fV}{{\mathfrak{V}}}
\newcommand{\fW}{{\mathfrak{W}}}
\renewcommand{\phi}{\varphi}
\newcommand{\upchi}{{\raise.35ex\hbox{$\chi$}}}
\newcommand{\ol}{\overline}
\newcommand{\bksl}{\backslash}
\newcommand{\nin}{\notin}
\newcommand{\qand}{\quad\text{and}\quad}
\newcommand{\qor}{\quad\text{or}\quad}
\newcommand{\nc}{\operatorname{nc}}
\newcommand{\comm}{\operatorname{c}}
\newcommand{\spn}{\operatorname{span}}
\newcommand{\fb}{\mathfrak{b}}
\newcommand{\fa}{\mathfrak{a}}
\newcommand{\card}{\operatorname{card }}
\newcommand{\Ann}{\operatorname{Ann}}
\newcommand{\frk}[1]{\mathfrak{#1}}
\newtheorem{lemma}{Lemma}[section]
\newtheorem{theorem}[lemma]{Theorem}
\newtheorem{corollary}[lemma]{Corollary}
\theoremstyle{definition}
\newtheorem{example}{Example}
\begin{document}
\author{Rapha\"el Clou\^atre}

\address{Department of Mathematics, University of Manitoba, Winnipeg, Manitoba, Canada R3T 2N2}

\email{raphael.clouatre@umanitoba.ca\vspace{-2ex}}
\thanks{The first author was partially supported by an NSERC Discovery Grant.}
\author{Edward J. Timko}
\email{edward.timko@umanitoba.ca\vspace{-2ex}}

\subjclass[2010]{47A13, 47A20, 47A45 (47B32, 46L52)}

\begin{abstract}
	It is known that pure row contractions with one-dimensional defect spaces can be classified up to unitary equivalence by compressions of the standard $d$-shift acting on the full Fock space. Upon settling for a softer relation than unitary equivalence, we relax the defect condition and simply require the row contraction to admit a cyclic vector. We show that cyclic pure row contractions can be ``transformed" (in a precise technical sense) into compressions of the standard $d$-shift.
Cyclic decompositions of the underlying Hilbert spaces are the natural tool to extend this fact to higher multiplicities. We show that such decompositions face multivariate obstacles of an algebraic nature.  Nevertheless, some decompositions are obtained for nilpotent commuting row contractions  by analyzing function theoretic rigidity properties of their invariant subspaces.

\end{abstract}

\title[Cyclic row contractions and rigidity of invariant subspaces]{Cyclic row contractions and \\rigidity of invariant subspaces}
\maketitle

\section{Introduction}


The structure of Hilbert space contractions has long been recognized as deeply connected to that of concrete multiplication operators on spaces of analytic functions on the unit disc.
The prototypical result in that direction is the fact that pure contractions may be viewed as compressions to co-invariant subspaces of unilateral shifts. This crucial fact was fleshed out significantly in the original edition of \cite{nagy2010}, and the resulting theory has been influential since then. 

A major theme in modern operator theory is the investigation of multivariate phenomena, prompted by the discovery of curious behaviour \cite{parrott1970},\cite{varopoulos1974}.
The multivariate counterpart to the Sz.-Nagy--Foias theory was systematically developed by Popescu in a series of papers, starting with \cite{popescu1989},\cite{popescu1991}. The setting in this case is that of row contractions, with no commutation relation assumed between the individual operators. The non-commutative multivariate theory runs parallel to the univariate one, with most classical results finding a close analogue.
The key insight into the commutative world was provided by Arveson in \cite{arveson1998} who identified the appropriate space of analytic functions on the ball to serve as a replacement for the classical Hardy space.
This space is now known as the Drury-Arveson space and is the centerpiece of many problems in function theory due its universal property \cite{AM2000}.
It should also be pointed out that the commutative world can in fact be recovered efficiently from the non-commutative one upon taking appropriate quotients, as laid out in \cite{davidson1998},\cite{popescu2006}.

Going back to the foundational single-variable result, it is known that if $T$ is a Hilbert space contraction with one-dimensional defect space (in the sense that $I-TT^*$ has rank one), then it is unitarily equivalent to a compression of the standard unilateral shift on the Hardy space of the disc. In this case, the connection with function theory is especially transparent, and tools developed in the setting of the Hardy space can be brought to bear quite successfully to understand the structure of $T$. There are corresponding results in the multivariate setting \cite{popescu1989},\cite{muller1993} that connect the study of row contractions to complex function theory in several variables. 

Unfortunately, the condition on the defect space being one-dimensional is quite restrictive, and it is not stable under natural perturbations such as scaling or similarity. It is thus desirable to replace it by a more flexible condition, such as cyclicity. The tradeoff, then, is that we cannot expect to describe $T$ up to unitary equivalence with compressions of the standard shift anymore. This relaxation idea is the basis for the very successful Jordan model theory of constrained contractions (also known as $C_0$-contractions) developed in \cite{bercovici1988}. The crucial result is that a cyclic pure contraction can be ``transformed" into a compression of the standard shift. The multivariate analogue of this relaxation procedure seems to have been overlooked, and providing it is one of the main motivation behind this project. It should be noted here that there is a well-developed model theory for arbitrary row contractions that takes place in spaces of vector-valued holomorphic functions \cite[Theorem VI.2.3]{nagy2010},\cite{bhatta2005},\cite{popescu1989char},\cite{popescu2006}. However, the focus there is different than ours, and the two strands of the theory do not overlap beyond the case of one-dimensional defect spaces.

Of course, it is desirable to deal with contractions $T$ which are not necessarily cyclic as well. The natural approach for doing so is to decompose the underlying Hilbert space into cyclic subspaces that are invariant for $T$, and then apply the cyclic result on each piece. In \cite{bercovici1988}, this is called the \emph{splitting principle}, and allows the cyclic theory to be leveraged in the general case of arbitrary multiplicity and to culminate in a powerful classification theorem that mirrors the Jordan canonical form for matrices. Inspired by this success, at the onset of this project another one of our objectives was to obtain such cyclic decompositions in the multivariate context. Surprisingly, this ambition was met with serious obstacles, as we unraveled multivariate obstructions of a seemingly purely algebraic nature. This is an interesting discovery, and shows that if a satisfactory model theory is to emerge in this context, it will have to rely on entirely new ideas and tools. At least superficially, this  is reminiscent of what prompted the work in \cite{arveson2004}.

Although cyclic decompositions appear elusive in general, we obtain some limited partial results in this direction based on considerations of independent interest on function theoretic rigidity properties of invariant subspaces. More precisely, if $\fM$ is an invariant subspace for a row contraction $T$, we are interested in describing situations where $\fM$ is completely determined by the kernel of the functional calculus associated to the restriction $T|_\fM$. We also explore the corresponding question for co-invariant subspaces. Such inquiries generate significant interest in the setting of Hilbert modules on reproducing kernel Hilbert spaces; see \cite{chenguo2003} and the references therein. Our work on this topic initiates a multivariate exploration of equivalence classes of invariant subspaces for constrained contractions, as studied in \cite{BT1991},\cite{bercovici1991},\cite{BS1996},\cite{li1999},\cite{clouatre2013invsub},\cite{clouatre2014invsub}.

We now describe the organization of the paper and state our main results. To do so, we require the following notation. For a pure commuting row contraction $T$, we denote by $\Ann_{\comm}(T)$ the set of multipliers $\phi$ on the Drury-Arveson space with the property that $\phi(T)=0$. More detail can be found in Section \ref{S:prelim}, which contains background material and gathers many preliminary results that are used throughout.

In Section \ref{S:Quasi}, we investigate the class of commuting row contractions $T$ that can be transformed into the standard commuting $d$-shift. Our analysis is based on the existence of cyclic vectors enjoying certain additional properties (Theorem \ref{T:weakorth}). One of the main results of the paper is the following (see Corollary \ref{C:cyclicann}).

\begin{theorem}\label{T:maincyclic}
Let $T=(T_1,\ldots,T_d)$ be a pure, commuting, cyclic row contraction on some Hilbert space.  Let $M$ denote the compression of the standard commuting $d$-shift associated to $\Ann_{\comm}(T)$. Then, we have that $M$ is a quasi-affine transform of $T$.
 \end{theorem}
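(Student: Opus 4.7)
The plan is to manufacture a quasi-affinity between $\mathcal{N}_T := H^2_d \ominus \overline{\Ann_{\comm}(T) \cdot H^2_d}$, on which $M$ is the compression of the standard commuting $d$-shift $S = (S_1, \dots, S_d)$, and the Hilbert space $\H$ on which $T$ acts, realizing the required intertwining relations. The ingredient that makes this possible is the cyclic vector supplied by Theorem~\ref{T:weakorth}, whose regularity is tailored to control the natural transfer map.

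First, I invoke Theorem~\ref{T:weakorth} to secure a cyclic vector $v$ for $T$ whose weak orthogonality relations with respect to the canonical orthonormal basis of $H^2_d$ yield an estimate of the form
\[
\|p(T) v\|_{\H} \leq C\, \|p\|_{H^2_d}
\]
for every polynomial $p \in \bC[z_1, \dots, z_d]$. By continuity this extends the assignment $p \mapsto p(T) v$ to a bounded linear map $\Psi \colon H^2_d \to \H$. A direct check on polynomials confirms that $\Psi$ intertwines the $d$-shift with $T$, namely $\Psi S_i = T_i \Psi$ for $1 \leq i \leq d$, and since polynomials in $T$ applied to $v$ are dense in $\H$ by cyclicity, $\Psi$ has dense range.

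The heart of the argument is the identification $\ker \Psi = \overline{\Ann_{\comm}(T) \cdot H^2_d}$. One inclusion is immediate: for $\phi \in \Ann_{\comm}(T)$ and a polynomial $f$, one has $\Psi(\phi f) = \phi(T) f(T) v = 0$, and continuity promotes this to $\overline{\Ann_{\comm}(T) \cdot H^2_d} \subseteq \ker \Psi$. The reverse inclusion is the principal obstacle. Its proof must exploit the full strength of the weak orthogonality property of $v$: roughly, the inner products $\langle \Psi(z^\alpha), \Psi(z^\beta) \rangle_{\H}$ must be compatible with the canonical pairing on $\mathcal{N}_T$ to an extent that precludes spurious cancellations in the kernel. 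I would attack this by expanding a purported $g \in \ker \Psi$ in the standard basis of $H^2_d$ and using the orthogonality data coming from Theorem~\ref{T:weakorth} to force $g$ to be orthogonal to $\mathcal{N}_T$, hence to lie in $\overline{\Ann_{\comm}(T) \cdot H^2_d}$.

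With the kernel pinned down, $\Psi$ factors as $\Psi = \widetilde{\Psi} \circ q$, where $q \colon H^2_d \to \mathcal{N}_T$ is the orthogonal projection, and $\widetilde{\Psi} \colon \mathcal{N}_T \to \H$ is bounded, injective, and of dense range. Passing the intertwining $\Psi S_i = T_i \Psi$ through the quotient yields $\widetilde{\Psi} M_i = T_i \widetilde{\Psi}$ for each $i$, so $\widetilde{\Psi}$ is a quasi-affinity exhibiting $M$ as a quasi-affine transform of $T$, completing the proof.
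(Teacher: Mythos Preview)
Your proposal has two genuine gaps.

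First, Theorem~\ref{T:weakorth} is a characterization, not an existence result: it asserts that the existence of a cyclic vector $\xi$ with $\sum_{w} T_w\xi\otimes T_w\xi$ bounded is \emph{equivalent} to the existence of an intertwining map $X:\fF^2_d\to\fH$ with dense range. It does not by itself ``secure'' such a vector, and in particular it does not furnish the estimate $\|p(T)v\|\leq C\|p\|_{H^2_d}$ you claim. The existence of such $\xi$ (equivalently, of the map you call $\Psi$) for an arbitrary pure cyclic commuting row contraction is precisely the content of Theorem~\ref{T:cyclicpure} (see also Corollary~\ref{C:cyclicweakorth}), whose proof passes through Popescu's non-commutative dilation (Theorem~\ref{T:popescudilation}) via Lemmas~\ref{L:pureshiftmult} and~\ref{L:qashift}. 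As the paper remarks after Lemma~\ref{L:qashift}, the passage through the non-commuting world is essential here; you have in effect assumed the hardest step.

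Second, your treatment of the reverse inclusion $\ker\Psi\subset[\Ann_{\comm}(T)H^2_d]$ is only a sketch, and the proposed mechanism (``orthogonality data coming from Theorem~\ref{T:weakorth}'') does not work as described: that data lives on the full Fock space and concerns the vectors $T_w\xi$ for non-commuting words $w$; it carries no direct information about which elements of $H^2_d$ are annihilated by $\Psi$. The paper avoids any direct computation of the kernel. Once $\Psi$ exists with dense range, Lemma~\ref{L:denserangeinj} shows that $\fK=(\ker\Psi)^\perp$ is co-invariant for $M_x$ and that $P_{\fK}M_x|_{\fK}\prec T$; Lemma~\ref{L:anntransf} then forces $\Ann_{\comm}(P_{\fK}M_x|_{\fK})=\Ann_{\comm}(T)=\J$; finally Lemma~\ref{L:anncoinvcomm} (which rests on the Davidson--Ramsey--Shalit description of $\M_d$-coinvariant subspaces) identifies $\fK=H_\J$ uniquely from its annihilator. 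That uniqueness statement is what substitutes for your missing inclusion, and it is not something one can extract from weak orthogonality of a cyclic vector.
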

Interestingly, despite the commuting nature of the statement, our proof hinges crucially on the non-commuting dilation results of \cite{popescu1989}.

In Section \ref{S:rigidity}, we turn to the question of rigidity of invariant and co-invariant subspaces. For general non-commuting row contractions, this problem is known to encounter significant difficulties, so we focus on the commuting setting. We illustrate that the rigidity witnessed in the univariate context can fail in several variables (Example \ref{E:invsubann}). Nevertheless, some amount of rigidity can be established in various cases. In the special case of nilpotent row contractions, our results (Theorems \ref{T:NilpNoPropSub} and \ref{T:rigidityQinv}) imply the following.

\begin{theorem}\label{T:mainrigiditynilp}
	Let $T=(T_1,\ldots,T_d)$ be a nilpotent commuting row contraction on some Hilbert space $\fH$. The following statements hold.
	\begin{enumerate}
	
	\item[\rm{(1)}] Assume that $T^*$ is cyclic. Let $\fM,\fN\subset \fH$ be invariant subspaces for $T$ such that $\Ann_{\comm}(T|_\fM)=\Ann_{\comm}(T|_\fN)$. Then, $\fM=\fN$.

	\item[\rm{(2)}] Assume that $T$ is cyclic. Let $\fM\subset \fH$ be an invariant subspace for $T$ such that $\Ann_{\comm}(T|_\fM)=\Ann_{\comm}(T)$. Then, $\fM=\fH$.
	
	\end{enumerate}
\end{theorem}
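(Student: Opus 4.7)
Both parts hinge on a common preliminary reduction. Since $T$ is nilpotent, there is some $N$ with $T^{\alpha}=0$ for every $|\alpha|\geq N$, and I would first exploit this in two ways: (i) for any multiplier $\phi$, the operator $\phi(T)$ depends only on the Taylor polynomial of $\phi$ to degree less than $N$, so that $\Ann_{\comm}(T|_{\fM})$ is determined by the polynomial ideal $\Ann_{\comm}(T|_{\fM})\cap\bC[z_1,\ldots,z_d]$; and (ii) this ideal contains $\mathfrak{m}^{N}$ (with $\mathfrak{m}=(z_1,\ldots,z_d)$), so the quotient is a finite-dimensional local $\bC$-algebra with nilpotent maximal ideal. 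Both parts will then become arguments about ideals in such algebras.

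For part (1), letting $\fa:=\Ann_{\comm}(T^*)\cap\bC[z_1,\ldots,z_d]$ and $A^*:=\bC[z_1,\ldots,z_d]/\fa$, I would use the cyclic vector $x_0\in\fH$ for $T^*$ to construct the $\bC[z]$-module isomorphism
\[
\Phi:A^*\longrightarrow\fH,\qquad p+\fa\longmapsto p(T^*)x_0,
\]
where $T^*$ acts on $A^*$ by multiplication. Surjectivity is cyclicity; injectivity uses that if $p(T^*)x_0=0$ then, by the commutativity of $T^*$, $p(T^*)q(T^*)x_0=0$ for every $q$ and hence $p(T^*)=0$ on $\fH$. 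Under $\Phi$ a $T$-invariant $\fM\subset\fH$ corresponds, via orthogonal complement, to an ideal $\J_{\fM}\subset A^*$. The key computation is that $p(T)\fM=0$ iff $\ran p(T)^*=\bar p(T^*)\fH\subset\fM^{\perp}$, which under $\Phi$ is equivalent to $\bar p\in\J_{\fM}$. Hence
\[
\Ann_{\comm}(T|_{\fM})\cap\bC[z_1,\ldots,z_d]=\{\,p:\bar p\in\J_{\fM}\,\},
\]
which determines $\J_{\fM}$ and therefore $\fM$, proving injectivity of $\fM\mapsto\Ann_{\comm}(T|_{\fM})$.

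For part (2), I would build the analogous isomorphism $A:=\bC[z_1,\ldots,z_d]/(\Ann_{\comm}(T)\cap\bC[z_1,\ldots,z_d])\cong\fH$ using the cyclic vector for $T$ itself; $T$-invariant subspaces now correspond to ideals of $A$, and the hypothesis $\Ann_{\comm}(T|_{\fM})=\Ann_{\comm}(T)$ translates to $\Ann_A(\fM)=0$ for the ring-theoretic annihilator $\Ann_A(\fM)=\{a\in A:a\fM=0\}$. The conclusion would then follow from the elementary algebraic fact that the local Artinian ring $A$, having nilpotent maximal ideal $\mathfrak{m}_A$ of some least nilpotency index $s\geq 1$, satisfies $\mathfrak{m}_A^{s-1}\subset\Ann_A(\fM)$ for any proper ideal $\fM\subsetneq A$ (since $\fM\subset\mathfrak{m}_A$ implies $\fM\mathfrak{m}_A^{s-1}\subset\mathfrak{m}_A^{s}=0$). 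As $\mathfrak{m}_A^{s-1}\neq 0$, this forces $\fM=A=\fH$ whenever $\Ann_A(\fM)=0$.

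The step I expect to require the most care is the initial passage from multiplier to polynomial annihilators, which has to be carried out uniformly across all invariant subspaces so that the module-theoretic duality in part (1) and the ring-theoretic reduction in part (2) are both valid at the level of ideals in finite-dimensional algebras. Once that reduction is in place, (1) becomes the natural correspondence between $T$-invariant subspaces of a cyclic $T^*$-module and ideals of the acting algebra, and (2) becomes the standard fact that proper ideals in a local ring with nilpotent maximal ideal always have nonzero annihilators.
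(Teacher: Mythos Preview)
Your argument is correct. For part (2) it is essentially the paper's proof (Theorem \ref{T:NilpNoPropSub}) recast in the language of local Artinian rings: the paper picks a monomial $x^{\beta}$ of maximal length with $x^{\beta}\notin\B$, which is exactly a nonzero element of $\mathfrak{m}_A^{s-1}$, and then uses $\J\subset\mathfrak{m}$ to conclude $x^{\beta}\J\subset\B$, i.e.\ $x^{\beta}\in\Ann_A(\fM)$.

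For part (1), however, your route is genuinely different and more elementary than the paper's. The paper (Corollary \ref{C:rigidity2}) does \emph{not} build the module isomorphism $A^*\cong\fH$ directly. Instead it invokes an external similarity theorem of Popescu to replace $T^*$ by a row contraction, applies the quasi-affine transform machinery of Section \ref{S:Quasi} (Corollary \ref{C:cyclicann}) to produce an element of $\Q(M_\J,YT^*Y^{-1})$, and then feeds this into the general rigidity Theorem \ref{T:rigidityQinv}, which itself uses an anti-unitary conjugation $J$ on $H^2_d$ and reduces to Lemma \ref{L:rigidcoinv}. Your direct duality --- identifying $T$-invariant subspaces with ideals of $A^*$ via $\fM\mapsto\Phi^{-1}(\fM^\perp)$ and reading off $\Ann_{\comm}(T|_\fM)\cap\bC[z]$ as the complex conjugate of that ideal --- bypasses all of this. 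The paper's longer route has the advantage of establishing Theorem \ref{T:rigidityQinv} as a stand-alone result applicable whenever $\Q(M_\J^*,T)$ is nonempty, a condition not tied to nilpotency; your argument, by contrast, uses finite-dimensionality from the outset but is self-contained and needs no external input.
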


It is relevant to mention here that it is not generally true that $T^*$ is cyclic  if $T$ is a cyclic commuting nilpotent row contraction (see Example \ref{E:maxcount}). Regarding co-invariant subspaces, we mention here the finite-dimensional version of Theorem \ref{T:rigidityQcoinv}.

\begin{theorem}\label{T:mainrigidityfd}
Let $T=(T_1,\ldots,T_d)$ be a cyclic pure commuting row contraction on some finite-dimensional Hilbert space $\fH$. Let $\fM,\fN\subset \fH$ be co-invariant subspaces for $T$ such that
	$\Ann_{\comm}(P_{\fM}T|_\fM)=\Ann_{\comm}(P_{\fN}T|_\fN).$ Then, $\fM=\fN$.
\end{theorem}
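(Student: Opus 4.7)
The approach is to reduce the problem to a simple fact in commutative algebra, exploiting finite-dimensionality to bypass the Drury--Arveson functional-calculus subtleties. First I would observe that in finite dimensions the purity of $T$ forces $T^\alpha = 0$ for all multi-indices $\alpha$ with $|\alpha| \geq N$ for some $N$: the positive operators $\sum_{|\alpha|=n}T^\alpha T^{*\alpha}$ tend to zero in norm, so each $T^\alpha$ must eventually vanish. Choosing a cyclic vector $\xi$ and setting $I = \{p \in \bC[z_1,\ldots,z_d] : p(T) = 0\}$, cyclicity forces $I = \{p : p(T)\xi = 0\}$, so $p + I \mapsto p(T)\xi$ is a linear isomorphism $\iota : \bC[z_1,\ldots,z_d]/I \to \fH$ intertwining multiplication by $z_i$ with $T_i$. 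Standard module theory then provides a bijection between $T$-invariant subspaces of $\fH$ and ideals $J \supseteq I$ in $\bC[z_1,\ldots,z_d]$, and hence (by taking orthogonal complements) a bijection $\fM \leftrightarrow J(\fM)$ between co-invariant subspaces $\fM$ and such ideals.

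Next I would unpack the compression. Because $\fM^\perp$ is $T$-invariant, the telescoping identity $(P_\fM T|_\fM)^\alpha = P_\fM T^\alpha|_\fM$ is valid, so $P_\fM T|_\fM$ is still nilpotent and in particular pure. The orthogonal projection $\fM \to \fH / \fM^\perp$ is an isometric isomorphism intertwining $P_\fM T|_\fM$ with the operator of multiplication by $z$ on $\bC[z_1,\ldots,z_d]/J(\fM)$ obtained through $\iota$, so the polynomial annihilator of $P_\fM T|_\fM$ is precisely $J(\fM)$. Nilpotency also reduces the Drury--Arveson functional calculus on $P_\fM T|_\fM$ to plugging $P_\fM T|_\fM$ into the truncation at degree $<N$ of the Taylor series of a multiplier, which yields $\Ann_{\comm}(P_\fM T|_\fM) \cap \bC[z_1,\ldots,z_d] = J(\fM)$.

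The conclusion is then automatic: intersecting both sides of the hypothesized equality of annihilators with the polynomial ring gives $J(\fM) = J(\fN)$, and the bijection of the first step forces $\fM = \fN$. The step that I expect to require the most care is the identification of the multiplier annihilator with the polynomial annihilator, since it hinges on correctly interpreting the Drury--Arveson calculus at a nilpotent element; the rest is essentially classical module theory transported to the Drury--Arveson picture, and in particular does not appear to rely on the deeper Theorem \ref{T:maincyclic} or Theorem \ref{T:mainrigiditynilp}.
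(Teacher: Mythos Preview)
Your overall strategy is sound and genuinely different from the paper's, but one claim is incorrect: purity on a finite-dimensional space does \emph{not} force nilpotency. The scalar $T=\tfrac12$ on $\bC$ is a pure contraction (since $|T^{*n}\xi|^2=4^{-n}|\xi|^2\to 0$) yet $T^n\neq 0$ for all $n$. The inference ``$\sum_{|\alpha|=n}T^\alpha T^{*\alpha}\to 0$ in norm, hence each $T^\alpha$ eventually vanishes'' is the false step.

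Fortunately nilpotency is inessential to your argument. Finite-dimensionality of $\fH$ alone makes $\bC[z_1,\ldots,z_d]/I$ finite-dimensional and $\iota$ a linear isomorphism. The compression $P_\fM T|_\fM$ is pure directly because $(P_\fM T|_\fM)_w^*=T_w^*|_\fM$ for every word $w$, so the $\M_d$-calculus is defined on it. And the identity $\Ann_{\comm}(P_\fM T|_\fM)\cap\bC[z_1,\ldots,z_d]=J(\fM)$ requires no reduction of multipliers to Taylor truncations: it holds simply because the $\M_d$-functional calculus, restricted to polynomials, is the ordinary polynomial calculus. With these repairs your proof goes through.

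By contrast, the paper derives the result from Theorem~\ref{T:rigidityQcoinv}, which in turn leans on Corollary~\ref{C:cyclicann} (i.e., Theorem~\ref{T:maincyclic}): one produces $X\in\Q(M_\J,T)$, transports $\fM,\fN$ to co-invariant subspaces of $H^2_d$ via $X^*$, invokes Lemma~\ref{L:rigidcoinv} there, and uses finite-dimensionality only to guarantee that $X^*\fM$ and $X^*\fN$ are closed. So the paper routes everything through the quasi-affine-transform machinery you explicitly set aside. Your approach is more elementary and self-contained in the finite-dimensional case; the paper's buys the more general Theorem~\ref{T:rigidityQcoinv}, of which this statement is the finite-dimensional corollary.
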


Finally, in Section \ref{S:cyclicdecomp} we examine the existence of invariant decompositions. If $T=(T_1,\ldots,T_d)$ is a $d$-tuple of operators on some Hilbert space $\fH$, a pair $(\fM,\fN)$ of non-trivial invariant subspaces for $T_1,\ldots,T_d$ is an \emph{invariant decomposition} for $T$ if $\fM\cap \fN=\{0\}$ and $\ol{\fM+\fN}=\fH$. Building on the results of Section \ref{S:rigidity}, Theorem \ref{T:splitting} establishes the existence of invariant decompositions for nilpotent commuting row contractions under some additional conditions. In another direction, Example \ref{E:FromGriff} (an adaptation of an idea from \cite{griffith1969}) exhibits a non-cyclic nilpotent commuting row contraction that admits no invariant decomposition with the property that $T|_{\fM}$ is cyclic. This is surprising, as such cyclic decompositions always exist for single constrained contractions.  We shed some light on these difficulties by consider separating vectors. These are known to be abundant for single constrained contractions, yet we show that they can fail to exist in several variables, even for  nilpotent commuting row contractions on finite-dimensional spaces (Example \ref{E:maxcount}). On the other hand, we have the following (Theorem \ref{T:AmpliateMax}).

\begin{theorem}
	Let $T=(T_1,\dots,T_d)$ be a  nilpotent commuting row contraction on some Hilbert space $\fH$. Set
	\[
	\A=\bC[x_1,\dots,x_d]/(\bC[x_1,\dots,x_d]\cap \Ann_{\comm}(T)).
	\]
	Then, for any positive integer $s\geq \dim \A$, the ampliation $T^{(s)}$ has a dense set of separating vectors in $\fH^{(s)}$.

	\label{T:mainsep}
\end{theorem}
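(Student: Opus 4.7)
The plan is to rephrase the statement in algebraic terms and then prove density by a step-by-step perturbation along the $s$ copies of $\fH$.

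Since $T$ is nilpotent and commuting, $\A$ is a finite-dimensional commutative $\bC$-algebra and, by the very definition of $\A$, it acts faithfully on $\fH$ through the polynomial functional calculus of $T$; the induced action on $\fH^{(s)}$ through $T^{(s)}$ is equally faithful. Unwinding definitions, a vector $\xi\in\fH^{(s)}$ is separating for $T^{(s)}$ exactly when $\Ann_{\A}(\xi)=\{0\}$, and so the task reduces to showing that the set of such $\xi$ is norm-dense in $\fH^{(s)}$.

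For this, given $\xi_0=(\xi_0^{(1)},\dots,\xi_0^{(s)})\in\fH^{(s)}$ and $\eps>0$, I would build $\xi=(\xi^{(1)},\dots,\xi^{(s)})$ with $\|\xi-\xi_0\|<\eps$ inductively, tracking the descending chain of ideals $I_j:=\bigcap_{i=1}^{j}\Ann_{\A}(\xi^{(i)})$, with the convention $I_0=\A$. If at some stage $I_{j-1}=\{0\}$, I take $\xi^{(j)}=\xi_0^{(j)}$. Otherwise the closed subspace
\[
\V_{j-1}:=\bigcap_{a\in I_{j-1}}\ker a\ \subseteq\ \fH
\]
is proper---were $\V_{j-1}=\fH$, then $I_{j-1}$ would annihilate $\fH$, contradicting $I_{j-1}\ne\{0\}$ and the faithfulness of $\fH$ as an $\A$-module. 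I then pick $\xi^{(j)}$ within $\eps/\sqrt{s}$ of $\xi_0^{(j)}$ and outside $\V_{j-1}$: either $\xi_0^{(j)}$ itself already lies outside $\V_{j-1}$, or one adds a sufficiently small multiple of any vector outside $\V_{j-1}$. By construction some element of $I_{j-1}$ does not annihilate $\xi^{(j)}$, so $I_j\subsetneq I_{j-1}$ and hence $\dim_{\bC}I_j<\dim_{\bC}I_{j-1}$.

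Since $\dim_{\bC}I_0=\dim\A$ and each non-trivial step strictly decreases the $\bC$-dimension, the hypothesis $s\geq\dim\A$ forces $I_s=\{0\}$, yielding a separating vector within $\eps$ of $\xi_0$. The only point requiring genuine care is the properness of $\V_{j-1}$, which is precisely where the faithfulness built into the definition of $\A$ plays its essential role; beyond that the construction is mechanical, and I do not foresee any further obstacle.
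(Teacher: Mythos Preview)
Your argument is correct and follows essentially the same strategy as the paper's: both build a strictly descending chain of annihilator ideals $I_0\supsetneq I_1\supsetneq\cdots$ by choosing each $\xi^{(j)}$ outside the kernel of some nonzero element of $I_{j-1}$, and then use the dimension bound $\dim\A$ to force $I_s=\{0\}$. The only point glossed over is the reduction from multipliers in $\M_d$ to polynomials (needed to equate ``separating for $T^{(s)}$'' with $\Ann_\A(\xi)=\{0\}$), which the paper handles via the Taylor-type expansion of Theorem~\ref{T:gleasontrick}; once that is made explicit, your proof is complete.
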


One of the recurring themes of the paper is that nilpotent commuting row contractions exhibit rich and intricate behaviours that are somehow unique to the multivariate world. Indeed, as we will see throughout, such row contractions give rise to multivariate counter-examples to classical univariate theorems of interest. This is the motivation behind our focusing on the nilpotent case in several of our main results above. While some of our proofs can doubtlessly be adapted to the wider class of commuting row contractions satisfying other types of polynomial relations, we do not to pursue such generalizations in this paper.


\section{Preliminaries}\label{S:prelim}

\subsection{The Fock spaces, the Drury-Arveson space and some associated operator algebras}\label{SS:functionspaces}

Throughout the paper, $d$ is a fixed positive integer.   Let $\bF_d^+$ denote the free semigroup on the symbols $\{1,\ldots,d\}$.
Let $\fF^2_d$ denote the \emph{full Fock space} over $\bC^d$, that is
	\[
		\fF_d^2=\bC \Omega\oplus \bigoplus_{n=1}^\infty (\bC^d)^{\otimes n}
	\]
where $\Omega$ is some distinguished unit vector. Fix an orthonormal basis $\{e_1,\ldots,e_d\}$ of $\bC^d$. If $w=k_1 k_2\ldots k_s\in \bF^+_d$, then we set
\[
e_w=e_{k_1}\otimes e_{k_2}\otimes \ldots \otimes e_{k_s}\in \fF^2_d.
\]
The set $\{e_w:w\in \bF^+_d\}$ is an orthonormal basis of $\fF^2_d$. For each $1\leq k\leq d$, we let $L_k:\fF^2_d\to \fF^2_d$ denote the left creation operator acting as
	\[
		L_k v=e_k \otimes v, \quad v\in \fF_d^2.
	\]
It is readily verified that the operators $L_1,\ldots,L_d$ are isometries with pairwise orthogonal ranges, so that
\[
\sum_{k=1}^d L_k L_k^*\leq I.
\]
The weak-$*$ closed unital subalgebra of $B(\fF^2_d)$ generated by $L_1,\ldots,L_d$ is denoted by $\fL_d$, and is usually referred to as the \emph{non-commutative analytic Toeplitz algebra} \cite{popescu1991},\cite{davidson1998alg},\cite{davidson1998}. Likewise, the norm closed unital subalgebra of $B(\fF^2_d)$ generated by $L_1,\ldots,L_d$ is denote by $\fA_d$, and is usually called the \emph{non-commutative disc algebra} \cite{popescu1991}.

The subspace $\fF^s_d\subset \fF^2_d$ spanned by the symmetric elementary tensors is the \emph{symmetric Fock space}, and it is co-invariant for $L_1,\ldots,L_d$. It was recognized by Arveson \cite{arveson1998} that $\fF^s_d$ can be identified with a space of analytic functions on the open unit ball $\bB_d\subset \bC^d$. Nowadays, this space $H^2_d$ is called the \emph{Drury-Arveson space}.

For each $1\leq k\leq d$, we denote by $x_k:\bC^d\to \bC$ the coordinate function, so that $x_k(z)=z_k$ for $z=(z_1,\ldots,z_d)\in \bC^d$. We write $\bN$ for the set of non-negative integers. We use the standard multi-index notation: for
\[
\alpha=(\alpha_1,\dots,\alpha_d)\in\bN^d
\]
we set 
\[
|\alpha|=\alpha_1+\cdots+\alpha_d,
\]
\[
\alpha!=\alpha_1!\cdots\alpha_d!
\]
and
\[ x^\alpha = x_1^{\alpha_1}\cdots x_d^{\alpha_d}. \]
The Drury-Arveson space $H^2_d$ is the Hilbert space of analytic functions $f$ on $\bB_d$ with powers series expansion
\[
f=\sum_{\alpha\in\bN^d}c_\alpha x^\alpha
\]
satisfying
\[\sum_{\alpha\in\bN^d}\frac{\alpha_1!\cdots \alpha_d!}{|\alpha|!}|c_\alpha|^2 <\infty. \]
An orthonormal basis for $H^2_d$ is given by the weighted monomials
\[
\left(\frac{|\alpha|!}{\alpha_1!\cdots \alpha_d!}\right)^{1/2}x^\alpha, \quad \alpha\in \bN^d.
\]
See \cite{arveson1998} or \cite{davidson1998} for more details. Another useful interpretation of $H^2_d$ is that it is the \emph{reproducing kernel Hilbert space} on $\bB_d$ (see \cite{agler2002}) with kernel given by the formula
\[
k(z,w)=\frac{1}{1-\langle z,w\rangle}, \quad z,w\in \bB_d.
\]
We denote by $\M_d$ the multiplier algebra of $H^2_d$. Associated to a multiplier $\phi\in \M_d$ there is a bounded linear operator $M_\phi\in B(H^2_d)$ defined as
\[
M_\phi f=\phi f, \quad f\in H^2_d.
\]
Identifying each multiplier in $\M_d$ with its associated multiplication operator, we may view $\M_d$ as a weak-$*$ closed unital subalgebra of $B(H^2_d)$. In particular, $\M_d$ is an operator algebra. Under this identification, the multiplier norm of $\phi\in \M_d$ is simply the operator norm of $M_\phi$. 
Every polynomial is a multiplier, and we denote by $\A_d$ the norm closure of the polynomials in $\M_d$. We will require the following basic fact about multipliers.

\begin{theorem}\label{T:gleasontrick}
	Let $\phi\in \M_d$. Fix $n\in \bN$ and $w\in \bB_d$. For each $\alpha\in \bN^d$ with $|\alpha|=n$, there exists a multiplier $\phi_\alpha\in \M_d$ with the property that
	\[ \phi=\sum_{|\alpha|<n}  \frac{\partial^\alpha \phi}{\partial x^\alpha}(w)\frac{(x-w)^\alpha}{\alpha!}+\sum_{|\alpha|=n} (x-w)^\alpha \phi_\alpha. \]
	\end{theorem}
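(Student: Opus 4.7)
The theorem is a Taylor-type expansion with multiplier remainder, and my approach is to reduce it by induction on $n$ to the first-order case, which is precisely Gleason's problem for the multiplier algebra $\M_d$. The key input I will use is the known fact that $\M_d$ solves Gleason's problem at every point of $\bB_d$: if $\psi \in \M_d$ satisfies $\psi(w)=0$, then there exist $\psi_1,\ldots,\psi_d \in \M_d$ with $\psi = \sum_{k=1}^d (x_k-w_k)\psi_k$. (For $\M_d$ this can be established via the reproducing-kernel structure of $H^2_d$, in the same spirit as in Arveson's work.)

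First I would handle the base case $n=0$ trivially by taking $\phi_0 = \phi$, and the case $n=1$ by applying Gleason's problem to $\phi - \phi(w)\mathbf{1}\in \M_d$, which vanishes at $w$. For the inductive step, suppose the expansion holds at level $n$, with multiplier remainders $\{\phi_\alpha\}_{|\alpha|=n}$. Apply Gleason's problem to each $\phi_\alpha - \phi_\alpha(w)\mathbf{1}$ to produce multipliers $\psi_{\alpha,1},\ldots,\psi_{\alpha,d}\in\M_d$ with
\[
\phi_\alpha = \phi_\alpha(w) + \sum_{k=1}^d (x_k-w_k)\,\psi_{\alpha,k}.
\]
Substituting back and regrouping, the tail becomes
\[
\sum_{|\alpha|=n}(x-w)^\alpha \phi_\alpha = \sum_{|\alpha|=n}\phi_\alpha(w)(x-w)^\alpha + \sum_{|\beta|=n+1}(x-w)^\beta \phi'_\beta,
\]
where $\phi'_\beta = \sum_{k:\,\beta_k\geq 1}\psi_{\beta-e_k,\,k}\in\M_d$. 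Combining with the terms of order $<n$ already present, one gets an expansion of the required form at level $n+1$, provided the constants line up with the Taylor coefficients. But multipliers of $H^2_d$ are holomorphic on $\bB_d$, and any polynomial-plus-higher-order decomposition of a holomorphic function near $w$ necessarily reproduces its Taylor coefficients; hence $\phi_\alpha(w) = \partial^\alpha\phi(w)/\alpha!$ for each $|\alpha|=n$, completing the induction.

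The only nontrivial ingredient is the solvability of Gleason's problem in $\M_d$; the iteration itself is a purely formal rearrangement of the binomial/multi-index bookkeeping. I expect the main obstacle in writing this rigorously to be quoting (or, if needed, briefly verifying) Gleason's problem for $\M_d$ at an arbitrary point $w\in\bB_d$, since the standard formulations are often stated at the origin and then transferred to general $w$ via a biholomorphic automorphism of $\bB_d$ that pulls multipliers back to multipliers.
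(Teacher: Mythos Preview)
Your proposal is correct and is essentially the same approach the paper takes: the paper's proof is the one-line ``Apply [Cor.~4.2]{gleason2005} inductively,'' where the cited result is exactly the Gleason-problem solution for $\M_d$ that you invoke, and your write-up simply spells out the induction and the bookkeeping that the paper leaves implicit. Your verification that the constants $\phi_\alpha(w)$ agree with the Taylor coefficients (via the uniqueness of the degree-$n$ Taylor polynomial for holomorphic functions) is the correct way to close the loop.
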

\begin{proof}
Apply  \cite[Cor. 4.2]{gleason2005} inductively.
\end{proof}

Next, we record useful identifications which are well known. The details of the proof appear to be difficult to track down explicitly, so we provide them for the reader's convenience.

\begin{theorem}\label{T:quotientcomm}
The following statements hold.
\begin{enumerate}

\item[\rm{(1)}]  There is a unitary operator $U:H^2_d\to \fF^s_d$ with the property that
\[
UM_{x_k}U^*=P_{\fF^s_d}L_k|_{\fF^s_d}, \quad 1\leq k\leq d.
\]

\item[\rm{(2)}] Let $\fW_d\subset \fL_d$ denote the weak-$*$ closure of the commutator ideal of $\fL_d$.Then, there is a unital completely isometric and weak-$*$ homeomorphic algebra isomorphism
\[
\Psi:\fL_d/\fW_d\to \M_d
\]
such that 
\[
\Psi(L_k+\fW_d)=U^*(P_{\fF^s_d}L_k|_{\fF^s_d})U=M_{x_k}
\]
for every $1\leq k\leq d$.

\item[\rm{(3)}] Let $\fC_d\subset \fA_d$ denote the norm closure of the commutator ideal of $\fA_d$. Then, there is a unital completely isometric algebra isomorphism
\[
\Phi:\fA_d/\fC_d\to \A_d
\]
such that 
\[
\Phi(L_k+\fC_d)=M_{x_k}, \quad 1\leq k\leq d.
\]

\end{enumerate}
\end{theorem}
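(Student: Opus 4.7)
The plan proceeds part by part. For (1), the approach is to construct $U$ explicitly at the level of orthonormal bases. For each $\alpha\in\bN^d$, fix a word $w_\alpha\in\bF^+_d$ with abelianization $\alpha$ (i.e., with $\alpha_k$ occurrences of the letter $k$). A short counting argument gives $\|P_{\fF^s_d}e_{w_\alpha}\|^2=\alpha!/|\alpha|!$, and the rescaled vectors $\sqrt{|\alpha|!/\alpha!}\,P_{\fF^s_d}e_{w_\alpha}$ form an orthonormal basis of $\fF^s_d$ indexed in parallel with the standard weighted-monomial basis of $H^2_d$. Take $U$ to be the unique unitary matching these bases entry by entry. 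Using the identity $P_{\fF^s_d}L_k P_{\fF^s_d}e_w = P_{\fF^s_d}e_{kw}$ (verified by the same counting), the intertwining $UM_{x_k}U^*=P_{\fF^s_d}L_k|_{\fF^s_d}$ reduces to a direct computation on basis vectors, where the common scalar that appears on both sides is $\sqrt{(\alpha_k+1)/(|\alpha|+1)}$.

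For (2) and (3), the plan is to build the maps via compression to $\fF^s_d$. Since $\fF^s_d$ is co-invariant for each $L_k$, it is co-invariant for $\fL_d$ and $\fA_d$, and compression yields a unital, completely contractive algebra homomorphism $\fL_d\to B(\fF^s_d)$, weak-$*$ continuous in case (2). Composing with the unitary identification from (1), the image lies in $\M_d$ (respectively $\A_d$), since the generators $M_{x_k}$ lie there and the image is weak-$*$ closed (respectively norm closed). Each commutator $[L_i,L_j]$ compresses to $[M_{x_i},M_{x_j}]=0$, so the homomorphism annihilates $\fW_d$ (respectively $\fC_d$) and descends to maps $\Psi$ and $\Phi$ as in the statement, sending $L_k+\fW_d$ (respectively $L_k+\fC_d$) to $M_{x_k}$. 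Surjectivity onto $\M_d$ and $\A_d$ is then a density argument, as the images contain the generators and are closed in the relevant topology.

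The main obstacle is upgrading complete contractivity to complete isometry; equivalently, identifying the kernel of compression with exactly $\fW_d$ in (2) and with $\fC_d$ in (3). My plan is to invoke the theorems of Davidson--Pitts and Arias--Popescu, treated in \cite{davidson1998alg} and \cite{popescu2006}, which supply this identification via a Poisson transform and commutant lifting argument on the full Fock space $\fF^2_d$; this is the genuine content of the statement, and I do not intend to reprove it. Once complete isometry of $\Psi$ is in hand, weak-$*$ homeomorphy in (2) follows from a Krein--Smulian argument applied to the weak-$*$ continuous bijection $\Psi$. For (3), the norm-closed analogue reduces to verifying $\fC_d=\fA_d\cap\fW_d$, which is also contained in \cite{popescu2006}.
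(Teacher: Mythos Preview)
Your treatment of (1) and (2) matches the paper's: part (1) is exactly the explicit basis computation underlying Arveson's identification in \cite{arveson1998}, and for (2) you and the paper both appeal to the Davidson--Pitts result \cite{davidson1998} that compression to $\fF^s_d$ realizes $\fL_d/\fW_d$ completely isometrically as $\M_d$.

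For (3), however, your route diverges from the paper's. You reduce the complete isometry of $\Phi$ to the equality $\fC_d=\fA_d\cap\fW_d$ and then cite \cite{popescu2006}. The reduction is correct: once (2) is known, $\fA_d/(\fA_d\cap\fW_d)$ embeds completely isometrically in $\M_d$ with image $\A_d$, so the only issue is whether the natural surjection $\fA_d/\fC_d\to\fA_d/(\fA_d\cap\fW_d)$ is injective. But the claim that $\fC_d=\fA_d\cap\fW_d$ is available in \cite{popescu2006} is not well supported; this identity is essentially equivalent to (3) itself, and the paper's authors explicitly note that the details ``appear to be difficult to track down,'' which is why they supply an independent argument. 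Their proof avoids the ideal-intersection question entirely: they represent $\fA_d/\fC_d$ completely isometrically via Blecher--Ruan--Sinclair, observe that the images of the generators form a commuting row contraction, and invoke Arveson's $\A_d$-functional calculus \cite{arveson1998} to produce a completely contractive map $\sigma:\A_d\to\fA_d/\fC_d$; the composition $\Psi\circ\iota\circ\sigma$ (with $\iota$ the canonical map into $\fL_d/\fW_d$) is then the identity on $\A_d$, forcing $\sigma$ to be a complete isometry onto. This argument is self-contained given (2) and the dilation theorem, whereas yours leans on a citation that would need to be pinned down precisely --- or replaced by a direct proof of $\fC_d=\fA_d\cap\fW_d$, which is not obviously easier than the BRS route.
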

\begin{proof}
(1) The existence of $U$ follows from the discussion in  \cite[Section 1]{arveson1998}.

(2) This follows from (1) and  \cite[Corollary 2.3]{davidson1998}.

(3)  This argument is based on the idea behind the proof of \cite[Corollary 8.3]{CH2018}. Let $q:\fA_d\to \fA_d/\fC_d$ denote the quotient map. By the Blecher--Ruan--Sinclair characterization of operator algebras \cite{BRS1990}, there is a Hilbert space $\fH$ and a unital completely isometric homomorphism $\pi: \fA_d/\fC_d\to B(\fH)$. Then, 
\[
(\pi(q(L_1)),\ldots, \pi(q(L_d)))
\]
is a \emph{commuting} row contraction on $\fH$. In particular, invoking \cite[Theorem 6.2]{arveson1998} for instance, we obtain a unital completely contractive homomorphism 
\[
\sigma:\A_d\to \pi(\fA_d/\fC_d)
\]
such that $\sigma(M_{x_k})=\pi(q(L_k))$ for every $1\leq k\leq d$. 
Next, let $\iota:\fA_d/\fC_d\to \fL_d/\fW_d$ be defined as
\[
\iota(T+\fC_d)=T+\fW_d, \quad T\in \fA_d.
\]
It is readily checked that $\fC_d$ is the norm closure of the ideal generated by
\[
\{L_jL_k-L_kL_j:1\leq j,k\leq d\}
\]
inside of $\fA_d$. Furthemore, it follows from \cite[Proposition 2.4]{davidson1998alg} that $\fW_d$ is the weak-$*$ closure of  the ideal generated by
\[
\{L_jL_k-L_kL_j:1\leq j,k\leq d\}
\]
inside of $\fL_d$. Hence, $\fC_d\subset \fW_d$ so that $\iota$ is a well-defined  unital completely contractive homomorphism. We note that
\[
(\Psi\circ \iota\circ  \pi^{-1}\circ \sigma)(M_{x_k})=M_{x_k}
\]
for every $1\leq k\leq d$, whence
\[
\Psi\circ \iota\circ \pi^{-1}\circ \sigma
\]
is simply the inclusion of $\A_d$ inside $\M_d$. This forces $\pi^{-1}\circ\sigma:\A_d\to \fA_d/\fC_d$ to be completely isometric and in particular surjective, since its image contains $L_k+\fC_d$ for every $1\leq k\leq d$. Take $\Phi=\pi^{-1}\circ \sigma$.
\end{proof}

When $d=1$, the space $H^2_1$ coincides with the usual Hardy space on the open unit disc $\bD$. In this case, the operator algebras $\A_1$ and $\M_1$ can be identified with the disc algebra $A(\bD)$ and the algebra of bounded analytic functions $H^\infty(\bD)$, respectively. It is important to note that when $d>1$, however, the quantities
\[
\|M_\phi\| \qand \|\phi\|_\infty=\sup_{z\in \bB_d}|\phi(z)|
\]
are in general not comparable \cite{arveson1998},\cite{davidson1998}. This makes the function theory in $H^2_d$ and $\M_d$ less transparent than it is in the algebra of bounded holomorphic functions $H^\infty(\bB_d)$. 

\subsection{Dilations to $d$-shifts}\label{SS:dilation}

We recall the main features of the multivariate generalization of the classical Sz.-Nagy--Foias model theory from \cite{nagy2010}, starting with the non-commuting setting.  define the \emph{standard $d$-shift} to be the $d$-tuple of operators $L=(L_1,L_2, \ldots, L_d)$ acting on $\fF^2_d$. More generally, a $d$-tuple $V=(V_1,\ldots,V_d)$ of operators on some Hilbert space $\fH$ is called a \emph{$d$-shift} if there is another Hilbert space $\fK$ and a unitary operator $U:\fH\to \fF^2_d\otimes \fK$ such that
\[
U V_k U^*=L_k\otimes I_\fK, \quad 1\leq k\leq d.
\]
The cardinal number $\dim \fK$ is then called the \emph{multiplicity} of the $d$-shift. Interestingly, the multiplicity of a $d$-shift can be interpreted in a different way as the next theorem shows. Recall that a subset $\Gamma\subset \fH$ is \emph{cyclic} for a $d$-tuple $T=(T_1,\ldots,T_d)$ if
\[
\fH=\ol{\spn \{T_w \gamma:w\in \bF^+_d, \gamma\in \Gamma\}}.
\]
Here, given a word $w=i_1i_2\ldots i_s\in \bF^+_d$, we use the notation
\[
T_w=T_{i_1}T_{i_2}\ldots T_{i_s}.
\]
\begin{theorem}\label{T:shiftmult}
Let $V=(V_1,\ldots.V_d)$ be a $d$-shift. Then, the multiplicity of $V$ coincides with the least cardinality of a cyclic set for $V$.
\end{theorem}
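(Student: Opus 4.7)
The plan is to work in the model $V_k = L_k \otimes I_\fK$ on $\fF^2_d \otimes \fK$, where $\kappa = \dim \fK$ is the multiplicity, and then establish matching upper and lower bounds on the least cardinality of a cyclic set. The upper bound is immediate: fixing an orthonormal basis $\{e_i\}_{i \in I}$ of $\fK$ with $|I|=\kappa$, I take $\Gamma = \{\Omega \otimes e_i : i \in I\}$. Since $(L_w \otimes I_\fK)(\Omega \otimes e_i) = e_w \otimes e_i$ and $\{e_w \otimes e_i\}_{w,i}$ is an orthonormal basis of $\fF^2_d \otimes \fK$, the set $\Gamma$ is cyclic for $V$, so the least cardinality of a cyclic set is at most $\kappa$.

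For the lower bound, I would exploit the defect operator $D = I - \sum_{k=1}^d V_k V_k^*$. In the model, a direct computation gives $D = P_{\bC \Omega} \otimes I_\fK$, so $\ran D = \bC\Omega \otimes \fK$ has dimension exactly $\kappa$. The key observation is that for each $k$, the range of $V_k = L_k \otimes I_\fK$ lies in the orthogonal complement of $\bC\Omega \otimes \fK = \ran D$, so $D V_k = 0$, and therefore $D V_w = 0$ for every word $w$ of length at least one. Consequently, if $\Gamma \subset \fH$ is a cyclic set, then applying the bounded operator $D$ to the dense subspace $\spn\{V_w \gamma : w \in \bF^+_d,\, \gamma \in \Gamma\}$ yields the subspace $\spn\{D \gamma : \gamma \in \Gamma\}$, which is thus dense in $\ran D$.

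This forces $\dim \ran D \leq |\Gamma|$ by the standard fact that the Hilbert space dimension of a closed subspace is bounded above by the cardinality of any total set. Combining the two bounds gives the stated equality. The only delicate point is ensuring that the cardinal comparison $\dim \ran D \leq |\Gamma|$ holds uniformly for finite and infinite cardinals, but this is routine. A secondary remark is that the argument implicitly confirms that the multiplicity is a well-defined invariant of the $d$-shift $V$, independent of the particular unitary identification with $\fF^2_d \otimes \fK$, since it is characterized intrinsically as the least cardinality of a cyclic set.
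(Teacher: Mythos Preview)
Your argument is correct. The paper itself does not supply a self-contained proof but simply invokes \cite[Corollary~1.10]{popescu2006}. Your route is more direct and entirely elementary: the upper bound via the cyclic set $\{\Omega\otimes e_i\}$ is immediate, and for the lower bound you use the explicit computation $D=P_{\bC\Omega}\otimes I_\fK$ together with $P_{\bC\Omega}L_k=0$ to see that $DV_w=0$ for every nonempty word, so that $D$ collapses the cyclic span of any set $\Gamma$ to $\spn\{D\gamma:\gamma\in\Gamma\}$, which must then be total in the $\kappa$-dimensional space $\ran D$. The advantage of your approach is that it is self-contained and makes the role of the defect space completely transparent; the paper's citation instead places the fact within Popescu's general theory of constrained row contractions, which is consistent with how that machinery is used later in the paper but obscures the simple mechanism at work here. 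Your closing remark that this also shows the multiplicity is an intrinsic unitary invariant of $V$ is a nice bonus not made explicit in the paper.
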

\begin{proof}
This follows from \cite[Corollary 1.10]{popescu2006}.
\end{proof}
The importance of $d$-shifts lies in the fact that they can be used to model general $d$-tuples of operators satisfying some natural necessary conditions, as we now illustrate.

A $d$-tuple $T=(T_1,\ldots,T_d)$ of operators on some Hilbert space $\fH$ is said to be a \emph{row contraction} if the row operator 
\[
[T_1,\ldots, T_d]:\fH^{(d)}\to \fH
\]
defined as
\[
[T_1,\ldots, T_d]\xi=\sum_{k=1}^d T_k \xi_k, \quad \xi=(\xi_1,\ldots,\xi_d)\in \fH^{(d)}
\]
is contractive. Here, we use the standard notation
\[
\fH^{(d)}=\fH\oplus \fH\oplus \ldots \oplus \fH.
\]
Equivalently, $T$ is a row contraction if and only if
\[
\sum_{k=1}^d T_k T_k^*\leq I. 
\]
The row contraction $T$ is said to be \emph{pure} if
\[ \lim_{n\to\infty} \sum_{|w|=n} \|T_w^*\xi\|^2 = 0, \quad \xi\in \fH. \]
Furthermore, we define the \emph{defect space} of $T$ to be 
\[
\Delta_T=\ol{\left(I-\sum_{k=1}^d T_k T_k^* \right)\fH},
\]
and the \emph{defect} of $T$ to be 
\[
\fd_T=\dim \Delta_T.
\]
A straightforward calculation reveals that $d$-shifts are pure row contractions. The following result shows that in some sense they are the universal example.

\begin{theorem}\label{T:popescudilation}
Let $T=(T_1,\ldots,T_d)$ be a $d$-tuple of operators on some Hilbert space $\fH$. Then, the following statements are equivalent.
\begin{enumerate}

\item[\rm{(i)}] The $d$-tuple $T$ is a pure row contraction.

\item[\rm{(ii)}] There is another Hilbert space $\fK$ containing $\fH$ along with a $d$-shift $V=(V_1,\ldots,V_d)$ on $\fK$ such that
	\[
		T^*_k=V_k^*|_{\fH}, \quad 1\leq k\leq d. 
	\]
\end{enumerate}
In this case, the multiplicity of the $d$-shift $V$ is $\fd_T$.
\end{theorem}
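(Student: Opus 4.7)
The plan is to mimic Popescu's original non-commutative Sz.-Nagy construction from \cite{popescu1989}, realizing the dilation explicitly on $\fF^2_d \otimes \Delta_T$ via a Poisson-type kernel, so that the multiplicity claim falls out for free.

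The direction (ii) $\Rightarrow$ (i) is the quick half. If $V \cong L \otimes I_\fK$, then $\sum_k V_k V_k^* = \big(\sum_k L_k L_k^*\big)\otimes I_\fK \leq I$, so $V$ is a row contraction; moreover, $\sum_{|w|=n} L_w L_w^*$ is the orthogonal projection onto $\bigoplus_{|u|\geq n}\bC e_u$, which decreases to $0$ strongly, so $V$ is pure. Compressing to $\fH$ via $T_k^* = V_k^*|_\fH$ preserves both properties, yielding (i).

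For the substantive direction (i) $\Rightarrow$ (ii), I would set $D_T = \big(I - \sum_k T_k T_k^*\big)^{1/2}$ so that $\Delta_T = \overline{\ran D_T}$, and define the Poisson kernel $W : \fH \to \fF^2_d \otimes \Delta_T$ by
\[
W\xi = \sum_{w \in \bF^+_d} e_w \otimes D_T T_w^* \xi.
\]
Using $D_T^2 = I - \sum_k T_k T_k^*$ and the disjoint decomposition $\{w' : |w'|=n+1\} = \{wk : |w|=n,\ 1\leq k\leq d\}$, the squared norm telescopes:
\[
\|W\xi\|^2 = \sum_{n=0}^\infty \Big(\sum_{|w|=n}\|T_w^*\xi\|^2 - \sum_{|w|=n+1}\|T_w^*\xi\|^2\Big) = \|\xi\|^2 - \lim_{n\to\infty}\sum_{|w|=n}\|T_w^*\xi\|^2.
\]
This is exactly the step where the purity hypothesis is used: the limit vanishes, so $W$ is isometric (and in particular the formal series defining $W\xi$ converges). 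The intertwining $(L_k\otimes I_{\Delta_T})^* W = W T_k^*$ is then a direct calculation using $L_k^* e_{kw'} = e_{w'}$ with $L_k^* e_w = 0$ otherwise, together with $T_{kw'}^* = T_{w'}^* T_k^*$. Identifying $\fH$ with $W\fH$, the $d$-shift $V = L \otimes I_{\Delta_T}$ does the job; by construction its multiplicity is $\dim \Delta_T = \fd_T$, settling the final assertion.

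The main place where care is required is a bookkeeping issue rather than a deep obstacle: the telescoping and the intertwining both depend on the convention $T_w = T_{i_1}\cdots T_{i_s}$ for $w = i_1\cdots i_s$, so the expansion of $T_w^*$ (which reverses the order) must be matched carefully with the action of $L_k^*$ on $e_w$. Getting this convention aligned is the one point where a sign-style error could derail the proof; once it is set, everything else is automatic and no minimality argument is needed because the construction lands exactly in $\fF^2_d \otimes \Delta_T$.
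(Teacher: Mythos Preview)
The paper does not give its own argument here; its proof is a one-line citation to \cite[Theorems 2.1 and 2.8]{popescu1989}. Your proposal correctly reconstructs exactly that Popescu argument (the Poisson-kernel isometry $W\xi=\sum_w e_w\otimes D_T T_w^*\xi$ into $\fF^2_d\otimes\Delta_T$, the telescoping norm identity that uses purity, and the intertwining $(L_k\otimes I)^*W=WT_k^*$), so it is both correct and the same approach as the cited source.
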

\begin{proof}
This is  \cite[Theorems 2.1 and 2.8]{popescu1989}.
\end{proof}

Of particular interest for us in this paper are row contractions $T=(T_1,\dots,T_d)$ with the property that $T_1,\dots,T_d$ commute with each other.
In this case, we say that $T$ is a \emph{commuting} row contraction. While Theorem \ref{T:popescudilation} applies just as well to commuting row contractions, it fails to encode the commutativity. In particular, one may want to replace the $d$-shift therein by some ``commutative" version which acts on the \emph{symmetric} Fock space (or Drury-Arveson space), as opposed to the full Fock space. To make this precise, recall that for each $1\leq k\leq d$ the operator $M_{x_k}\in B(H^2_d)$ denotes the multiplication operator by the coordinate function $x_k$. A standard calculation reveals that the commuting $d$-tuple $M_x=(M_{x_1},\ldots,M_{x_d})$ is a pure row contraction. We will refer to this commuting row contraction as the \emph{standard commuting $d$-shift}. More generally, a commuting $d$-tuple $Z=(Z_1,\ldots,Z_d)$ of operators on some Hilbert space $\fH$ is called a \emph{commuting $d$-shift} if there is another Hilbert space $\fK$ and a unitary operator $U:\fH\to H^2_d\otimes \fK$ such that
\[
U Z_k U^*=M_{x_k}\otimes I_\fK, \quad 1\leq k\leq d.
\]
The cardinal number $\dim \fK$ is then called the \emph{multiplicity} of the commuting $d$-shift. Just like their non-commuting counterparts, commuting $d$-shifts occupy a universal role among pure commuting row contractions.

\begin{theorem}\label{T:MVdilation}
Let $T=(T_1,\ldots,T_d)$ be a $d$-tuple of operators on some Hilbert space $\fH$. Then, the following statements are equivalent.
\begin{enumerate}

\item[\rm{(i)}] The $d$-tuple $T$ is a pure commuting row contraction.

\item[\rm{(ii)}] There is another Hilbert space $\fK$ containing $\fH$ along with a commuting $d$-shift $Z=(Z_1,\ldots,Z_d)$ on $\fK$ such that
	\[
		T^*_k=Z_k^*|_{\fH}, \quad 1\leq k\leq d. 
	\]
\end{enumerate}
In this case, the multiplicity of the commuting $d$-shift $Z$ is $\fd_T$.
\end{theorem}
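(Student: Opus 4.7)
The plan is to reduce to the non-commutative dilation result of Theorem~\ref{T:popescudilation} and then project onto the symmetric Fock space. The direction (ii)$\Rightarrow$(i) is routine: a commuting $d$-shift $M_x \otimes I_\fK$ on $H^2_d\otimes\fK$ is itself a pure commuting row contraction (immediate from the monomial basis of $H^2_d$), and these properties pass to co-invariant restrictions. Concretely, commutativity follows from $(T_jT_k)^* = Z_k^*Z_j^*|_\fH = Z_j^*Z_k^*|_\fH = (T_kT_j)^*$, row contractivity from $\sum_k T_kT_k^* = P_\fH\bigl(\sum_k Z_k Z_k^*\bigr) P_\fH \leq I$, and purity from $\|T_w^*\xi\| = \|Z_w^*\xi\|$ for $\xi\in\fH$.

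For (i)$\Rightarrow$(ii), I would first invoke Theorem~\ref{T:popescudilation} (temporarily ignoring commutativity) to obtain a Hilbert space $\fK\supseteq\fH$ together with a (non-commuting) $d$-shift $V=(V_1,\ldots,V_d)$ on $\fK$ of multiplicity $\fd_T$ such that $T_k^* = V_k^*|_\fH$. Unitarily I identify $\fK = \fF^2_d\otimes\E$ with $\dim\E=\fd_T$ and $V_k = L_k\otimes I_\E$. The crux of the argument is then to show that $\fH \subseteq \fF^s_d\otimes\E$.

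To accomplish this, I would argue as follows. Since $\fH$ is co-invariant for $V$, one has $V_w^*\xi \in \fH$ for every $\xi\in\fH$ and every $w\in\bF^+_d$. Using $V_k^*|_\fH = T_k^*$ together with the commutativity of $T$,
\[
(V_jV_k - V_kV_j)^* V_w^*\xi = (T_jT_k - T_kT_j)^* V_w^*\xi = 0
\]
for all $j,k$. Equivalently, $\xi \perp V_w(V_jV_k - V_kV_j)u$ for every $u\in\fK$ and all choices of $w,j,k$. On the other hand, the orthogonal complement $(\fF^s_d\otimes\E)^\perp = (\fF^s_d)^\perp\otimes\E$ is precisely the closed linear span of such vectors: the anti-symmetric subspace of $(\bC^d)^{\otimes n}$ is generated under the $S_n$-action by adjacent transpositions, and each such transposition-difference is realized as $L_{w_1}(L_jL_k - L_kL_j)e_{w_2}$. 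Hence $\xi \in \fF^s_d\otimes\E$.

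Having placed $\fH$ inside the $V$-co-invariant subspace $\fF^s_d\otimes\E$, the compression $Z_k := P_{\fF^s_d\otimes\E}V_k|_{\fF^s_d\otimes\E}$ is, by Theorem~\ref{T:quotientcomm}(1) applied on the first tensor factor, unitarily equivalent to $M_{x_k}\otimes I_\E$. Thus $Z=(Z_1,\ldots,Z_d)$ is a commuting $d$-shift on $H^2_d\otimes\E$ of multiplicity $\dim\E=\fd_T$, and $T_k^* = V_k^*|_\fH = Z_k^*|_\fH$ because $\fH \subset \fF^s_d\otimes\E$. The main technical obstacle I anticipate is the identification $(\fF^s_d)^\perp = \overline{\spn}\{L_w(L_jL_k - L_kL_j)u\}$; this is a combinatorial fact about the action of $S_n$ on $(\bC^d)^{\otimes n}$, but it could alternatively be extracted from Theorem~\ref{T:quotientcomm}(2), where $\fW_d$ is described as the weak-$*$ closure of the commutator ideal of $\fL_d$ and $H^2_d$ is identified with $\fF^s_d$ via $U$.
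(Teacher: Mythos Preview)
Your argument is correct. The paper itself does not prove this statement at all: its proof consists of a single citation to \cite[Theorem 4.5]{arveson1998} (with \cite[Theorem 2.1]{popescu2006} as an alternative reference). So there is no approach to compare with in the strict sense.

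That said, your route---lift via Theorem~\ref{T:popescudilation} to a non-commuting $d$-shift on $\fF^2_d\otimes\E$, then use commutativity of $T$ to show $\fH\subset\fF^s_d\otimes\E$, and finally compress---is a standard and clean way to recover the commutative dilation from the non-commutative one. The one technical point you flag, namely that $(\fF^s_d)^\perp$ coincides with the closed $\fL_d$-invariant subspace generated by the ranges of $L_jL_k-L_kL_j$, is exactly \cite[Proposition 2.4]{davidson1998alg}, which the paper invokes elsewhere (e.g.\ in the proofs of Theorem~\ref{T:quotientcomm} and Theorem~\ref{T:cyclicpure}). With that reference in hand your orthogonality computation goes through verbatim, and the multiplicity claim follows since $\dim\E=\fd_T$ is inherited from Theorem~\ref{T:popescudilation}. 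Arveson's original proof in \cite{arveson1998} proceeds instead by a direct construction of the Poisson kernel map into $H^2_d\otimes\Delta_T$; your approach trades that analytic construction for the structure theory of $\fL_d$, which is arguably more in keeping with the paper's overall philosophy of recovering commutative facts from non-commutative ones.
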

\begin{proof}
This is  \cite[Theorem 4.5]{arveson1998} (alternatively, see \cite[Theorem 2.1]{popescu2006}).
\end{proof}

\subsection{Functional calculus and constrained row contractions}\label{SS:functcalc}

In \cite{popescu1991}, Popescu extends the classical von Neumann inequality to the setting of row contractions and defines an appropriate functional calculus, as follows.

\begin{theorem}\label{T:functcalcnc}
Let $T=(T_1,\ldots,T_d)$ be a row contraction on some Hilbert space $\fH$. Then, there is a unital completely contractive homomorphism $\alpha_T:\fA_d\to B(\fH)$ such that $\alpha_T(L_k)=T_k$ for every $1\leq k\leq d$. If $T$ is pure, then $\alpha_T$ extends to a unital completely contractive and weak-$*$ continuous homomorphism $\beta_T:\fL_d\to B(\fH)$.
\end{theorem}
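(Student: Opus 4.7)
The plan is to first treat the pure case by exploiting the dilation from Theorem \ref{T:popescudilation}, and then to reduce the general case to the pure case by a scaling and limiting argument.

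Assume first that $T$ is pure. By Theorem \ref{T:popescudilation}, there is a Hilbert space $\fK$ such that $\fH$ embeds isometrically into some Hilbert space on which acts a $d$-shift $V=(V_1,\dots,V_d)$, with $T_k^*=V_k^*|_\fH$; after applying the unitary implementing the $d$-shift structure we may as well assume $V_k=L_k\otimes I_\fK$ acting on $\fF^2_d\otimes\fK$, and $\fH\subseteq\fF^2_d\otimes\fK$ with $\fH$ co-invariant for each $L_k\otimes I_\fK$. I would then define
\[
\beta_T(A)=P_\fH(A\otimes I_\fK)|_\fH, \qquad A\in\fL_d.
\]
Complete contractivity is immediate because $\beta_T$ factors as the ampliation $A\mapsto A\otimes I_\fK$ (a unital $*$-homomorphism at the level of inclusions $\fL_d\hookrightarrow B(\fF^2_d)$, hence completely contractive) followed by compression to $\fH$. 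Weak-$*$ continuity follows from the fact that both the ampliation $A\mapsto A\otimes I_\fK$ and the compression $X\mapsto P_\fH X|_\fH$ are weak-$*$ (i.e.\ ultraweakly) continuous. The only point requiring care is multiplicativity: since $\fH$ is co-invariant for every $L_k\otimes I_\fK$, it is co-invariant for every element of the weak-$*$ closed algebra they generate together with the identity, namely $\{A\otimes I_\fK:A\in\fL_d\}$. Writing such an operator as a $2\times 2$ block matrix with respect to $\fH\oplus\fH^\perp$ in the upper triangular form forced by co-invariance, a direct block multiplication shows $P_\fH(AB\otimes I_\fK)|_\fH=\beta_T(A)\beta_T(B)$. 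Finally, $\beta_T(L_k)=P_\fH(L_k\otimes I_\fK)|_\fH=P_\fH V_k|_\fH=T_k$, using that $\fH$ is co-invariant for $V_k$ and $T_k^*=V_k^*|_\fH$.

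To handle a general row contraction $T$, I would pass to the scaled tuples $rT=(rT_1,\dots,rT_d)$ for $0<r<1$. Each $rT$ is again a row contraction, and moreover it is pure: for every $\xi\in\fH$,
\[
\sum_{|w|=n}\|(rT_w)^*\xi\|^2 = r^{2n}\sum_{|w|=n}\|T_w^*\xi\|^2 \le r^{2n}\|\xi\|^2\xrightarrow[n\to\infty]{}0.
\]
The pure case therefore provides, for each $r\in(0,1)$, a unital completely contractive homomorphism $\beta_{rT}:\fL_d\to B(\fH)$ sending $L_k$ to $rT_k$. Restricting to $\fA_d$ and defining $\alpha_{rT}=\beta_{rT}|_{\fA_d}$, I would then show that for any $A\in\fA_d$ the net $(\alpha_{rT}(A))_{r\to 1^-}$ converges in norm to an element $\alpha_T(A)\in B(\fH)$. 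For polynomials in $L_1,\ldots,L_d$ this is immediate because $\alpha_{rT}(p(L))=p(rT)\to p(T)$ in norm. Combined with the uniform bound $\|\alpha_{rT}\|_{\mathrm{cb}}\le 1$ and the density of polynomials in $\fA_d$, a standard $\varepsilon/3$ argument promotes this to all of $\fA_d$. The limit $\alpha_T$ then inherits the homomorphism property and the complete contractivity from the approximants, and satisfies $\alpha_T(L_k)=T_k$.

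The step I expect to be most delicate is verifying multiplicativity of the compression in the pure case. The co-invariance of $\fH$ yields the needed block triangular form of $L_k\otimes I_\fK$, but one must be careful that this triangular form is preserved under the weak-$*$ closure: co-invariance for the generators propagates to all polynomials in them and then, by weak-$*$ continuity of the multiplication on each side separately, to the entire algebra $\fL_d\otimes I_\fK$. Once that is in place, the block matrix computation is routine, and the rest of the argument assembles smoothly.
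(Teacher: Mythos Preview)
Your argument is correct. The paper itself does not supply a proof but simply defers to Popescu's original results \cite{popescu1991,popescu1996}; your write-up essentially reconstructs that argument from first principles using the dilation theorem. The route you take---compress the ampliation $A\mapsto A\otimes I_\fK$ to the co-invariant subspace $\fH$ furnished by Theorem~\ref{T:popescudilation}, check multiplicativity via the block upper-triangular form, and then handle the non-pure case by scaling $T\mapsto rT$ and passing to the limit on $\fA_d$---is exactly the standard mechanism behind Popescu's functional calculus. One organizational remark: you invoke Theorem~\ref{T:popescudilation}, which in the paper is stated \emph{after} the present theorem, but since both are imported independently from \cite{popescu1989} and \cite{popescu1991} respectively there is no circularity.
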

\begin{proof}
Combine \cite[Theorems 3.6 and 3.9]{popescu1991} with \cite[Theorem 2.1]{popescu1996}.
\end{proof}

 There is a version of this result adapted to the commuting setting, and the resulting homomorphisms are naturally compatible.
 
\begin{corollary}\label{C:functcalccomm}
Let $T=(T_1,\ldots,T_d)$ be a commuting row contraction on some Hilbert space $\fH$. Let $\fC_d\subset \fA_d$ and $\fW_d\subset \fL_d$ denote respectively the norm closure and the weak-$*$ closure of the commutator ideals. Let $q:\fA_d\to \fA_d/\fC_d$ and $q_w:\fL_d\to \fL_d/ \fW_d$ be the corresponding quotient maps. Let $\Phi:\fA_d/\fC_d\to \A_d$ and $\Psi:\fL_d/\fW_d\to \M_d$ be the canonical isomorphisms from Theorem \ref{T:quotientcomm}. Then, the following statements hold.
\begin{enumerate}

\item[\rm{(1)}] There is a unital completely contractive homomorphism $\widehat{\alpha_T}:\A_d\to B(\fH)$ such that $\widehat {\alpha_T}(M_{x_k})=T_k$ for every $1\leq k\leq d$. Moreover, we have $\alpha_T=\widehat{\alpha_T}\circ \Phi\circ q$. 

\item[\rm{(2)}] If $T$ is pure, then $\widehat{\alpha_T}$ extends to a unital completely contractive and weak-$*$ continuous homomorphism $\widehat{\beta_T}:\M_d\to B(\fH)$. Moreover, we have  $\beta_T=\widehat{\beta_T}\circ \Psi\circ q_w$. 
\end{enumerate}
\end{corollary}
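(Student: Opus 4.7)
The plan is to exploit the commutativity of $T$ to show that $\alpha_T$ annihilates the commutator ideal $\fC_d$, and thus factors through the quotient $\fA_d/\fC_d$; composing with the inverse of $\Phi$ from Theorem \ref{T:quotientcomm}(3) will produce $\widehat{\alpha_T}$. A parallel argument, with care taken about weak-$*$ continuity, will yield $\widehat{\beta_T}$.

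For (1), I would first observe that because $T$ is commuting and $\alpha_T$ is a homomorphism, one has
\[ \alpha_T(L_jL_k - L_kL_j) = T_jT_k - T_kT_j = 0 \]
for all $1 \leq j,k \leq d$. Since $\alpha_T$ is a bounded homomorphism, its kernel is a norm-closed two-sided ideal of $\fA_d$. As recalled in the proof of Theorem \ref{T:quotientcomm}(3), $\fC_d$ is precisely the norm closure of the two-sided ideal in $\fA_d$ generated by the commutators $L_jL_k - L_kL_j$, so $\fC_d \subseteq \ker \alpha_T$. The universal property of the quotient then provides a unital completely contractive homomorphism $\widetilde{\alpha_T}\colon \fA_d/\fC_d \to B(\fH)$ with $\alpha_T = \widetilde{\alpha_T} \circ q$. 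Setting $\widehat{\alpha_T} = \widetilde{\alpha_T} \circ \Phi^{-1}$ gives a unital completely contractive homomorphism $\A_d \to B(\fH)$, and the identity $\Phi(L_k + \fC_d) = M_{x_k}$ from Theorem \ref{T:quotientcomm}(3) yields
\[ \widehat{\alpha_T}(M_{x_k}) = \widetilde{\alpha_T}(L_k + \fC_d) = \alpha_T(L_k) = T_k. \]
Rearranging gives $\alpha_T = \widehat{\alpha_T}\circ \Phi \circ q$, as required.

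For (2), when $T$ is pure, Theorem \ref{T:functcalcnc} furnishes the weak-$*$ continuous extension $\beta_T\colon \fL_d \to B(\fH)$, which still satisfies $\beta_T(L_jL_k - L_kL_j) = 0$. The kernel of a weak-$*$ continuous homomorphism is a weak-$*$ closed two-sided ideal, and by \cite[Proposition 2.4]{davidson1998alg} (cited in the proof of Theorem \ref{T:quotientcomm}(3)) $\fW_d$ is the weak-$*$ closed ideal of $\fL_d$ generated by the commutators $L_jL_k - L_kL_j$. Hence $\fW_d \subseteq \ker \beta_T$, and passing to the quotient produces a unital completely contractive homomorphism $\widetilde{\beta_T}\colon \fL_d/\fW_d \to B(\fH)$ such that $\beta_T = \widetilde{\beta_T}\circ q_w$. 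Since $q_w$ is a weak-$*$ quotient map and $\beta_T$ is weak-$*$ continuous, $\widetilde{\beta_T}$ is weak-$*$ continuous. Defining $\widehat{\beta_T} = \widetilde{\beta_T}\circ \Psi^{-1}$ and invoking the fact from Theorem \ref{T:quotientcomm}(2) that $\Psi$ is a weak-$*$ homeomorphism, we see that $\widehat{\beta_T}\colon \M_d \to B(\fH)$ is unital, completely contractive, and weak-$*$ continuous, with $\widehat{\beta_T}(M_{x_k}) = T_k$; by construction $\beta_T = \widehat{\beta_T}\circ \Psi \circ q_w$. Finally, the restriction of $\widehat{\beta_T}$ to $\A_d$ agrees with $\widehat{\alpha_T}$ on the generators $M_{x_k}$ and hence on all of $\A_d$, so $\widehat{\beta_T}$ genuinely extends $\widehat{\alpha_T}$.

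The only place requiring attention is the weak-$*$ continuity in part (2): one must know that the quotient $\fL_d/\fW_d$ carries its natural weak-$*$ structure inherited from predual considerations and that $\Psi$ respects it. Both facts are provided by the cited Theorem \ref{T:quotientcomm}(2), so the argument is essentially bookkeeping once that machinery is in hand.
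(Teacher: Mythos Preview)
Your proof is correct and follows essentially the same route as the paper's: factor $\alpha_T$ (resp.\ $\beta_T$) through the quotient by the commutator ideal and compose with $\Phi^{-1}$ (resp.\ $\Psi^{-1}$). The only minor difference is in justifying the weak-$*$ continuity of the induced map $\widetilde{\beta_T}$: the paper invokes a Krein--Smulyan argument, whereas you appeal directly to $q_w$ being a weak-$*$ quotient map --- both are standard and valid.
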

\begin{proof}
Since $T$ is commuting, we see that $\fC_d\subset \ker \alpha_T$, so there is a unital completely contractive homomorphism $\alpha_T':\fA_d/\fC_d\to B(\fH)$ such that $\alpha_T=\alpha'_T\circ q$. Put $\widehat{\alpha_T}=\alpha'_T\circ \Phi^{-1}$, and (1) follows. 

For (ii), we note that $\fW_d\subset \ker \beta_T$, so there is a unital completely contractive homomorphism $\beta_T':\fL_d/\fW_d\to B(\fH)$ such that $\beta_T=\beta'_T\circ q_w$. 
A standard compactness argument using the Krein-Smulyan theorem shows that $\beta'_T$ is in fact weak-$*$ continuous.
Statement (2) follows upon setting $\widehat{\beta_T}=\beta'_T\circ \Psi^{-1}$.
\end{proof}

We typically make no explicit mention of the homomorphisms $\alpha_T, \widehat{\alpha_T},\beta_T$ and $\widehat{\beta_T}$ defining the functional calculi, and for an appropriate element $\theta$ we simply write $\theta(T)$.

The existence of a weak-$*$ continuous extension to $\M_d$ of the $\A_d$--functional calculus for a commuting row contraction does not require $T$ to be pure. Indeed, the class of so-called \emph{absolutely continuous} commuting row contractions is  larger \cite{CD2016abscont},\cite{CD2016duality}. Nevertheless, for the purposes of this work we will restrict our attention to the pure case.

Let $T=(T_1,\ldots,T_d)$ be a pure row contraction. Then, the \emph{non-commutative annihilator} of $T$ is the weak-$*$ closed two-sided ideal of $\fL_d$ given by
\[ \Ann_{\nc}(T)=\{\theta\in\fL_d: \theta(T)=0\}. \]
We say that $T$ is \emph{constrained} if $\Ann_{\nc}(T)$ is non-trivial.
If $T$ happens to be commuting, then its \emph{commutative annihilator} is  the weak-$*$ closed  ideal of $\M_d$ given by
\[
\Ann_{\comm}(T)=\{\theta\in \M_d:\theta(T)=0\}.
\]
It follows from Corollary \ref{C:functcalccomm} that 
\[
\Ann_{\nc}(T)=(\Psi\circ q_w)^{-1}(\Ann_{\comm}(T))
\]
so in particular
\begin{equation}\label{Eq:ann}
\Ann_{\comm}(T)=(\Psi \circ q_w)(\Ann_{\nc}(T)).
\end{equation}
In the classical case where $d=1$, a constrained contraction $T$ is usually said to be \emph{of class $C_0$} \cite{bercovici1988}. We feel our choice of terminology is a bit more descriptive, and it is consistent with its use in \cite{popescu2006}. We also mention that an absolutely continuous contraction for which the associated $H^\infty$--functional calculus has non-trivial kernel is automatically pure, and thus constrained. To see this, combine the proof of \cite[Lemma II.1.12]{bercovici1988} with \cite[Theorem 3.2]{CD2016abscont}. For commuting row contractions, the situation is more complicated: see  \cite[Theorem 5.4]{CD2016abscont} and the discussion that follows it.
Let $\fb\subset \fL_d$ be a weak-$*$ closed two-sided ideal. We let
\[
[\fb \fF^2_d]=\ol{\spn\{\theta v:\theta\in \fb, v\in \fF^2_d\}}
\]
and
\[
\fF_\fb=\fF^2_d\ominus [\fb \fF^2_d].
\]
Then, $\fF_\fb$ is co-invariant for $\fL_d+\fL_d'$, where $\fL_d'\subset B(\fF^2_d)$ denotes the commutant of $\fL_d$. Define
\[
L_{\fb}=(P_{\fF_\fb}L_1|_{\fF_\fb},\ldots,P_{\fF_\fb}L_1|_{\fF_\fb}).
\]
It is readily verified that $L_{\fb}$ is a pure row contraction, and it is a consequence of the following result that $\Ann_{\nc}(L_\fb)=\fb$. Thus, the row contraction $L_{\fb}$ provides a basic example of a constrained row contraction.

 \begin{lemma}\label{L:anncoinv}
Let $\fK\subset \fF^2_d$ be a closed subspace which is co-invariant for $\fL_d+\fL_d'$. Let $\fb=\Ann_{\nc}(P_{\fK}L_1|_{\fK},\ldots,P_{\fK}L_d|_{\fK})$. Then, $\fb$ is the unique weak-$*$ closed two-sided ideal of $\fL_d$ with the property that $\fK=\fF_\fb$.
\end{lemma}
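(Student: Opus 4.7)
The plan hinges on the Davidson--Pitts classification \cite[Theorem 2.1]{davidson1998alg} of weak-$*$ closed two-sided ideals of $\fL_d$ in terms of subspaces of $\fF^2_d$ invariant for $\fL_d + \fL_d'$; specifically, the map $\fa \mapsto [\fa \fF^2_d]$ is a bijection onto such subspaces. Assuming the identity $[\fb \fF^2_d] = \fK^\perp$ has been established, the uniqueness assertion is immediate from injectivity of this map: any other weak-$*$ closed two-sided ideal $\fa$ of $\fL_d$ with $\fF_\fa = \fK$ would satisfy $[\fa \fF^2_d] = \fK^\perp = [\fb \fF^2_d]$ and must therefore coincide with $\fb$. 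So the substantive task is to verify $[\fb \fF^2_d] = \fK^\perp$.

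Write $T_k = P_{\fK} L_k|_{\fK}$ for each $k$. Since $\fK$ is co-invariant for $\fL_d$, the tuple $T = (T_1, \ldots, T_d)$ is a pure row contraction, and the weak-$*$ continuous $\fL_d$-functional calculus $\beta_T$ furnished by Theorem \ref{T:functcalcnc} agrees with the compression formula $\theta(T) = P_{\fK} \theta|_{\fK}$ on polynomials in $L_1, \ldots, L_d$; weak-$*$ continuity of both sides then promotes this identity to all $\theta \in \fL_d$. Consequently $\theta \in \fb$ if and only if $\theta \fK \subseteq \fK^\perp$. Using that $\fK^\perp$ is itself $\fL_d$-invariant, any such $\theta$ in fact satisfies $\theta \fF^2_d \subseteq \fK^\perp$. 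This immediately gives the inclusion $[\fb \fF^2_d] \subseteq \fK^\perp$.

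For the reverse inclusion, the co-invariance of $\fK$ under $\fL_d + \fL_d'$ means that $\fK^\perp$ is invariant for $\fL_d + \fL_d'$, so the Davidson--Pitts bijection supplies a weak-$*$ closed two-sided ideal $\fa$ of $\fL_d$ with $[\fa \fF^2_d] = \fK^\perp$. Every $\theta \in \fa$ in particular sends $\fK$ into $\fK^\perp$, and hence belongs to $\fb$ by the characterization obtained above; thus $\fa \subseteq \fb$, and therefore $\fK^\perp = [\fa \fF^2_d] \subseteq [\fb \fF^2_d]$, which yields the desired equality. The main obstacle in this strategy is precisely this reverse inclusion: I do not see how to secure it without invoking the Davidson--Pitts classification, as directly approximating vectors in $\fK^\perp$ by sums of the form $\theta v$ with $\theta \in \fb$ and $v \in \fF^2_d$ appears to be out of reach by elementary means.
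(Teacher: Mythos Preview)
Your argument is correct and follows essentially the same route as the paper: both invoke the Davidson--Pitts bijection \cite[Theorem 2.1]{davidson1998alg} to produce an ideal $\fa$ with $[\fa\fF^2_d]=\fK^\perp$, observe that $\fa\subset\fb$ since elements of $\fa$ annihilate the compression, and use co-invariance of $\fK$ for $\fL_d$ to get $[\fb\fF^2_d]\subset\fK^\perp$. The only difference is organizational---the paper phrases things as ``existence then uniqueness'' and concludes $\fa=\fb$ via the double inclusion $\fa\subset\fb\subset\fa$, whereas you first establish $[\fb\fF^2_d]=\fK^\perp$ and then read off uniqueness from injectivity of the Davidson--Pitts map.
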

\begin{proof}
It follows from \cite[Theorem 2.1]{davidson1998alg} that there is a weak-$*$ closed two-sided ideal $\fc\subset \fL_d$ such that $\fF^2_d\ominus \fK=[\fc \fF^2_d]$, whence $\fK=\fF_\fc$. It only remains to show uniqueness. Assume that  $\fa\subset \fL_d$ is a weak-$*$ closed two-sided ideal such that $\fF^2_d\ominus \fK=[\fa \fF^2_d]$. Since $\fb$ was chosen to be the annihilator of $(P_{\fK}L_1|_{\fK},\ldots,P_{\fK}L_1|_{\fK})$, we see that $P_{\fK}\theta|_{\fK}=0$ for every $\theta\in \fb$. Using that $\fK$ is co-invariant for $\fL_d$ we obtain  $\theta\fF^2_d\subset \fF^2_d\ominus \fK$ for every $\theta\in \fb$. We conclude that 
\[
[\fb \fF^2_d]\subset \fF^2_d\ominus \fK=[\fa \fF^2_d].
\]
An application of \cite[Theorem 2.1]{davidson1998alg} yields $\fb\subset \fa$. On the other hand, it is immediate that $\fa$ annihilates $(P_{\fK}L_1|_{\fK},\ldots,P_{\fK}L_d|_{\fK})$, whence $\fa\subset \fb$. 
\end{proof}

We mention here that the condition that $\fK$ be co-invariant for $\fL_d'$ in the previous lemma cannot simply removed. Indeed, by  \cite[Theorem 3.7]{kribs2001Ld} there is a proper closed subspace $\fK\subset \fF^2_d$ which is co-invariant for $\fL_d$ and which satisfies
\[
\{\theta\in \fL_d:\theta \Omega\subset \fK^\perp\}=\{0\}
\]
so in particular we see that 
\[
\{\theta\in \fL_d:\theta \fF^2_d\subset \fK^\perp\}=\{0\}.
\]
Thus
\[
\Ann_{\nc}(P_{\fK}L_1|_{\fK},\ldots,P_{\fK}L_1|_{\fK})=\{0\},
\]
yet $\fK\neq \fF^2_d$.

We now examine the commuting case. Let $\J\subset \M_d$ be a weak-$*$ closed two-sided ideal and let
\[
H_\J=H^2_d\ominus [\J H^2_d]
\]
In this setting, the analogue of Lemma \ref{L:anncoinv} holds without additional conditions on the co-invariant subspace.

\begin{lemma}\label{L:anncoinvcomm}
Let $\fK\subset H^2_d$ be a closed subspace which is co-invariant for $\M_d$. Let $\J=\Ann_{\comm}(P_{\fK}M_{x_1}|_{\fK},\ldots,P_{\fK}M_{x_d}|_{\fK}).$ Then, $\J$ is the unique weak-$*$ closed ideal  of $\M_d$ with the property that $\fK=H_\J$.
\end{lemma}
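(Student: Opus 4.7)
The plan is to transfer the statement to the non-commuting setting and invoke Lemma \ref{L:anncoinv}, via the unitary $U:H^2_d\to\fF^s_d\subset \fF^2_d$ of Theorem \ref{T:quotientcomm}(1) and the isomorphism $\Psi\circ q_w:\fL_d/\fW_d\to\M_d$ of Theorem \ref{T:quotientcomm}(2). Setting $\td\fK=U\fK\subset\fF^s_d$, the first task would be to verify that $\td\fK$ is co-invariant for $\fL_d+\fL_d'$. Since $\fF^s_d$ is co-invariant for $\fL_d$ and $UM_{x_k}U^*=P_{\fF^s_d}L_k|_{\fF^s_d}$, the assumed $\M_d$-co-invariance of $\fK$ would lift at once to $\fL_d$-co-invariance of $\td\fK$. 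Co-invariance for $\fL_d'$ is the delicate point: I would exploit the fact that $\fF^s_d$ is also co-invariant for $\fL_d'$ and that the compression $P_{\fF^s_d}\fL_d'|_{\fF^s_d}$ lies in the commutant of $\{M_{x_1},\dots,M_{x_d}\}$, which equals $\M_d$; the $\M_d$-co-invariance of $\fK$ can then be used to conclude that $\rho^*\td\fK\subset\td\fK$ for every $\rho\in\fL_d'$.

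With that in hand, Lemma \ref{L:anncoinv} would produce a unique weak-$*$ closed two-sided ideal $\fb\subset\fL_d$ satisfying $\td\fK=\fF_\fb$ and
\[
\fb=\Ann_{\nc}(P_{\td\fK}L_1|_{\td\fK},\ldots,P_{\td\fK}L_d|_{\td\fK}).
\]
I would next show $\fW_d\subset\fb$: since $\td\fK\subset\fF^s_d=\fF_{\fW_d}$, we have $[\fW_d\fF^2_d]\subset[\fb\fF^2_d]$, and \cite[Theorem 2.1]{davidson1998alg} then forces $\fW_d\subset\fb$. Hence $\J:=(\Psi\circ q_w)(\fb)$ is a well-defined weak-$*$ closed ideal of $\M_d$, and using Corollary \ref{C:functcalccomm} together with identity \eqref{Eq:ann} I would identify $\J$ with the candidate $\Ann_{\comm}(P_\fK M_{x_1}|_\fK,\dots,P_\fK M_{x_d}|_\fK)$ from the statement.

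The remaining task is to identify $H_\J$ with $\fK$. The containment $\fW_d\subset\fb$ gives $\fF^2_d\ominus\fF^s_d\subset[\fb\fF^2_d]$, whence $\fF_\fb\subset\fF^s_d$. A short calculation using the $\fL_d$-co-invariance of $\fF^s_d$ would yield $P_{\fF^s_d}\theta u=0$ for every $\theta\in\fL_d$ and $u\in\fF^2_d\ominus\fF^s_d$, so that $P_{\fF^s_d}[\fb\fF^2_d]=P_{\fF^s_d}[\fb\fF^s_d]$. Under $U$, and using that $\Psi(\theta+\fW_d)$ acts on $H^2_d$ as the $U$-conjugate of $P_{\fF^s_d}\theta|_{\fF^s_d}$, the latter subspace corresponds to $[\J H^2_d]$. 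Combining these observations, $UH_\J=\fF^s_d\ominus P_{\fF^s_d}[\fb\fF^2_d]=\fF^2_d\ominus[\fb\fF^2_d]=\fF_\fb=\td\fK=U\fK$, so $H_\J=\fK$. For uniqueness, any other weak-$*$ closed ideal $\J'\subset\M_d$ with $H_{\J'}=\fK$ would pull back via $(\Psi\circ q_w)^{-1}$ to a weak-$*$ closed two-sided ideal $\fb'\supset\fW_d$ of $\fL_d$ with $\fF_{\fb'}=\td\fK$, and the uniqueness clause in Lemma \ref{L:anncoinv} would force $\fb'=\fb$, hence $\J'=\J$.

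The main obstacle I anticipate is the verification that $\td\fK$ is co-invariant for $\fL_d'$: this hypothesis of Lemma \ref{L:anncoinv} is not inherited directly from the $\M_d$-co-invariance of $\fK$ and relies on the specific structural interplay between the left and right creation operators on the symmetric Fock space.
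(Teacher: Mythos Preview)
Your argument is correct, but the route is considerably more circuitous than the one in the paper. The paper proves the lemma in a single stroke: it simply repeats the proof of Lemma~\ref{L:anncoinv} verbatim, working entirely inside $H^2_d$ and $\M_d$, and replaces the invocation of \cite[Theorem~2.1]{davidson1998alg} by \cite[Theorem~2.4]{DRS2015}, which gives directly the bijection between weak-$*$ closed ideals of $\M_d$ and $\M_d$-invariant subspaces of $H^2_d$. No transfer to the full Fock space, no $\fL_d'$-co-invariance check, and no unitary $U$ are needed.

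Your approach, by contrast, \emph{derives} the commutative statement from the non-commutative one already proved. This is a legitimate alternative and has the modest advantage of avoiding a second external citation, but it costs real work: you must verify that $\td\fK$ is co-invariant for $\fL_d'$ (which, as you note, goes through the fact that the compression of $\fL_d'$ to $\fF^s_d$ lands in the commutant of $M_x$, namely $\M_d$), and you must carry out the subspace identifications that are essentially the content of Lemma~\ref{L:uecomm}. All of these steps are sound as sketched, but the paper's direct argument is both shorter and conceptually cleaner.
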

\begin{proof}
Repeat the argument used in the proof of Lemma \ref{L:anncoinv}, invoking  \cite[Theorem 2.4]{DRS2015} rather than  \cite[Theorem 2.1]{davidson1998alg}.
\end{proof}

Define a pure commuting row contraction as
\[
M_\J=(P_{H_\J}M_{x_1}|_{H_\J},\ldots,P_{H_\J}M_{x_d}|_{H_\J}).
\]
We record an important property of $M_\J$. 

\begin{lemma}\label{L:uecomm}
Let $\J\subset \M_d$ be a weak-$*$ closed ideal. Let 
\[
\fb=(\Psi\circ q_w)^{-1}(\J)\subset \fL_d.
\]
Then, $M_\J$ is unitarily equivalent to $L_\fb$ and $\Ann_{\comm}(M_\J)=\J.$
\end{lemma}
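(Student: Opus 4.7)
The plan is to construct an explicit unitary from $\fF_\fb$ to $H_\J$ that intertwines the row contractions $L_\fb$ and $M_\J$, and then to read off the annihilator statement from Lemma \ref{L:anncoinv}. A first observation is that $\fW_d\subset \fb$ (since $q_w(\fW_d)=0$), so $[\fb\fF^2_d]\supset[\fW_d\fF^2_d]=(\fF^s_d)^\perp$. Here the last equality is the standard identification of the symmetric Fock space, which follows from Theorem \ref{T:quotientcomm}(1)--(2) and Lemma \ref{L:anncoinv} applied to $\fK=\fF^s_d$: the compression $P_{\fF^s_d}L_k|_{\fF^s_d}$ is unitarily equivalent to $M_{x_k}$, and because $\Psi$ is an isomorphism, the non-commutative annihilator of $M_x$ is exactly $\fW_d$. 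In particular $\fF_\fb\subset\fF^s_d$, and one obtains the orthogonal decomposition
\[
[\fb\fF^2_d]=(\fF^s_d)^\perp\oplus\bigl([\fb\fF^2_d]\cap\fF^s_d\bigr),\qquad \fF_\fb=\fF^s_d\ominus\bigl([\fb\fF^2_d]\cap\fF^s_d\bigr).
\]

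Next I would transport $[\fb\fF^2_d]\cap\fF^s_d$ through $U$. Since $\fF^s_d$ is co-invariant for $\fL_d$, the compression $\theta\mapsto P_{\fF^s_d}\theta|_{\fF^s_d}$ is a unital weak-$*$ continuous homomorphism of $\fL_d$ into $B(\fF^s_d)$ that agrees with $\theta\mapsto UM_{\Psi(\theta+\fW_d)}U^*$ on the generators $L_1,\dots,L_d$ by Theorem \ref{T:quotientcomm}(1); multiplicativity and weak-$*$ continuity then extend the equality to all of $\fL_d$. Thus, for $\theta\in\fb$ and $v\in\fF^2_d$,
\[
P_{\fF^s_d}(\theta v)=P_{\fF^s_d}\theta P_{\fF^s_d}v=UM_{\Psi(\theta+\fW_d)}U^*P_{\fF^s_d}v,
\]
where $\Psi(\theta+\fW_d)\in\J$ by the definition of $\fb$. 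Taking closed linear spans over all such $\theta$ and $v$, and using that every $\phi\in\J$ lifts to some $\theta\in\fb$ via the surjectivity of $\Psi\circ q_w$, one obtains $U^*\bigl([\fb\fF^2_d]\cap\fF^s_d\bigr)=[\J H^2_d]$, and hence $U^*\fF_\fb=H_\J$. A direct calculation using $\fF_\fb\subset\fF^s_d$ and Theorem \ref{T:quotientcomm}(1) then shows that the restricted unitary $W=U^*|_{\fF_\fb}\colon\fF_\fb\to H_\J$ satisfies $WL_\fb W^*=M_\J$.

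For the annihilator statement, I would first note that $[\fb\fF^2_d]$ is invariant for $\fL_d$ (since $\fb$ is a left ideal) and for $\fL_d'$ (since $\fL_d'$ commutes with $\fL_d\supset \fb$), so $\fF_\fb$ is co-invariant for $\fL_d+\fL_d'$. Lemma \ref{L:anncoinv} then gives $\Ann_{\nc}(L_\fb)=\fb$. The unitary equivalence just established transfers this to $\Ann_{\nc}(M_\J)=\fb=(\Psi\circ q_w)^{-1}(\J)$, and applying $\Psi\circ q_w$ together with equation (\ref{Eq:ann}) yields $\Ann_{\comm}(M_\J)=\J$. The main technical subtlety lies in the second paragraph: one must promote the generator-level formula of Theorem \ref{T:quotientcomm}(1) to arbitrary elements of $\fL_d$ so that the passage from the ideal $\fb$ to the ideal $\J$ commutes with forming closed spans. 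This relies essentially on the multiplicativity of compression to a co-invariant subspace and the weak-$*$ continuity of $\Psi$, and it is the only non-formal ingredient in the argument.
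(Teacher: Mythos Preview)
Your proof is correct and follows essentially the same route as the paper: both arguments use the unitary $U$ of Theorem \ref{T:quotientcomm}, observe that $\fW_d\subset\fb$ forces $(\fF^s_d)^\perp\subset[\fb\fF^2_d]$, and then match $[\fb\fF^2_d]\cap\fF^s_d$ with $U[\J H^2_d]$ via the compression formula $P_{\fF^s_d}\theta|_{\fF^s_d}=UM_{\Psi(q_w(\theta))}U^*$. The only cosmetic differences are that the paper cites \cite[Proposition 2.4]{davidson1998alg} for $(\fF^s_d)^\perp=[\fW_d\fF^2_d]$ (you rederive it from Lemma \ref{L:anncoinv}), and the paper obtains $\Ann_{\comm}(M_\J)=\J$ in one line from Lemma \ref{L:anncoinvcomm} rather than routing through $\Ann_{\nc}(L_\fb)=\fb$ and equation \eqref{Eq:ann} as you do.
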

\begin{proof}
By virtue of Lemma \ref{L:anncoinvcomm} we see that $\Ann_{\comm}(M_\J)=\J.$ Next, it follows from Theorem \ref{T:quotientcomm} that
\[
U\J U^*=\{ P_{\fF^s_d}X|_{\fF^s_d}: (\Psi\circ q_w)(X)\in \J\}=\{ P_{\fF^s_d}X|_{\fF^s_d}: X\in \fb\}
\]
whence 
\[
U[\J H^2_d]=\ol{\spn\{P_{\fF^s_d}Xv:X\in \fb, v\in \fF^s_d\}}=P_{\fF^s_d}[\fb \fF^2_d].
\]
Clearly, we see that $\fW_d\subset \fb$. Invoking \cite[Proposition 2.4]{davidson1998alg}, we find
\[
(\fF^s_d)^\perp=[\fW_d \fF^2_d]\subset [\fb \fF^2_d]
\] 
and thus
\[
 [\fb \fF^2_d]=U[\J H^2_d] \oplus (\fF^s_d)^\perp.
\]
In particular, we find
\[
\fF_\fb= [\fb \fF^2_d]^\perp=\fF^s_d\ominus U[\J H^2_d]=U(H^2_d\ominus [\J H^2_d])=UH_\J.
\]
We conclude that $M_\J$ is unitarily equivalent to $L_{\fb}$ via the operator $U$. 
\end{proof}

\subsection{Quasi-affine transforms}\label{SS:transforms}

Let $T=(T_1,\ldots,T_d)$ and $T'=(T'_1,\ldots,T'_d)$  be two $d$-tuples of operators acting on Hilbert spaces $\fH$ and $\fH'$ respectively. We say $T'$ is a \emph{quasi-affine transform} of $T$ and write $T'\prec T$, if there exists an injective bounded linear operator $X:\fH'\to \fH$ with dense range such that $XT_k'=T_kX$ for $1\leq k\leq d$. The relationship $T'\prec T$ is a common weakening of similarity that we will play a prominent role in our work. 

\begin{lemma}\label{L:denserangeinj}
Let $\fH$ and $\fH'$ be Hilbert spaces and let $X:\fH'\to\fH$ be a bounded linear operator.  Let $T\in B(\fH)$ and $T'\in B(\fH')$ such that $XT'=TX$. Then, the subspace $\fH'\ominus \ker X$ is co-invariant for $T'$ and
\[
XP_{\fH'\ominus \ker X}T'|_{\fH'\ominus \ker X}=TX|_{\fH'\ominus \ker X}.
\]
In particular, if $X$ has dense range, then 
\[
P_{\fH'\ominus \ker X}T'|_{\fH'\ominus \ker X}\prec T.
\]
\end{lemma}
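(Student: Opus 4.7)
The plan is to first verify that $\ker X$ is invariant under $T'$, which is immediate from the intertwining: if $v\in\ker X$, then $XT'v = TXv = 0$, so $T'v\in\ker X$. This gives that $\fH'\ominus\ker X$ is co-invariant for $T'$, establishing the first claim.

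Next, for the compression identity, I would take $u\in \fH'\ominus\ker X$ and write $T'u = P_{\fH'\ominus\ker X}T'u + z$ with $z\in\ker X$. Applying $X$ kills the $z$-component, so
\[
TXu = XT'u = X\bigl(P_{\fH'\ominus\ker X}T'u\bigr) = XP_{\fH'\ominus\ker X}T'|_{\fH'\ominus\ker X}\,u,
\]
which is the desired formula.

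Finally, to obtain the quasi-affine transform statement, I would consider the restriction $\widetilde X = X|_{\fH'\ominus\ker X}\colon \fH'\ominus\ker X\to\fH$. This is injective since its kernel is $\ker X\cap(\fH'\ominus\ker X) = \{0\}$. Because $X$ vanishes on $\ker X$, we have $\ran \widetilde X = \ran X$, so the density of $\ran X$ passes to $\widetilde X$. Combining with the intertwining identity from the previous step gives $\widetilde X \cdot P_{\fH'\ominus\ker X}T'|_{\fH'\ominus\ker X} = T\widetilde X$, which is exactly the definition of $P_{\fH'\ominus\ker X}T'|_{\fH'\ominus\ker X}\prec T$.

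There is no real obstacle here; the lemma is essentially a bookkeeping statement that extracts the canonical quasi-affinity embedded in any intertwining operator. The only point requiring minor care is recognizing that the co-invariance of $\fH'\ominus\ker X$ is precisely what allows $P_{\fH'\ominus\ker X}T'|_{\fH'\ominus\ker X}$ to be a legitimate operator (rather than merely a compression) and that the intertwining identity survives the projection because the projected-away piece lies in $\ker X$.
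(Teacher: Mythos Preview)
Your proof is correct and follows essentially the same approach as the paper: show $\ker X$ is $T'$-invariant from the intertwining, deduce the compression identity by noting that $X$ annihilates the $\ker X$-component of $T'u$, and then observe that $X|_{\fH'\ominus\ker X}$ is injective with the same range as $X$. The only difference is that you spell out the decomposition $T'u = P_{\fH'\ominus\ker X}T'u + z$ explicitly, whereas the paper writes the resulting operator identity $XP_{\fH'\ominus\ker X}T'|_{\fH'\ominus\ker X}=XT'|_{\fH'\ominus\ker X}$ directly.
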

\begin{proof}
Note that 
\[
XT' \ker X=TX\ker X=\{0\}
\]
whence $T'\ker X\subset \ker X$. We conclude that $\fH'\ominus \ker X$ is co-invariant for $T'$. Furthermore, we have
\begin{align*}
XP_{\fH'\ominus \ker X}T'|_{\fH'\ominus \ker X}&=XT'|_{\fH'\ominus \ker X}=TX|_{\fH'\ominus \ker X}.
\end{align*}
Finally, since $X|_{\fH'\ominus \ker X}:\fH'\ominus \ker X\to \fH$ is injective and has the same range as $X$, we conclude that 
\[
P_{\fH'\ominus \ker X}T'|_{\fH'\ominus \ker X}\prec T
\]
whenever $X$ has dense range.
\end{proof}

Annihilators are well behaved under quasi-affine transforms, and we record the following easy fact.

\begin{lemma}\label{L:anntransf}
Let $T=(T_1,\ldots,T_d)$ and $T'=(T'_1,\ldots,T'_d)$ be pure row contractions on some Hilbert spaces $\fH$ and $\fH'$.  Let $X:\fH'\to\fH$ be a bounded linear operator such that $XT'_k=T_kX$ for every $1\leq k\leq d$. Then, following statements hold.
\begin{enumerate}

\item[\rm{(1)}] If $X$ is injective, then $\Ann_{\nc}(T)\subset \Ann_{\nc}(T')$.

\item[\rm{(2)}] If $X$ has dense range, then $\Ann_{\nc}(T')\subset \Ann_{\nc}(T)$.

\item[\rm{(3)}] If $X$ is injective and has dense range, then $\Ann_{\nc}(T)=\Ann_{\nc}(T')$.

\end{enumerate}
\end{lemma}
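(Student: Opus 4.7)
The plan is to first establish the intertwining
\[
X\theta(T') = \theta(T)X \qquad \text{for every } \theta \in \fL_d,
\]
from which parts (1)--(3) follow immediately. Indeed, if $X$ is injective and $\theta \in \Ann_{\nc}(T)$, then $0 = \theta(T)X = X\theta(T')$ forces $\theta(T') = 0$, proving (1). Dually, if $X$ has dense range and $\theta \in \Ann_{\nc}(T')$, then $0 = X\theta(T') = \theta(T)X$ shows that $\theta(T)$ vanishes on $\ran X$, which is dense in $\fH$, so $\theta(T) = 0$, proving (2). Part (3) is just the conjunction of (1) and (2).

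To establish the intertwining, I would proceed in two steps. First, an induction on the length of words $w \in \bF^+_d$ starting from the hypothesis $XT'_k = T_k X$ yields $XT'_w = T_w X$ for every $w$, and hence $Xp(T') = p(T)X$ for every non-commutative polynomial $p$ in $L_1, \ldots, L_d$. Second, since $T$ and $T'$ are pure row contractions, Theorem \ref{T:functcalcnc} supplies weak-$*$ continuous functional calculi $\beta_T : \fL_d \to B(\fH)$ and $\beta_{T'} : \fL_d \to B(\fH')$. Consider the two maps
\[
\theta \longmapsto X\theta(T') \qquad \text{and} \qquad \theta \longmapsto \theta(T)X
\]
from $\fL_d$ into $B(\fH', \fH)$. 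Each is the composition of a weak-$*$ continuous functional calculus with left or right multiplication by the fixed bounded operator $X$; since left and right multiplication by a bounded operator are weak-$*$ continuous (by an elementary trace-duality argument on the preduals of the relevant operator spaces), both maps are weak-$*$ continuous. They coincide on non-commutative polynomials by the first step, and polynomials are weak-$*$ dense in $\fL_d$ by the very definition of the non-commutative analytic Toeplitz algebra. Therefore the two maps coincide on all of $\fL_d$.

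I do not expect any serious obstacle. The only mild subtlety is keeping track of which weak-$*$ topologies are in play on which operator spaces, but this is entirely standard. The result is the natural functional-calculus incarnation of the principle that a continuous intertwining relation between two representations extends from a dense generating set to the entire algebra, and it requires essentially nothing beyond Theorem \ref{T:functcalcnc}.
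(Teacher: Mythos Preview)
Your proposal is correct and matches the paper's approach exactly: the paper simply asserts that the lemma ``easily follows from the fact that for every $\theta\in\fL_d$ we have $X\theta(T')=\theta(T)X$,'' and you have spelled out the standard polynomial-plus-weak-$*$-density argument that justifies this intertwining relation.
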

\begin{proof}
This easily follows from the fact that for every $\theta\in \fL_d$ we have $X\theta(T')=\theta(T)X$. 
\end{proof}


\section{Transforming the standard $d$-shift}\label{S:Quasi}

In order to understand general pure row contractions, it is natural to turn to Theorem \ref{T:popescudilation}. This result reduces the problem  to the study of compressions of $d$-shifts to co-invariant subspaces. Such a strategy is appealing, as the non-commutative function theory of $\fL_d$ that was significantly developed in \cite{davidson1998alg},\cite{davidson1998},\cite{davidson1999} could then be leveraged, much in the way that the function theory of $H^\infty(\bD)$ has helped single operator theory blossom \cite{nagy2010}. Unfortunately, in general the $d$-shift from Theorem \ref{T:popescudilation}  is of high multiplicity, and thus the link with $\fL_d$ is obscured. In this section, to circumvent this difficulty we investigate the possibility for $d$-shifts of multiplicity one to be quasi-affine transforms of pure row contractions, taking inspiration from and generalizing a known result for single operators \cite[Theorem I.3.7]{bercovici1988}.

We first exhibit a necessary and sufficient condition in the form of the existence of certain cyclic vectors, the images of which satisfy some weak form of orthogonality. Recall that given vectors $\xi,\eta\in \fH$, we denote by $\xi\otimes \eta\in B(\fH)$ the rank-one operator defined as
	\[
		(\xi\otimes \eta)h=\langle h,\eta \rangle \xi, \quad h\in \fH.
	\]

\begin{theorem}\label{T:weakorth}
	Let $T=(T_1,\ldots,T_d)$ be a row contraction on some Hilbert space $\fH$ and let $C>0$.
	The following statements are equivalent.
\begin{enumerate}

	\item[\rm{(i)}] There is a cyclic vector $\xi\in \fH$ for $T$ that satisfies
	\[
		\sum_{w\in\bF_d^+} T_w \xi \otimes T_w \xi\leq C^2 I.
	\] 

	\item[\rm{(ii)}] There exists an operator $X:\fF_d^2\to \fH$ with dense range such that  $\|X\|\leq C$ and $XL_k=T_kX$ for $1\leq k\leq d$.

	\item[\rm{(iii)}] There exists a closed subspace $\fV\subset \fF^2_d$ which is co-invariant for $L$ and an injective operator $X:\fV\to \fH$ with dense range such that  $\|X\|\leq C$ and 
	\[
	XP_{\fV}L_k|_{\fV}=T_kX
	\]
	for $1\leq k\leq d$.

\end{enumerate}
\end{theorem}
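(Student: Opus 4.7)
I plan to prove (i)$\Leftrightarrow$(ii) directly, and (ii)$\Leftrightarrow$(iii) as a straightforward consequence of Lemma \ref{L:denserangeinj} combined with an extension-by-zero argument. The substantive step is (i)$\Leftrightarrow$(ii); both (ii)$\Leftrightarrow$(iii) directions will be essentially cosmetic.

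For (i)$\Rightarrow$(ii), testing the operator inequality $\sum_{w\in\bF^+_d} T_w\xi\otimes T_w\xi\leq C^2 I$ against $h\in\fH$ gives the scalar bound $\sum_w|\langle h,T_w\xi\rangle|^2\leq C^2\|h\|^2$. This says the map $Y:\fH\to\fF^2_d$ with $Yh=\sum_w\langle h,T_w\xi\rangle e_w$ is well-defined and bounded of norm at most $C$. I then set $X=Y^*$. By construction $Xe_w=T_w\xi$ (in particular $X\Omega=\xi$), and the relations $L_ke_w=e_{kw}$ and $T_kT_w=T_{kw}$ give $XL_k=T_kX$ on the basis and hence everywhere, while cyclicity of $\xi$ forces $\ran X$ to be dense.

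For (ii)$\Rightarrow$(i), take $\xi=X\Omega$. A short induction on word length, using $XL_k=T_kX$ together with $e_w=L_w\Omega$, yields $Xe_w=T_w\xi$, so that $\ran X=\ol{\spn\{T_w\xi:w\in\bF^+_d\}}$ and density of range forces cyclicity of $\xi$. Reversing the adjoint identification of the previous step, for every $h\in\fH$ one has $\langle(\sum_w T_w\xi\otimes T_w\xi)h,h\rangle=\sum_w|\langle h,T_w\xi\rangle|^2=\|X^*h\|^2\leq C^2\|h\|^2$, which is the desired operator inequality.

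For (ii)$\Rightarrow$(iii), set $\fV=\fF^2_d\ominus\ker X$; Lemma \ref{L:denserangeinj} supplies co-invariance of $\fV$ for each $L_k$, injectivity of $X|_\fV$, preservation of dense range, and the required intertwining identity, while $\|X|_\fV\|\leq\|X\|\leq C$. Conversely, for (iii)$\Rightarrow$(ii), extend $X$ to all of $\fF^2_d$ by zero on $\fV^\perp$; co-invariance of $\fV$ for each $L_k$ is equivalent to $L_k\fV^\perp\subset\fV^\perp$, so the extension intertwines $L_k$ with $T_k$, retains the norm bound, and still has dense range. I do not anticipate any serious obstacle: the only conceptual step is to recognize the operator inequality in (i) as boundedness of the map $h\mapsto(\langle h,T_w\xi\rangle)_w$ valued in $\ell^2(\bF^+_d)\cong\fF^2_d$, after which the argument is bookkeeping driven by the universal property of the basis $\{e_w=L_w\Omega\}_{w\in\bF^+_d}$ and Lemma \ref{L:denserangeinj}.
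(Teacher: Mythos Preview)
Your proposal is correct and essentially matches the paper's proof. The only organizational difference is that the paper runs the cycle (iii)$\Rightarrow$(i)$\Rightarrow$(ii)$\Rightarrow$(iii) (computing $\xi=XP_{\fV}\Omega$ directly from (iii)), whereas you prove (i)$\Leftrightarrow$(ii) and (ii)$\Leftrightarrow$(iii) separately, adding the easy extension-by-zero step for (iii)$\Rightarrow$(ii); the underlying computations with the orthonormal basis $\{e_w=L_w\Omega\}$ and the invocation of Lemma~\ref{L:denserangeinj} are identical.
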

\begin{proof}
	Assume first that there exists a closed subspace $\fV\subset \fF^2_d$ which is co-invariant for $L$ and an injective operator $X:\fV\to \fH$ with dense range such that  $\|X\|\leq C$ and 
	\[
	XP_{\fV}L_k|_{\fV}=T_kX
	\]
	for $k=1,\dots,d$. Let $\xi=XP_\fV \Omega$. Since $\Omega$ is obviously cyclic for $L$ and $\fV$ is co-invariant, it is readily seen that $P_\fV \Omega$ is cyclic for $(P_{\fV}L_1|_{\fV},\ldots, P_{\fV}L_d|_{\fV})$.
	For a word $w\in \bF^+_d$, we compute
	\begin{align*}
		T_w \xi &=T_w XP_{\fV}\Omega = XP_{\fV}L_w P_{\fV}\Omega
	\end{align*}
	and thus conclude that $\xi$  is cyclic for $T$, since $X$ has dense range. In addition, for $w\in \bF_d^+$ and $h\in \fH$ we see that
	\begin{align*}
		\langle X^*h,  L_w\Omega\rangle&=\langle X^*h, P_\fV L_w \Omega\rangle =\langle X^*h, P_\fV L_w P_{\fV} \Omega\rangle\\
		&= \langle h, XP_\fV L_w P_{\fV} \Omega\rangle=\langle h, T_w \xi\rangle,
	\end{align*}
	and since $\{L_w\Omega\}_{w\in \bF_d^+}$ is an orthonormal basis for $\fF_d^2$, we infer that
	\begin{align*}
		\|X^* h\|^2 &= \sum_{w\in \bF_d^+}|\langle X^*h, L_w \Omega\rangle|^2 = \sum_{w\in \bF_d^+}|\langle h, T_w \xi\rangle|^2\\
							  &= \sum_{w\in \bF_d^+}\langle (T_w \xi\otimes T_w\xi)h,h\rangle
	\end{align*}
	whence
	\begin{equation}\label{Eq:CycCond}
		\sum_{w\in\bF_d^+} T_w \xi \otimes T_w \xi \leq C^2I.
	\end{equation}
	This shows that (iii) implies (i).
	Assume next that there is a cyclic vector $\xi\in \fH$ for $T$ satisfying \eqref{Eq:CycCond}.
	Then, given $h\in \fH$, we see that
	\begin{align*}
		\sum_{w\in \bF_d^+}|\langle h, T_w \xi\rangle|^2 = \sum_{w\in \bF_d^+}\langle (T_w \xi\otimes T_w\xi)h,h\rangle\leq C^2 \|h\|^2.
	\end{align*}
	We conclude that the vector
	\[
		\sum_{w\in \bF_d^+}\langle h, T_w \xi\rangle L_w\Omega
	\]
	belongs to $\fF_d^2$ and has norm at most $C\|h\|$.
	Hence, we may define a bounded linear operator $Y:\fH\to \fF_d^2$ by
	\[
		Yh = \sum_{w\in \bF_d^+}\langle h, T_w \xi\rangle L_w\Omega, \quad h\in \fH.
	\]
	Plainly $\|Y\|\leq C$.
	Moreover, given $1\leq k\leq d$, a word $u\in \bF_d^+$, and a vector $h\in \fH$ we find
	\begin{align*}
		\langle YT^*_k h, L_u\Omega\rangle&=\left\langle\sum_{w\in \bF_d^+}\langle T_k^*h, T_w \xi\rangle L_w\Omega, L_u \Omega\right\rangle=\langle T_k^*h, T_u \xi\rangle\\
		&=\langle h,T_k T_u \xi\rangle
	\end{align*}
	and
	\begin{align*}
		\langle L^*_k Yh, L_u\Omega\rangle&=\langle Yh, L_kL_u\Omega\rangle=\left\langle\sum_{w\in \bF_d^+}\langle h, T_w \xi\rangle L_w\Omega, L_kL_u \Omega\right\rangle\\
		&=\langle h,T_k T_u\xi\rangle.
	\end{align*}
	We conclude that $YT^*_k=L^*_k Y$ for every $1\leq k\leq d$. Furthermore, we note that if $Yh=0$ then
	\[
		0=\langle Yh,L_w\Omega \rangle=\langle h,T_w\xi\rangle
	\]
	for every $w\in \bF_d^+$, which implies that $h=0$ since $\xi$ is assumed to be cyclic for $T$. Thus, $Y$ is injective. Choosing $X=Y^*$ shows that (i) implies (ii). 
	
	Finally, the fact that (ii) implies (iii) follows from Lemma \ref{L:denserangeinj}.
\end{proof}

It is now a trivial matter to extract the desired necessary and sufficient condition.

\begin{corollary}\label{C:multonetransform}
Let $T=(T_1,\ldots,T_d)$ be a row contraction on some Hilbert space $\fH$. Then, the following statements are equivalent.

\begin{enumerate}

	\item[\rm{(i)}] There is a cyclic vector $\xi\in \fH$ for $T$ such that the operator $\sum_{w\in\bF_d^+} T_w \xi \otimes T_w \xi$ is bounded.

	\item[\rm{(ii)}] There exists a closed subspace $\fV\subset \fF^2_d$ which is co-invariant for $L$ and  such that $P_{\fV}L|_{\fV}\prec T.$
\end{enumerate}
\end{corollary}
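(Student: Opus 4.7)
The plan is to derive the corollary as a direct consequence of Theorem \ref{T:weakorth}, by simply ``forgetting'' the explicit constant $C$ appearing there. The corollary's conditions (i) and (ii) are nothing more than the existential versions, over $C>0$, of (i) and (iii) in the theorem respectively.

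For the implication (i)$\Rightarrow$(ii) of the corollary, I would observe that the operator $\sum_{w\in\bF_d^+}T_w\xi\otimes T_w\xi$ is a positive operator on $\fH$ (being a sot-limit of sums of rank-one positive operators of the form $T_w\xi\otimes T_w\xi$). Saying it is bounded therefore means that its norm is some finite number, say $C^2$, giving the inequality
\[
\sum_{w\in\bF_d^+}T_w\xi\otimes T_w\xi\leq C^2 I.
\]
This is precisely condition (i) of Theorem \ref{T:weakorth} for that particular $C$. The implication (i)$\Rightarrow$(iii) of Theorem \ref{T:weakorth} then produces the co-invariant subspace $\fV\subset\fF^2_d$ and the injective operator $X:\fV\to\fH$ with dense range satisfying $XP_\fV L_k|_\fV=T_kX$ for every $1\leq k\leq d$. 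By definition of quasi-affine transform, this means exactly that $P_\fV L|_\fV\prec T$, yielding (ii) of the corollary.

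Conversely, for (ii)$\Rightarrow$(i) of the corollary, the assumption $P_\fV L|_\fV\prec T$ provides an injective bounded operator $X:\fV\to\fH$ with dense range such that $XP_\fV L_k|_\fV=T_kX$ for $1\leq k\leq d$. Setting $C=\|X\|$ puts us squarely inside condition (iii) of Theorem \ref{T:weakorth} for this $C$, from which (i) of Theorem \ref{T:weakorth} delivers a cyclic vector $\xi\in\fH$ for $T$ with $\sum_{w\in\bF_d^+}T_w\xi\otimes T_w\xi\leq C^2I$. In particular, this sum is bounded, which is (i) of the corollary.

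There is no real obstacle here, as all the substantive work has already been done in Theorem \ref{T:weakorth}; the corollary is essentially a repackaging that emphasises the qualitative content (boundedness of the sum; existence of a quasi-affine intertwiner from a co-invariant piece of $L$) by suppressing the quantitative control provided by the constant $C$.
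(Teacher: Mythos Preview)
Your proposal is correct and follows exactly the paper's approach: the paper simply states that the corollary ``follows immediately from Theorem \ref{T:weakorth},'' and your argument spells out precisely how the corollary's (i) and (ii) are the existential (over $C>0$) forms of the theorem's (i) and (iii).
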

\begin{proof}
This follows immediately from Theorem \ref{T:weakorth}.
\end{proof}

The property above that $\sum_{w\in\bF_d^+} T_w \xi \otimes T_w \xi$ be bounded can be seen as a relaxation of the condition that $\xi$ be a \emph{wandering} vector, in the sense that the set $\{T_w \xi\}_{w\in \bF^d_+}$ is orthogonal. It will turn out below that this weak orthogonality condition is automatically satisfied for pure row contractions that admit any cyclic vector (see Corollary \ref{C:cyclicweakorth}).

Given a $d$-tuple $T=(T_1,\ldots,T_d)$ on some Hilbert space $\fH$, we defined its \emph{multiplicity} to be the least cardinality $\mu_T$ of a cyclic set for $T$. We emphasize that in the special case of $d$-shifts, this definition of multiplicity agrees with the one we gave in Section \ref{S:prelim}. The following simple observation will be use implicitly.
Let $T'$ be a $d$-tuple of operators on some other Hilbert space $\fH'$ for which there exists an operator $X:\fH'\to \fH$ with dense range such that $XT'_k=T_kX$ for $k=1,\dots,d$. If $\Gamma\subset \fH'$ is cyclic for $T'$, then a straightforward verification reveals that $X\Gamma \subset \fH$ is cyclic for $T$. In particular, this shows that $\mu_T\leq \mu_{T'}$.

In preparation for our main result, we need a technical tool. 

\begin{lemma}\label{L:pureshiftmult}
	Let $T=(T_1,\ldots,T_d)$ be a pure row contraction on some Hilbert space $\fH$.
	Then, there is a $d$-shift $V=(V_1,\ldots,V_d)$ acting on some Hilbert space $\fK$  with multiplicity $\mu_T$ and a contractive operator $X:\fK\to \fH$ with dense range such that $T_kX=XV_k$ for $1\leq k\leq d$.
\end{lemma}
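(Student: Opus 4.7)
My plan is to invoke the Popescu dilation from Theorem~\ref{T:popescudilation} to produce \emph{some} $d$-shift that maps contractively onto $\fH$ with the correct intertwining, and then restrict that shift to the invariant subspace generated by a minimum cyclic set for $T$. This restriction is what will cut the multiplicity down from $\fd_T$, which is what the dilation hands over for free, to the desired value $\mu_T$.

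Concretely, Theorem~\ref{T:popescudilation} gives a Hilbert space $\fK_0\supset\fH$ and a $d$-shift $V_0=(V_{0,1},\ldots,V_{0,d})$ on $\fK_0$ with $T_k^*=V_{0,k}^*|_\fH$. The co-invariance of $\fH$ under $V_0$ translates into $P_\fH V_{0,k}=T_k P_\fH$, so the projection $P_\fH:\fK_0\to\fH$ is a contractive surjective intertwiner; its domain however carries multiplicity $\fd_T$, which can strictly exceed $\mu_T$. To remedy this I would pick a cyclic set $\Gamma\subset\fH$ for $T$ with $|\Gamma|=\mu_T$, view $\Gamma\subset\fK_0$, and form
\[
\fM=\ol{\spn}\{V_{0,w}\gamma:w\in\bF_d^+,\ \gamma\in\Gamma\}\subset\fK_0.
\]
I then set $V=(V_{0,1}|_\fM,\ldots,V_{0,d}|_\fM)$ and $X=P_\fH|_\fM$.

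What remains is to verify three properties: (a) $V$ is a $d$-shift; (b) $X:\fM\to\fH$ is contractive with dense range and satisfies $T_k X=XV_k$; (c) the multiplicity of $V$ equals $\mu_T$. Contractivity and the intertwining in (b) are immediate from the previous step, and the density of $X(\fM)$ follows from $X\gamma=\gamma$ for $\gamma\in\Gamma$ together with $T_w\gamma=XV_w\gamma\in X(\fM)$ and the cyclicity of $\Gamma$ for $T$. For (a), note that $V$ inherits from $V_0$ the structure of a row isometry with pairwise orthogonal ranges, and it is pure because the inequality $\|V_w^*\xi\|\leq\|V_{0,w}^*\xi\|$ passes the pureness of $V_0$ on to $V$. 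The multivariable Wold decomposition then identifies $V$ with $L\otimes I_\fE$ for a Hilbert space $\fE$. For (c), Theorem~\ref{T:shiftmult} combined with the cyclicity of $\Gamma$ for $V$ gives $\dim\fE\leq|\Gamma|=\mu_T$, while pushing a minimum cyclic set for $V$ through the dense-range intertwiner $X$ produces a cyclic set for $T$, yielding the reverse inequality $\mu_T\leq\dim\fE$.

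I do not anticipate a serious obstacle with this approach: it essentially bootstraps Popescu's dilation by exploiting that a pure row isometry with pairwise orthogonal ranges is a $d$-shift. The sole ingredient not made explicit in the preliminaries is this multivariable Wold decomposition, which is classical and can be cited from Popescu's foundational papers.
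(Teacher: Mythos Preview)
Your proposal is correct and follows essentially the same route as the paper: invoke Popescu's dilation, restrict the resulting $d$-shift to the invariant subspace generated by a minimal cyclic set $\Gamma$ for $T$, use the non-commutative Wold decomposition to certify the restriction as a $d$-shift, and take $X=P_{\fH}|_{\fM}$ as the intertwiner. The only cosmetic difference is that the paper verifies the restriction is a pure row contraction via the inequality $\sum_{|w|=n}V_wV_w^*\le P_{\fM}\bigl(\sum_{|w|=n}V_{0,w}V_{0,w}^*\bigr)P_{\fM}$ rather than directly observing that it inherits the row-isometry structure, but this is the same argument in slightly different packaging.
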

\begin{proof}
	By Theorem \ref{T:popescudilation}, there is a Hilbert space $\fK_0$ containing $\fH$ along with a $d$-shift $S=(S_1,\ldots,S_d)$ on $\fK_0$ with $\mu_S=\fd_T$ and such that
	\[
		T^*_k=S_k^*|{\fH}, \quad 1\leq k\leq d. 
	\]
By assumption, there is a subset $\Gamma\subset \fH$ of cardinality $\mu_T$ such that
	\[
		\fH=\ol{\spn\{ T_w \gamma: w\in \bF_d^+,\gamma\in \Gamma\}}.
	\]
Let 
	\[
		\fK=\ol{\spn\{ S_w \gamma: w\in \bF_d^+,\gamma\in \Gamma\}}
	\]
and put $V_k=S_k|_{\fK}$ for $k=1,\dots,d$. For each $n\in \bN$, we note that
	\[
		\sum_{|w|=n} V_w V_w^*=\sum_{|w|=n} P_{\fK}S_w P_{\fK}S^*_wP_{\fK}\leq P_{\fK}\left(\sum_{|w|=n} S_w S^*_w\right)P_{\fK}
	\]
which shows at once that $V$ is pure, and that $\sum_{k=1}^d V_kV_k^*\leq I$.
By the non-commutative Wold--von Neumann decomposition  \cite[Theorem 1.3]{popescu1989}, we infer that $V$ is a $d$-shift.

Next, define $X=P_{\fH}|_{\fK}:\fK\to \fH$.
For $k=1,\dots,d$, using that $\fH$ and $\fK$ are respectively co-invariant and invariant for $S$ we compute
\begin{align*}
	T_k X	&=	P_{\fH}S_kP_{\fH}|_{\fK} = P_{\fH}S_k|_{\fK}\\
				&=	P_{\fH}P_{\fK}S_k|_{\fK} = XS_k|_{\fK} = XV_k.
\end{align*}
Hence, for $w\in \bF^+_d$ and $\gamma\in \Gamma$ we find
\[
XS_w\gamma=XV_w\gamma=T_w X\gamma=T_w\gamma
\]
which implies $\ol{X\fK} =\fH$, so that $X$ has dense range. In particular, we must have that $\mu_T\leq \mu_V$. 
On the other hand, by definition of $\fK$ we see that $\Gamma$ is a cyclic set for $V$, so that $\mu_V\leq \card (\Gamma)=\mu_T$.  
\end{proof}

We now arrive at the last technical step before the main result of the section. Roughly speaking, it says that up to a quasi-affine transform, the multiplicity of a pure row contraction can be made to agree with its defect.

\begin{lemma}\label{L:qashift}
	Let $T=(T_1,\ldots,T_d)$ be a pure row contraction on some Hilbert space $\fH$.
	Then, there is a pure row contraction $T'=(T_1',\ldots,T_d')$ on some other Hilbert space $\fH'$ such that $\mu_{T'}=\fd_{T'}=\mu_T$, along with a contractive injective operator $X:\fH'\to \fH$ with dense range such that $T_kX=XT'_k$ for $1\leq k\leq d$.	If $T$ is commuting, then so is $T'$. 
\end{lemma}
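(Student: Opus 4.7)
The plan is to obtain $T'$ from Lemma \ref{L:pureshiftmult} by quotienting out the kernel of the intertwiner (which produces injectivity), and then to verify that both the multiplicity and the defect of the resulting compression coincide with $\mu_T$ via a direct rank calculation.

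Concretely, I apply Lemma \ref{L:pureshiftmult} to obtain a $d$-shift $V=(V_1,\ldots,V_d)$ on some Hilbert space $\fK$ of multiplicity $\mu_V=\mu_T$, together with a contractive $X_0:\fK\to\fH$ with dense range such that $T_kX_0=X_0V_k$. For a $d$-shift, the defect equals the multiplicity, so $\fd_V=\mu_T$. The intertwining forces $V_k\ker X_0\subset\ker X_0$, hence $\fH':=\fK\ominus\ker X_0$ is co-invariant for $V$. I set $T'_k=P_{\fH'}V_k|_{\fH'}$ and $X=X_0|_{\fH'}$. Since $X_0P_{\fH'}=X_0$, the operator $X$ is injective, contractive, has dense range, and satisfies $T_kX=XT'_k$; being a compression of the pure row contraction $V$ to a co-invariant subspace, $T'$ is itself pure.

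For the multiplicity equality $\mu_{T'}=\mu_T$, density and intertwining of $X$ send cyclic sets for $T'$ to cyclic sets for $T$, giving $\mu_T\leq\mu_{T'}$; conversely, co-invariance yields $T'_w=P_{\fH'}V_w|_{\fH'}$, so projecting any cyclic set for $V$ into $\fH'$ furnishes one for $T'$, whence $\mu_{T'}\leq\mu_V=\mu_T$. For the defect, co-invariance gives $T^{\prime*}_k\xi=V^*_k\xi$ whenever $\xi\in\fH'$, leading to
\[
I_{\fH'}-\sum_{k=1}^d T'_kT^{\prime*}_k \;=\; P_{\fH'}\bigl(I_\fK-\sum_{k=1}^d V_kV_k^*\bigr)\big|_{\fH'},
\]
so $\fd_{T'}\leq\fd_V=\mu_T$. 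The reverse inequality $\mu_{T'}\leq\fd_{T'}$ is a universal feature of pure row contractions: by Theorem \ref{T:popescudilation}, $T'$ dilates to a $d$-shift $W$ of multiplicity $\fd_{T'}$, which by Theorem \ref{T:shiftmult} satisfies $\mu_W=\fd_{T'}$, and the same cyclicity-under-compression argument yields $\mu_{T'}\leq\mu_W=\fd_{T'}$. Combining these, $\fd_{T'}=\mu_{T'}=\mu_T$.

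For the commuting case I simply rerun the proof of Lemma \ref{L:pureshiftmult} with Theorem \ref{T:MVdilation} replacing Theorem \ref{T:popescudilation}, which supplies a commuting $d$-shift in place of $V$; the compression $T'$ on the co-invariant subspace $\fH'$ then inherits commutativity automatically. I do not anticipate a genuine obstacle in any of these steps: the real content is the explicit defect identity, which supplies the rank bound needed to pin down $\fd_{T'}$ against the universal inequality $\mu_{T'}\leq\fd_{T'}$.
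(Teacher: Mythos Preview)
Your non-commuting argument is essentially identical to the paper's: the same construction (compress $V$ to $\fH'=\fK\ominus\ker X_0$), the same defect identity, and the same chain $\mu_T\leq\mu_{T'}\leq\fd_{T'}\leq\fd_V=\mu_T$. Your extra observation $\mu_{T'}\leq\mu_V$ is correct but redundant.

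The commuting case, however, has a genuine gap. Rerunning Lemma~\ref{L:pureshiftmult} with Theorem~\ref{T:MVdilation} in place of Theorem~\ref{T:popescudilation} does \emph{not} supply a commuting $d$-shift $V$. In that proof, the output $V$ is the restriction of the dilation $S$ to a cyclic invariant subspace $\fK$, and one then invokes the non-commutative Wold decomposition to conclude that $V$ is a $d$-shift. That step requires the $S_k$ to be isometries with orthogonal ranges, which is true for the non-commuting $L_k$ but false for the commuting $M_{x_k}$ when $d>1$. Worse, the restriction of a commuting $d$-shift to a cyclic invariant subspace need not have defect one; the paper explicitly exhibits (in the paragraph following this lemma) a cyclic invariant subspace $\fM\subset H^2_d$ for which $M_x|_\fM$ has infinite defect. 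So the inequality $\fd_{T'}\leq\fd_V=\mu_T$ would collapse, and with it the entire defect computation.

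The paper sidesteps this by keeping the non-commuting construction of $T'$ intact and deducing commutativity a posteriori: since $X$ is injective with dense range, Lemma~\ref{L:anntransf} gives $\Ann_{\nc}(T)=\Ann_{\nc}(T')$, and the commutator ideal lies in the former. Equivalently, you can argue directly that $X(T'_jT'_k-T'_kT'_j)=(T_jT_k-T_kT_j)X=0$ and use injectivity of $X$. Either route is a one-line fix; your proposed detour through the commuting dilation is precisely the obstruction the paper warns about.
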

\begin{proof}
	By Lemma \ref{L:pureshiftmult}, there a $d$-shift $V=(V_1,\ldots,V_d)$ with $\mu_V=\mu_T$ acting on some Hilbert space $\fK$ and a contractive operator $Y:\fK\to \fH$ with dense range such that $T_kY=YV_k$ for every $1\leq k\leq d$.  Put 
	\[
	\fH'=\fK\ominus \ker Y \qand	X=Y|_{\fH'}
	\]
	and for each $1\leq k\leq d$ let 
	\[
	T'_k=P_{\fH'}V_k|_{\fH'}.
	\]
	Clearly, $X$ is contractive, injective, and it has dense range. Moreover, it follows from Lemma \ref{L:denserangeinj} that $T_kX=XT'_k$ for $1\leq k\leq d$.  In particular
	\begin{equation}\label{E:ineq1}
	\mu_T\leq \mu_{T'}.
	\end{equation} 
	Since $\fH'$ is co-invariant subspace for $V$, we see that $T'^*_w=V^*_w|_{\fH'} $ for  every $w\in \bF^+_d$. It follows that $T'$ is a pure row contraction. 	Using that
	\begin{align*}
		I-\sum_{k=1}^d T_k'T_k^{'*} = P_{\fH'}\left(I-\sum_{k=1}^d V_kV_k^{*}\right)|_{\fH'} 
	\end{align*}
	we find $\fd_{T'}\leq \fd_{V}$. But $V$ is a $d$-shift, so $\fd_V=\mu_V$ by Theorem \ref{T:shiftmult} and we infer 
	\begin{equation}\label{E:ineq2}
	\fd_{T'}\leq \mu_T.
	\end{equation}  
	On the other hand, since $T'$ is pure we may apply Theorem \ref{T:popescudilation} to find a Hilbert space $\fK'$ containing $\fH'$ and a $d$-shift $S=(S_1,\ldots,S_d)$ with $\fd_{T'}=\mu_S$ and such that $T^{'*}_k=S^*_k|_{\fH'}$ for every $1\leq k\leq d.$ Choose a subset $\Gamma\subset \fK'$ with cardinality $\fd_{T'}$ that is cyclic for $S$.
	For $w\in \bF^+_d$ and $\gamma\in \Gamma$, we may use that $\fH'$ is co-invariant for $S$ to find
	\begin{align*}
		T'_wP_{\fH'}\gamma &= P_{\fH'} S_wP_{\fH'}\gamma = P_{\fH'} S_w\gamma.
	\end{align*}
	Consequently, we have that
	\begin{align*}
	\ol{\spn\{ T'_wP_{\fH'}\gamma:w\in \bF^+_d,\gamma\in \Gamma\}}&= P_{\fH'}\left(\ol{\spn\{ S_w\gamma:w\in \bF^+_d,\gamma\in \Gamma\}}\right)\\
	&=P_{\fH'}\fK'=\fH'
	\end{align*}
	so that $P_{\fH'}\Gamma$ is cyclic for $T'$. Hence, 
	\begin{equation}\label{E:ineq3}
	\mu_{T'}\leq \fd_{T'}.
	\end{equation}  
	Combining inequalities (\ref{E:ineq1}), (\ref{E:ineq2}) and  (\ref{E:ineq3}) yields  $\mu_{T'}=\fd_{T'}=\mu_T$ as desired.
	
	Finally, if $T$ is commuting, then $\Ann_{\nc}(T)$ contains the commutator ideal, so that $T'$ is also commuting by Lemma \ref{L:anntransf}.
	\end{proof}

We remark here that the preceding two proofs are faithful adaptations of the single-variable ones found in \cite[Lemma I.3.5 and Theorem I.3.7]{bercovici1988}. Interestingly however, this approach forces us through the non-commuting multivariate world, even if we start with a commuting row contraction. 

To see why, we note that there are examples of cyclic invariant subspaces for $M_x$ for which the restriction has \emph{infinite} defect. Take for instance the cyclic invariant subspace $\fM\subset H^2_d$ generated by $x_1$, so that
\[
\fM=\bigoplus_{\alpha\in \bN^d, \alpha_1\geq 1}  \bC x^\alpha.
\] 
Then $\fM$ clearly has infinite co-dimension, so we may invoke \cite[Theorem F]{arveson2000} to conclude that the restriction $M_x|_{\fM}$ has infinite defect. This underlines potential difficulties in establishing the equality $\fd_V=\mu_V$, which is used crucially in the proof of Lemma \ref{L:qashift}. Fortunately, with our approach this equality follows from Theorem \ref{T:shiftmult} since $V$ is known to be a $d$-shift, being the restriction of a $d$-shift to an invariant subspace.  For commuting $d$-shifts, the behaviour of such restrictions is more subtle. Assume for instance that the restriction of the standard commuting $d$-shift to some cyclic proper invariant subspace is another commuting $d$-shift. Since it must be cyclic, this second commuting $d$-shift must be of multiplicity one, and thus unitarily equivalent to the standard commuting $d$-shift. This is however impossible, by \cite[Corollary 5.5]{GHX04}.  Although it is plausible that a completely different approach could circumvent these issues in the commuting setting, the preceding remarks show that our current proofs have an inexorable non-commutative aspect built into them.

The following is one of our main results.

\begin{theorem}\label{T:cyclicpure}
	Let $T=(T_1,\ldots,T_d)$ be a pure cyclic row contraction on some Hilbert space $\fH$.
	Then there is a contractive operator $X:\fF_d^2\to \fH$ with dense range such that $XL_k=T_kX$ for $k=1,\dots,d$.
	
	If $T$ is commuting, then there is a contractive operator $X':H^2_d\to\fH$ with dense range such that $X'M_{x_k}=T_kX'$ for $k=1,\dots,d$.
\end{theorem}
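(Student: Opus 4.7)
\medskip

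\textbf{Plan.} The idea is that cyclicity of $T$ means $\mu_T=1$, and Lemma \ref{L:qashift} then lets us pass to a quasi-affine pre-image $T'$ whose defect also equals one. A pure row contraction of defect one dilates to a $d$-shift of multiplicity one, which is $L$ on $\fF^2_d$ (respectively $M_x$ on $H^2_d$ in the commuting case). Composing the dilation with the intertwining from Lemma \ref{L:qashift} will then produce the desired operator $X$.

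\medskip

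\textbf{Non-commuting assertion.} Since $T$ is cyclic, $\mu_T=1$. Lemma \ref{L:qashift} produces a pure row contraction $T'=(T'_1,\ldots,T'_d)$ on some Hilbert space $\fH'$ with $\mu_{T'}=\fd_{T'}=1$, together with a contractive injection $Y:\fH'\to \fH$ of dense range satisfying $Y T'_k=T_kY$ for each $k$. Apply Theorem \ref{T:popescudilation} to $T'$: there is a Hilbert space $\fK\supset \fH'$ and a $d$-shift $V$ on $\fK$ of multiplicity $\fd_{T'}=1$ with $T'^*_k=V_k^*|_{\fH'}$. A $d$-shift of multiplicity one is unitarily equivalent to $L$ on $\fF^2_d$, so up to this identification we may regard $\fH'$ as a closed co-invariant subspace of $\fF^2_d$ with $T'_k=P_{\fH'}L_k|_{\fH'}$. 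Now set $X:=Y P_{\fH'}:\fF^2_d\to \fH$. Then $\|X\|\leq 1$, and $X\fF^2_d = Y\fH'$ is dense in $\fH$. Finally, co-invariance of $\fH'$ under each $L_k$ gives $P_{\fH'}L_k=P_{\fH'}L_kP_{\fH'}$, so that
\[
X L_k = Y P_{\fH'}L_k = Y P_{\fH'}L_k P_{\fH'} = Y T'_k P_{\fH'} = T_k Y P_{\fH'} = T_k X,
\]
as required.

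\medskip

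\textbf{Commuting assertion.} Assume $T$ is commuting. Lemma \ref{L:qashift} additionally guarantees that the auxiliary $T'$ is commuting, and Theorem \ref{T:MVdilation} is the precise commuting analogue of Theorem \ref{T:popescudilation}: it realizes $T'$ as the compression of a commuting $d$-shift of multiplicity $\fd_{T'}=1$, which is unitarily equivalent to $M_x$ on $H^2_d$. Thus $\fH'$ embeds as a closed $M_x$-co-invariant subspace of $H^2_d$ with $T'_k=P_{\fH'}M_{x_k}|_{\fH'}$, and the operator $X':=Y P_{\fH'}:H^2_d\to \fH$ is a contraction with dense range satisfying $X'M_{x_k}=T_kX'$ by the same calculation as above.

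\medskip

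\textbf{Main obstacle.} The substantive work is entirely absorbed into Lemma \ref{L:qashift} and the dilation theorems of Popescu and Arveson; the verification above is then essentially formal. The only conceptually subtle point is the one already emphasized in the remarks preceding the theorem: even in the commuting case, the reduction step of Lemma \ref{L:qashift} is most naturally argued by passing through the non-commutative multivariate world. This gives the argument its slightly indirect character, but it does not affect the straightforward composition that produces $X$ and $X'$.
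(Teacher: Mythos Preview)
Your non-commuting argument is essentially identical to the paper's: invoke Lemma \ref{L:qashift} to obtain $T'$ with $\fd_{T'}=1$, realize $T'$ via Theorem \ref{T:popescudilation} as a compression of $L$ to a co-invariant subspace $\fH'\subset\fF^2_d$, and set $X=YP_{\fH'}$.

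For the commuting assertion your route diverges slightly from the paper's, and both work. You use the final clause of Lemma \ref{L:qashift} (that $T'$ is commuting when $T$ is) and then apply the commuting dilation Theorem \ref{T:MVdilation} to realize $T'$ directly inside $H^2_d$, defining $X'=YP_{\fH'}$ there. The paper instead keeps the already-built $X:\fF^2_d\to\fH$ and observes that commutativity of $T$ forces $X(L_jL_k-L_kL_j)=0$, whence $(\fF^s_d)^\perp\subset\ker X$ by \cite[Proposition 2.4]{davidson1998alg}; thus $X=XP_{\fF^s_d}$, and composing with the unitary $U:H^2_d\to\fF^s_d$ of Theorem \ref{T:quotientcomm} gives $X'=XU$. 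Your approach is a touch more symmetric and avoids the kernel computation, while the paper's makes explicit that the commuting operator $X'$ is literally the restriction of the non-commuting $X$ to the symmetric Fock space---reinforcing the theme that the commutative result is recovered from the non-commutative one by passing to a quotient.
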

\begin{proof}
	By Lemma \ref{L:qashift}, we see that there is a pure row contraction $T'=(T_1',\ldots,T_d')$ on some other Hilbert space $\fH'$ such that $\fd_{T'}=1$, along with a contractive injective operator $Y:\fH'\to \fH$ with dense range such that $T_kY=YT'_k$ for $1\leq k\leq d$. 	Up to unitary equivalence, we may assume by Theorem \ref{T:popescudilation} that $\fH'\subset \fF_d^2$ and 
	$T^{'*}_k = L^*_k|_{\fH'}$ for every $k=1,\dots,d.$ Define $X:\fF_d^2\to \fH$ as $X=YP_{\fH'}$.
	For $1\leq k\leq d$, we calculate using that $\fH'$ is co-invariant for $L$ that
	\begin{align*}
		XL_k &= YP_{\fH'}L_k = YP_{\fH'}L_kP_{\fH'}\\
				 &= YT'_kP_{\fH'} = T_kYP_{\fH'}\\
				 &= T_kX.
	\end{align*}
	Moreover, we note that
	\[
		\ol{X\fF_d^2}=\ol{YP_{\fH'}\fF_d^2}=\ol{Y\fH'}=\fH
	\]
	so that $X$ has dense range, which establishes the first statement. For the second, we assume in addition that $T$ is commuting. Then, 
	\[
	0=(T_jT_k-T_kT_j)X=X(L_jL_k-L_kL_j)
	\]
	and thus $\ker X$ contains $(L_j L_k -L_k L_j)\fF^2_d$	for every $1\leq j,k\leq d$.
	We infer that $\ker X$ contains $(\fF^s_d)^\perp$ by \cite[Proposition 2.4]{davidson1998alg}, whence $X=XP_{\fF^s_d}$.
	By Theorem \ref{T:quotientcomm}, there is a unitary operator $U:H^2_d\to \fF^s_d$ with the property that
\[
UM_{x_k}U^*=P_{\fF^s_d}L_k|_{\fF^s_d}, \quad 1\leq k\leq d.
\]
	We set $X'=XU$, which still has dense range. Using that $\fF^s_d$ is co-invariant for $L$ we obtain
	\begin{align*}
	X'M_{x_k}&=XUM_{x_k}=XP_{\fF^s_d}L_kU\\
	&=XL_kU=T_kX' 
	\end{align*}
	for $k=1,\dots,d$.
\end{proof}

Comparing the previous result with Theorem \ref{T:weakorth}, we obtain the following consequences.

\begin{corollary}\label{C:cyclicweakorth}
	Let $T=(T_1,\ldots,T_d)$ be a pure cyclic row contraction on some Hilbert space $\fH$. Then, there is a vector $\xi\in \fH$ that is cyclic for $T$ with the additional property that $\sum_{w\in\bF_d^+} T_w \xi \otimes T_w \xi\leq I.$ Moreover, the following statements hold.
	\begin{enumerate}
	
	\item[\rm{(1)}] If $\|T_w\xi\|=1$ for every $w\in \bF_d^+$, then $T$ must be unitarily equivalent to the standard $d$-shift.

	\item[\rm{(2)}] If $d>1$ and $T$ is commuting, then $\|T_w\xi\|<1$ for some $w\in\bF_d^+$.
	\end{enumerate}
\end{corollary}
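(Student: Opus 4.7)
The plan is to read the vector $\xi$ directly off the intertwiner produced by Theorem \ref{T:cyclicpure}. Let $X:\fF^2_d\to \fH$ be a contraction with dense range satisfying $XL_k=T_kX$ for $1\leq k\leq d$, and set $\xi=X\Omega$. The identity $T_w\xi=XL_w\Omega$ together with the density of $\ran X$ immediately yields that $\xi$ is cyclic for $T$. Moreover, expanding $X^*h$ against the orthonormal basis $\{L_w\Omega\}_{w\in \bF^+_d}$ gives
\[
X^*h=\sum_{w\in \bF^+_d} \langle h,T_w\xi\rangle L_w\Omega, \qquad h\in \fH,
\]
so that $XX^*=\sum_w T_w\xi\otimes T_w\xi$, and the required bound follows from $\|X\|\leq 1$.

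For (1), the strategy is to promote $X$ from a contraction to a unitary under the hypothesis. If $\|T_w\xi\|=1$ for every $w$, then $\|XL_w\Omega\|=\|L_w\Omega\|$, so the positive operator $I-X^*X$ has vanishing quadratic form on each basis vector $L_w\Omega$. Passing to its positive square root, one sees that $I-X^*X$ annihilates each $L_w\Omega$, and hence $X^*X=I$ by density of the basis. Combined with the density of $\ran X$, this upgrades $X$ to a unitary operator, and the intertwining $XL_k=T_kX$ delivers the unitary equivalence of $T$ with the standard $d$-shift.

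For (2), I argue by contradiction. If $\|T_w\xi\|=1$ for every $w$, then (1) identifies $T$ up to unitary equivalence with $L=(L_1,\ldots,L_d)$. But when $d>1$ the operators $L_1,\ldots,L_d$ are isometries with pairwise orthogonal ranges, so they fail to commute; for instance, $L_1L_2\Omega=e_1\otimes e_2$ and $L_2L_1\Omega=e_2\otimes e_1$ are distinct basis vectors of $\fF^2_d$. This contradicts the commutativity of $T$. I do not foresee a genuine obstacle in carrying out this plan; the only step requiring any care is the upgrade from pointwise preservation $\|XL_w\Omega\|=\|L_w\Omega\|$ on each basis vector to the global identity $X^*X=I$ in part (1), which is handled by elementary positive operator theory.
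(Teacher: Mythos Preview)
Your argument is correct. The construction of $\xi$ and part (2) match the paper's proof exactly (the paper simply cites Theorems \ref{T:weakorth} and \ref{T:cyclicpure}, whose combination unwinds to precisely your computation of $XX^*$).

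For part (1) you take a slightly different route. The paper forgets the intertwiner and argues directly from the inequality $\sum_{w} T_w\xi\otimes T_w\xi\leq I$: evaluating the quadratic form at $h=T_v\xi$ gives $\sum_w |\langle T_v\xi,T_w\xi\rangle|^2\leq 1$, and since the diagonal term already equals $1$, all cross terms vanish, so $\{T_w\xi\}_w$ is an orthonormal basis and the map $L_w\Omega\mapsto T_w\xi$ is the desired unitary. You instead keep the operator $X$ in play and upgrade it to a unitary via the positive-square-root trick on $I-X^*X$. Both arguments are short and valid; yours has the virtue that the unitary implementing the equivalence is the very $X$ you started with, while the paper's has the virtue of being phrased purely in terms of the vector $\xi$ appearing in the statement.
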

\begin{proof}
	The existence of a vector $\xi$ with the announced property follows by combining Theorems \ref{T:weakorth} and \ref{T:cyclicpure}. For (1), assume that $\|T_w\xi\|=1$ for each $w\in \bF_d^+$. The condition that $\sum_{w\in\bF_d^+} T_w \xi \otimes T_w \xi\leq I$ then forces $\{T_w\xi:w\in \bF_d^+\}$ to be an orthonormal set.
	But since $\xi$ is cyclic, the set $\{T_w\xi:w\in \bF_d^+\}$ must in fact be an orthonormal basis for $\fH$. It is now straightforward to construct a unitary equivalence between $T$ and the standard $d$-shift $L$. Finally, (2) is trivial consequence of (1) since the standard $d$-shift is non-commuting.
\end{proof}

We close this section by refining Theorem \ref{T:cyclicpure} to obtain another one of our main results. See Subsection \ref{SS:functcalc} for the definition of the $d$-tuple $M_\J$.

\begin{corollary}\label{C:cyclicann}
	Let $T=(T_1,\ldots,T_d)$ be a pure cyclic commuting row contraction on some Hilbert space $\fH$.
	Let  $\J=\Ann_{\comm}(T)$. Then $M_\J$ is a quasi-affine transform of $T$.
\end{corollary}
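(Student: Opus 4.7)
The plan is to leverage Theorem \ref{T:cyclicpure} together with the uniqueness statement in Lemma \ref{L:anncoinvcomm}. Since $T$ is a pure cyclic commuting row contraction, Theorem \ref{T:cyclicpure} will supply a contractive operator $X':H^2_d\to \fH$ with dense range satisfying $X'M_{x_k}=T_kX'$ for every $1\leq k\leq d$. This intertwining forces $\ker X'$ to be invariant under each $M_{x_k}$, so its orthocomplement $\fV:=H^2_d\ominus \ker X'$ is co-invariant for the standard commuting $d$-shift. Setting $M=(P_{\fV}M_{x_1}|_{\fV},\ldots,P_{\fV}M_{x_d}|_{\fV})$, Lemma \ref{L:denserangeinj} guarantees that the restriction $X'|_{\fV}:\fV\to \fH$ is injective, has dense range, and intertwines $M$ with $T$, so that $M\prec T$.

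Next I would reduce the problem to identifying $\fV$ with $H_\J$. For any $\phi\in \J=\Ann_{\comm}(T)$ and any $f\in H^2_d$, the intertwining gives $X'M_\phi f=\phi(T)X'f=0$, so $\J H^2_d\subset \ker X'$ and consequently $[\J H^2_d]\subset \ker X'$, which already yields $\fV\subset H_\J$. For the reverse inclusion, the strategy is to establish $\Ann_{\comm}(M)=\J$ and then invoke the uniqueness part of Lemma \ref{L:anncoinvcomm} to conclude that $\fV=H_\J$ and $M=M_\J$. This gives precisely the desired relation $M_\J\prec T$, realized through $X'|_{\fV}$.

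The remaining key point is therefore the equality $\Ann_{\comm}(M)=\Ann_{\comm}(T)$, a commutative analogue of Lemma \ref{L:anntransf}(3). Since $M$ is the compression of the standard commuting $d$-shift to the co-invariant subspace $\fV$, it is a pure commuting row contraction, and $T$ is pure by hypothesis. By Corollary \ref{C:functcalccomm}, both admit weak-$*$ continuous functional calculi from $\M_d$ that agree with polynomial evaluation on generators. Approximating a multiplier $\phi\in \M_d$ weak-$*$ by polynomials and using the intertwining $X'|_{\fV}M_{x_k}|_{\fV}=T_kX'|_{\fV}$ will upgrade to $X'|_{\fV}\phi(M)=\phi(T)X'|_{\fV}$ for every $\phi\in \M_d$; the injectivity and dense range of $X'|_{\fV}$ then yield $\phi(M)=0$ if and only if $\phi(T)=0$. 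I expect this weak-$*$ approximation step to be the main technical point requiring care, but once it is in hand the corollary follows by a direct assembly of Theorem \ref{T:cyclicpure}, Lemma \ref{L:denserangeinj}, and Lemma \ref{L:anncoinvcomm}.
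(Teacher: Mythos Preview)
Your proposal is correct and follows essentially the same approach as the paper: apply Theorem \ref{T:cyclicpure}, use Lemma \ref{L:denserangeinj} to pass to the injective restriction on the co-invariant complement of $\ker X'$, establish equality of annihilators, and conclude via Lemma \ref{L:anncoinvcomm}. The only cosmetic difference is that the paper obtains the annihilator equality by invoking Lemma \ref{L:anntransf} (stated for $\Ann_{\nc}$) and then translating via Equation \eqref{Eq:ann}, whereas you propose the direct commutative argument; both rest on the same weak-$*$ continuity of the functional calculi, so there is no substantive gap.
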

\begin{proof}

	By Theorem \ref{T:cyclicpure},  there is a contractive operator $X:H^2_d\to\fH$ with dense range such that $XM_{x_k}=T_kX$ for $k=1,\dots,d$. In turn, invoking Lemma \ref{L:denserangeinj}, we see that there is a closed subspace $\fK\subset H^2_d$ which is co-invariant for $M_x$ and such that
	\[
	(P_{\fK}M_{x_1}|_{\fK},\ldots,P_{\fK}M_{x_d}|_{\fK})\prec T.
	\]
	It follows from Lemma \ref{L:anntransf}   that
	\[
	\Ann_{\nc}(T)=\Ann_{\nc}(P_{\fK}M_{x_1}|_{\fK},\ldots,P_{\fK}M_{x_d}|_{\fK})
	\]
	whence 
	\[
	\Ann_{\comm}(T)=\Ann_{\comm}(P_{\fK}M_{x_1}|_{\fK},\ldots,P_{\fK}M_{x_d}|_{\fK})
	\]
	by virtue of Equation (\ref{Eq:ann}). By Lemma \ref{L:anncoinvcomm}, we conclude that $\fK=H_\J$ so that $M_\J\prec T$.	
	
\end{proof}

Given  a pure cyclic row contraction $T=(T_1,\ldots,T_d)$ with $\fb=\Ann_{\nc}(T)$, we do not claim that $L_\fb\prec T$.  The difficulty in achieving this is connected with the discussion following Lemma \ref{L:anncoinv}. This is also related to determining whether a given co-invariant subspace coincides with the so-called ``maximal $\fb$-constrained piece" of $L$ as defined in \cite{popescu2006}. We will not address these issues further in this paper.

To summarize, the basic outcome of this section is a classification of some pure row contractions by means of compressions of the standard $d$-shift. By allowing for quasi-affine transforms we capture the behaviour of a class of objects that is more flexible than those with defect equal to one (which is required in Theorem \ref{T:popescudilation}). To illustrate this flexibility,  we note for instance that if $T$ is a cyclic pure row contraction with defect one, then the scaled row contraction $rT$ has defect equal to $\dim \fH$ for $0<r<1$, even though it is still pure and cyclic. Furthermore, the cyclicity condition is preserved under similarity.

\section{Rigidity of invariant subspaces}\label{S:rigidity}

In this section, we examine the rigidity of invariant or co-invariant subspaces for pure row contractions. More precisely, assume that $T=(T_1,\ldots,T_d)$ is a pure row contraction on some Hilbert space $\fH$, and let $\fM\subset \fH$ be a closed subspace which is invariant for $T$. Throughout, we let $T|_\fM$ denote the restricted $d$-tuple
\[
(T_1|_{\fM},\ldots,T_d|_{\fM})
\]
and $P_{\fM^\perp}T|_{\fM^\perp}$ denote the compressed $d$-tuple
\[
(P_{\fM^\perp}T_1|_{\fM^\perp},\ldots,P_{\fM^\perp}T_d|_{\fM^\perp}).
\]
We aim to determine the extent to which the annihilators 
\[
\Ann_{\nc}(T|_\fM) \qor \Ann_{\nc}(P_{\fM^\perp}T|_{\fM^\perp})
\] 
determine $\fM$. We begin by revisiting a pathological example that we encountered before.

\begin{example}\label{E:kribs}
By \cite[Theorem 3.7]{kribs2001Ld}, there is a proper closed subspace $\fK\subset \fF^2_d$ which is co-invariant for $\fL_d$ and which satisfies
\[
\{\theta\in \fL_d:\theta \fF^2_d\subset \fK^\perp\}=\{0\}.
\]
In particular, we see that 
\[
\Ann_{\nc}(P_{\fK}L|_{\fK})=\{0\}=\Ann_{\nc}(L)
\]
yet $\fK\neq \fF^2_d$. \qed
\end{example}

In view of this difficulty, we will focus our attention for the remainder of the paper on the commuting setting. 
We start by considering a relatively simple case. A commuting $d$-tuple $(N_1,\ldots,N_d)$ is said to be \emph{nilpotent} if for each $1\leq k\leq d$ there is $m_k\in \bN$ such that $N_k^{m_k}=0$. 
\begin{theorem}
	Let $N=(N_1,\dots,N_d)$ be a cyclic nilpotent commuting row contraction on some Hilbert space $\fH$. Let $\fM\subset \fH$ be an invariant subspace for $N$ such that $\Ann_{\comm}(N|_\fM)=\Ann_{\comm}(N)$. Then, $\fM=\fH$.
	\label{T:NilpNoPropSub}
\end{theorem}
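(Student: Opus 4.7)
My plan is to reduce the statement to a familiar fact about Artinian local commutative rings. The first step is to observe that the hypotheses force $\fH$ to be finite-dimensional. Since $N_1,\ldots,N_d$ commute and each is nilpotent, say $N_k^{m_k}=0$, every product $N_w$ with $w\in\bF_d^+$ equals a monomial $N^\alpha=N_1^{\alpha_1}\cdots N_d^{\alpha_d}$, and this vanishes as soon as some $\alpha_k\geq m_k$. Fixing a cyclic vector $\xi$ for $N$, I conclude that $\fH=\spn\{N^\alpha\xi:0\leq \alpha_k<m_k \text{ for all } k\}$, so $\dim\fH<\infty$; one may assume $\fH\neq\{0\}$.

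Next, I would identify $\fH$ with a concrete ring. The polynomial action $p\mapsto p(N)\xi$ is a surjective homomorphism of $\bC[x]$-modules $\bC[x_1,\ldots,x_d]\to\fH$, and its kernel equals $\J_0:=\bC[x_1,\ldots,x_d]\cap\Ann_\comm(N)$: if $p(N)\xi=0$ then $p(N)N^\alpha\xi=N^\alpha p(N)\xi=0$ for all $\alpha$, and cyclicity forces $p(N)=0$. Setting $\A:=\bC[x_1,\ldots,x_d]/\J_0$, the induced isomorphism $\A\to\fH$ carries invariant subspaces bijectively onto ideals of $\A$. Let $I\subseteq\A$ be the ideal corresponding to $\fM$.

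To rephrase the annihilator hypothesis algebraically, I would combine Theorem~\ref{T:gleasontrick} with nilpotency: choosing $n$ large enough that $N^\alpha=0$ whenever $|\alpha|=n$, any $\phi\in\M_d$ satisfies $\phi(N)=r(N)$, where $r$ is the degree-$<n$ Taylor polynomial of $\phi$ at the origin. This yields a surjective algebra homomorphism $\pi:\M_d\to\A$ with $\ker\pi=\Ann_\comm(N)$, and the condition $\phi(N)|_\fM=0$ becomes $\pi(\phi)\cdot I=0$ in $\A$. Thus $\Ann_\comm(N|_\fM)=\Ann_\comm(N)$ translates to the purely algebraic statement $\Ann_\A(I)=0$.

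The finish is a standard commutative-algebra argument. Since each $x_k$ is nilpotent in $\A$, the ideal $\mathfrak{m}:=(x_1,\ldots,x_d)\A$ is the unique maximal ideal of $\A$, as every element outside $\mathfrak{m}$ has nonzero image in $\A/\mathfrak{m}=\bC$ and is a unit. Being Artinian, $\mathfrak{m}$ is nilpotent; letting $k$ be maximal with $\mathfrak{m}^k\neq 0$, we have $\mathfrak{m}^k\subseteq S:=\Ann_\A(\mathfrak{m})$, so $S\neq 0$. If $\fM\subsetneq\fH$, then $I\subsetneq\A$ forces $I\subseteq\mathfrak{m}$, whence $S\cdot I\subseteq S\cdot\mathfrak{m}=0$, so $0\neq S\subseteq\Ann_\A(I)$, contradicting $\Ann_\A(I)=0$. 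Hence $\fM=\fH$. The only step requiring any care is the passage from $\M_d$-annihilators to ideals in $\A$; once that dictionary is set up the rest is essentially painless.
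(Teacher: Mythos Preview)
Your proof is correct and follows essentially the same route as the paper: reduce to finite dimensions via cyclicity and nilpotency, identify $\fH$ with the Artinian local ring $\A=\bC[x]/\J_0$ so that $\fM$ becomes an ideal $I$, and then observe that a proper ideal in such a ring has nonzero annihilator. The only cosmetic difference is that where you invoke the socle $S=\Ann_\A(\mathfrak{m})$ abstractly, the paper exhibits a concrete nonzero element of it by choosing a monomial $x^\beta\notin\B$ of maximal length.
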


\begin{proof}
	Let $\xi\in \fH$ be a cyclic vector for $N$. We find
	\[
	\fH=\ol{\spn\{N^\alpha\xi:\alpha\in \bN^d\}}.
	\]
	Since $N$ is nilpotent, this implies that $\fH$ is finite-dimensional and 
	\[
	\fH=\{p(N)\xi:p\in \bC[x_1,\ldots,x_d]\}.
	\]
	The subset $\J\subset \bC[z_1,\ldots,z_d]$ defined as
	\[
	\J=\{p\in \bC[z_1,\ldots,z_d]:p(N)\xi\in \fM\}
	\]
	is an ideal with the property that
	\[
	\fM=\{p(N)\xi:p\in \J\}.
	\]
	For convenience, we set
	\[
	\A=\Ann_{\comm}(N|_\fM)\cap \bC[x_1,\ldots,x_d]
	\]
	and
	\[
	\B=\Ann_{\comm}(N)\cap \bC[x_1,\ldots,x_d].
	\]
	Observe that a polynomial $p$ satisfies $p(N)=0$ if and only if $p(N)\xi=0$. Thus,
	\[
	\A=\{f\in \bC[x_1,\ldots,x_d]: f\J\subset \B \}.
	\]
	Clearly, the set $\{\alpha\in \bN^d: x^\alpha\notin \B\}$ is finite and contains $(0,\ldots,0)$. Choose an element $\beta$ of that set with maximal length. Then, we have $x_k x^\beta\in \B$ for every $1\leq k\leq d$.

	Assume now that $\fM$ is a proper subspace of $\fH$, whence $\xi\notin \fM$. Let $q$ be a polynomial with non-zero constant term. Then, there is another polynomial $r$ such that $r(N)q(N)=I$, so in particular $\xi=r(N)q(N)\xi$. We conclude that $q\notin \J$, which shows $\J$ consists of polynomials with zero constant term. Hence, $x^\beta \J\subset \B$, and therefore $x^\beta\in \A$. This shows that $\A\neq \B$ and in particular $\Ann_{\comm}(N|_\fM)\neq \Ann_{\comm}(N)$.
\end{proof}

In the case of a single cyclic constrained contraction, the conclusion of the previous theorem always holds \cite[Theorem III.2.13]{bercovici1988}. Unfortunately, the general multivariate situation is more complicated. In fact, the aforementioned theorem does not even extend to the commuting bivariate case, as the following example shows. 

\begin{example}\label{E:invsubann}
  Let $\J$ be the weak-$*$ closed ideal of $\M_2$ generated by $x_2$. Then, the space $H_\J=H^2_2\ominus [\J H^2_2]$ consists of those functions in $H^2_2$ that depends only on the variable $x_1$ and in particular we have
  \[
  P_{H_\J}M_{x_2}|_{H_\J}=0.
  \]
The pure commuting pair 
  \[
  M_\J=(P_{H_\J}M_{x_1}|_{H_\J},0)
  \]
is constrained, and by Lemma \ref{L:uecomm} we have $\Ann_{\comm}(M_\J)=\J$. Note also that $1\in H_\J$ is a cyclic vector for $M_\J$. Let $U:H^2_1\to H_\J$ be the linear operator defined as
\[
(Uf)(z_1,z_2)=f(z_1), \quad (z_1,z_2)\in\bB_2
\]
for $f\in H^2_1$. It is readily verified that $U$ is unitary. Observe now that 
\[
U^*(P_{H_\J}M_{x_1}|_{H_\J})U
\] 
is the usual isometric $1$-shift.	Let $\theta\in\M_1$ be a non-constant inner function and let $\fM=U(\theta H^2_1)\subset H_\J$. Then, $\fM$ is a proper closed subspace of $H_\J$ which is invariant for $M_\J$. If we let $R_1=P_{H_\J}M_{x_1}|_{\fM}$, then we see that $(R_1,0)$ is the restriction of $M_\J$ to $\fM$. Further, if we let $U'=UM_{\theta}U^*$ then we see that $U':H_\J\to\frk{M}$
	 is unitary with
	\[
	U'(P_{H_\J}M_{x_1}|_{H_\J})U'^*=R_1.
	\]
	Thus $R$ is unitarily equivalent to $M_\J$, and in particular it is a pure cyclic commuting row contraction with
	\[
	\Ann_{\nc}(R)=\Ann_{\nc}(M_\J) \qand \Ann_{\comm}(R)=\Ann_{\comm}(M_\J),
	\]
	even though $\fM\neq H_\J$.
	
\qed
\end{example}

We note that the zero set of the commutative annihilator has dimension one in the preceding example. At present, we know of no multivariate counterexample to \cite[Theorem III.2.13]{bercovici1988} where this zero set has dimension zero. Theorem \ref{T:NilpNoPropSub} may be evidence towards the relevance of having a zero-dimensional zero set, but at the time of this writing we do not know for sure.

In light of Example \ref{E:invsubann}, we then seek natural sufficient conditions for multivariate rigidity of invariant subspaces. The next development will gain insight by first considering co-invariant subspaces; it is based on the following simple fact.

\begin{lemma}\label{L:rigidcoinv}
	Let $\fM,\fN\subset H^2_d$ be closed subspaces that are co-invariant for $\M_d$.
	Then, we have $\fM=\fN$ if and only if $\Ann_{\comm}(P_{\fM}M_x|_\fM)=\Ann_{\comm}(P_{\fN}M_x|_\fN)$.
\end{lemma}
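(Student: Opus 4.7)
The forward direction is trivial: if $\fM=\fN$, then the compressions $P_{\fM}M_x|_\fM$ and $P_{\fN}M_x|_\fN$ are literally the same tuple of operators, so their commutative annihilators coincide. All the content of the statement is in the reverse implication.

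For the reverse direction, my plan is to invoke Lemma \ref{L:anncoinvcomm} as a black box. Set
\[
\J_\fM=\Ann_{\comm}(P_{\fM}M_{x_1}|_{\fM},\ldots,P_{\fM}M_{x_d}|_{\fM})
\]
and define $\J_\fN$ analogously. By Lemma \ref{L:anncoinvcomm}, applied to the co-invariant subspace $\fM\subset H^2_d$, the ideal $\J_\fM$ is the unique weak-$*$ closed ideal of $\M_d$ with the property that $\fM=H_{\J_\fM}$. Applying the same lemma to $\fN$, we get $\fN=H_{\J_\fN}$. Under the hypothesis $\J_\fM=\J_\fN$, these two identities immediately give $\fM=\fN$.

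So in essence this lemma is just the ``injectivity'' content of Lemma \ref{L:anncoinvcomm}, repackaged as a rigidity statement for co-invariant subspaces. There is no real obstacle to overcome: the work has already been done in the uniqueness clause of Lemma \ref{L:anncoinvcomm}, which itself follows from \cite[Theorem 2.4]{DRS2015}. The only point worth emphasizing in the writeup is that both subspaces $\fM$ and $\fN$ are required to be co-invariant for all of $\M_d$ (not merely for $M_{x_1},\dots,M_{x_d}$), since this co-invariance hypothesis is what enables the application of Lemma \ref{L:anncoinvcomm}; in particular, this is where the commutative setting is genuinely used, in contrast with the pathology exhibited by Example \ref{E:kribs} in the non-commuting context.
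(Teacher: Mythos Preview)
Your proposal is correct and follows exactly the same approach as the paper, which simply states that the result follows immediately from Lemma~\ref{L:anncoinvcomm}. Your write-up merely unpacks this one-line justification, making explicit the use of the uniqueness clause in Lemma~\ref{L:anncoinvcomm} for each of $\fM$ and $\fN$.
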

\begin{proof}
This follows immediately from Lemma \ref{L:anncoinvcomm}.
\end{proof}

Before proceeding, we introduce some notation. Let  $S=(S_1,\ldots,S_d)$ be a $d$-tuple of operators on some Hilbert space $\fH$ and let $S'=(S'_1,\ldots,S'_d)$ be another $d$-tuple of operators on some other Hilbert space $\fH'$. Denote by $\Q(S',S)$ the collection of injective bounded linear operators $X :\fH'\to \fH$ with dense range such that $X S'_k=S_kX $ for every $k=1,\dots,d$. 
Recall that if $T$ is a cyclic pure commuting row contraction and $\J=\Ann_{\comm}(T)$, then $\Q(M_\J,T)$ is non-empty by Corollary \ref{C:cyclicann}.

We now prove a rigidity result for co-invariant subspaces of cyclic pure commuting row contractions.

\begin{theorem}\label{T:rigidityQcoinv}
	Let $T=(T_1,\ldots,T_d)$ be a cyclic pure commuting row contraction on some Hilbert space $\fH$. Put $\J=\Ann_{\comm}(T)$.
	Let $\fM,\fN\subset \fH$ be closed co-invariant subspaces for $T$ such that
	\[
		\Ann_{\comm}(P_{\fM}T|_{\fM})=\Ann_{\comm}(P_{\fN}T|_{\fN}).
	\]
	Then, $\ol{X ^*\fM}=\ol{X ^*\fN}$ for every $X \in \Q(M_\J,T)$.
	In particular, $\fM=\fN$ if $X ^*\fM$ and $X ^*\fN$ are closed for some $X \in \Q(M_\J,T)$.
\end{theorem}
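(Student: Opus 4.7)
The plan is to transport the rigidity problem on $\fH$ to the model space $H_\J$ via the quasi-affine intertwiner from Corollary \ref{C:cyclicann}, and then invoke the general rigidity statement for co-invariant subspaces of $M_x$ provided by Lemma \ref{L:rigidcoinv}. Fix $X\in\Q(M_\J,T)$; thus $X:H_\J\to\fH$ is injective with dense range and $XM_{\J,k}=T_kX$ for $1\leq k\leq d$. Taking adjoints gives $X^*T_k^*=M_{\J,k}^*X^*$, and since $\fM$ is co-invariant for $T$ we see that $X^*\fM$ is invariant for each $M_{\J,k}^*$; hence $\fK:=\ol{X^*\fM}$ is a closed co-invariant subspace of $H_\J$ for $M_\J$. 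Because $H_\J$ is itself co-invariant for $M_x$ on $H^2_d$, it follows that $\fK$ is co-invariant for $M_x$ and that $P_\fK M_\J|_\fK=P_\fK M_x|_\fK$.

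Next, I would produce a quasi-affine intertwiner between the two compressions. Set $Y=X^*|_\fM:\fM\to\fK$, which is injective (since $X$ has dense range makes $X^*$ injective) and has dense range (by the definition of $\fK$). Its adjoint is $Y^*=P_\fM X|_\fK:\fK\to\fM$, which is therefore injective with dense range. Using the co-invariance of $\fM$ and of $\fK$ one checks
\[
Y(T_k|_\fM)^*=Y T_k^*|_\fM=M_{\J,k}^*X^*|_\fM=(P_\fK M_\J|_\fK)^*Y,
\]
so that $Y^*(P_\fK M_\J|_\fK)=(P_\fM T|_\fM)Y^*$. Applying Lemma \ref{L:anntransf} twice (once for injectivity, once for dense range) yields
\[
\Ann_{\nc}(P_\fM T|_\fM)=\Ann_{\nc}(P_\fK M_\J|_\fK),
\]
and passing through $\Psi\circ q_w$ we obtain the same equality for $\Ann_{\comm}$.

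Carrying out the analogous argument with $\fN$ in place of $\fM$, and writing $\fK'=\ol{X^*\fN}$, we get $\Ann_{\comm}(P_\fN T|_\fN)=\Ann_{\comm}(P_{\fK'} M_\J|_{\fK'})=\Ann_{\comm}(P_{\fK'} M_x|_{\fK'})$. Combining with the hypothesis $\Ann_{\comm}(P_\fM T|_\fM)=\Ann_{\comm}(P_\fN T|_\fN)$ gives $\Ann_{\comm}(P_\fK M_x|_\fK)=\Ann_{\comm}(P_{\fK'} M_x|_{\fK'})$, and since $\fK,\fK'$ are co-invariant for $M_x$ in $H^2_d$, Lemma \ref{L:rigidcoinv} forces $\fK=\fK'$, i.e.\ $\ol{X^*\fM}=\ol{X^*\fN}$. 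For the final clause, if both $X^*\fM$ and $X^*\fN$ are closed then they are equal, and the injectivity of $X^*$ (again because $X$ has dense range) yields $\fM=\fN$.

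The only subtle step is identifying the correct intertwiner between the two compressions and verifying that it is injective with dense range; the remaining pieces are routine once the framework is in place. I would expect potential difficulty only in making sure the direction of the intertwining is right (hence the adjoint bookkeeping above) and in observing that $P_\fK M_\J|_\fK$ coincides with $P_\fK M_x|_\fK$ so that Lemma \ref{L:rigidcoinv} applies directly.
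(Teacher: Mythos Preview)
Your proposal is correct and follows essentially the same route as the paper: you transport the co-invariant subspaces to $H_\J$ via $X^*$, build the quasi-affine intertwiner $Y=X^*|_\fM$ (whose adjoint $Y^*=P_\fM X|_{\ol{X^*\fM}}$ is exactly the paper's $Z$), use Lemma~\ref{L:anntransf} to match annihilators, and conclude with Lemma~\ref{L:rigidcoinv}. The only cosmetic difference is that the paper defines $Z=P_\fM X|_{\fM'}$ first and then observes $Z^*=X^*|_\fM$, whereas you start from the other side; the content is identical.
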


\begin{proof}
	Fix $X \in \Q(M_\J,T)$. Define $\fM'=\ol{X ^*\fM}$ and $\fN'=\ol{X ^*\fN}$, so that $\fM'$ and $\fN'$ are co-invariant for $M_\J$, and hence for $M_x$. Let 
	$
		Z=P_{\fM}X |_{\fM'}:\fM'\to \fM.
	$
	We have $Z^*=X ^*|_{\fM}$, so $Z^*$ is injective with dense range and 
	\[
	Z^*\in \Q(T^*|_\fM,M_x^*|_{\fM'}).
	\]
	Thus,
	\[
	Z\in \Q(P_{\fM'}M_x|_{\fM'},P_\fM T|_{\fM}).
	\]
	Likewise, the operator $Y=P_{\fN}X |_{\fN'}$
	is injective with dense range, and
	\[
	Y\in\Q(P_{\fN'}M_x|_{\fN'},P_\fN T|_{\fN}).
	\]
	We conclude from Lemma \ref{L:anntransf} and from the assumption on $\fM$ and $\fN$ that
	\[
	\Ann_{\comm}(P_{\fM'}M_x|_{\fM'})=\Ann_{\comm}(P_{\fN'}M_x|_{\fN'}).
	\]
	By virtue of Lemma \ref{L:rigidcoinv}, we obtain $\fM'=\fN'$, that is  $\ol{X ^*\fM}=\ol{X ^*\fN}$.
	If $X ^*\fM$ and $X ^*\fN$ are closed, then $X ^*\fM=X ^*\fN$ and we conclude that $\fM=\fN$ since $X ^*$ is injective.
\end{proof}

The following consequence is noteworthy as it applies in particular to the nilpotent setting, thus complementing Theorem \ref{T:NilpNoPropSub}.

\begin{corollary}\label{C:rigidity1}
	Let $T=(T_1,\ldots,T_d)$ be a cyclic pure commuting row contraction on some finite-dimensional Hilbert space $\fH$.
	Let $\fM,\fN\subset \fH$ be co-invariant subspaces for $T$ such that
	\[
		\Ann_{\comm}(P_{\fM}T|_\fM)=\Ann_{\comm}(P_{\fN}T|_\fN).
	\]
	Then, $\fM=\fN$.
\end{corollary}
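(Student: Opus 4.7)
The plan is to apply Theorem \ref{T:rigidityQcoinv} directly, after observing that the finite-dimensionality of $\fH$ guarantees the ``closedness'' hypothesis in that theorem comes for free.

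First I would invoke Corollary \ref{C:cyclicann}: since $T$ is a pure cyclic commuting row contraction with $\J=\Ann_{\comm}(T)$, there exists $X\in\Q(M_\J,T)$, so in particular this set is non-empty. Pick any such $X:H_\J\to\fH$; it is injective and has dense range.

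Next I would exploit the assumption that $\fH$ is finite-dimensional. Because $X(H_\J)$ is dense in $\fH$ and $\fH$ is finite-dimensional, we actually have $X(H_\J)=\fH$. Combined with injectivity, this forces $H_\J$ to be finite-dimensional as well and $X$ to be a linear bijection (equivalently, $X^*$ is a bijection from $\fH$ onto $H_\J$). In particular, any linear subspace of $\fH$ gets mapped by $X^*$ to a subspace of the finite-dimensional space $H_\J$, which is automatically closed. Hence both $X^*\fM$ and $X^*\fN$ are closed.

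Finally, I would apply Theorem \ref{T:rigidityQcoinv} with this choice of $X$. The hypothesis
\[
\Ann_{\comm}(P_{\fM}T|_{\fM})=\Ann_{\comm}(P_{\fN}T|_{\fN})
\]
is given, so the theorem yields $\ol{X^*\fM}=\ol{X^*\fN}$. Since $X^*\fM$ and $X^*\fN$ are already closed, we obtain $X^*\fM=X^*\fN$, and injectivity of $X^*$ (or equivalently, surjectivity of $X$) gives $\fM=\fN$, completing the argument. There is no real obstacle here; the corollary is essentially just the observation that in finite dimensions the ``closedness'' technical assumption of Theorem \ref{T:rigidityQcoinv} is vacuous.
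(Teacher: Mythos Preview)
Your proposal is correct and follows essentially the same approach as the paper, which simply notes that finite-dimensional subspaces are closed and invokes Theorem~\ref{T:rigidityQcoinv}. Your extra observation that $H_\J$ itself is finite-dimensional is true but unnecessary: since $\fM$ and $\fN$ are subspaces of the finite-dimensional space $\fH$, their images $X^*\fM$ and $X^*\fN$ are automatically finite-dimensional and hence closed, regardless of the dimension of $H_\J$.
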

\begin{proof}
Finite-dimensional subspaces are closed, so this follows at once from Theorem \ref{T:rigidityQcoinv}.
\end{proof}

We now return to invariant subspaces and identify a sufficient condition for rigidity.

\begin{theorem}\label{T:rigidityQinv}
	Let $T=(T_1,\ldots,T_d)$ be a pure commuting row contraction on some Hilbert space $\fH$. Let $\J\subset \M_d$ be a weak-$*$ closed ideal and let $X \in \Q(M_\J^*,T)$. Let $\fM,\fN\subset \fH$ be closed invariant subspaces for $T$ such that $\Ann_{\comm}(T|_\fM)=\Ann_{\comm}(T|_\fN)$. If $\fM$ and $\fN$ are contained in the range of $X$, then $\fM=\fN$.
\end{theorem}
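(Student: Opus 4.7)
The strategy mirrors that of Theorem \ref{T:rigidityQcoinv}: pull $\fM$ and $\fN$ back through $X$ to co-invariant subspaces of $H^2_d$, and invoke the rigidity provided by Lemma \ref{L:anncoinvcomm}. The new wrinkle is that the intertwining built into $X\in\Q(M_\J^*,T)$ involves the adjoint tuple $M_\J^*$ rather than $M_\J$, so a detour through coefficient conjugation will be required.

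Set $\fM'=X^{-1}(\fM)$ and $\fN'=X^{-1}(\fN)$. These are closed subspaces of $H_\J$, and the identity $XM_{\J,k}^*=T_kX$ together with invariance of $\fM$ for $T$ forces $\fM'$ to be invariant for every $M_{\J,k}^*$, that is, co-invariant for $M_\J$ in $H_\J$. Since $H_\J$ is itself co-invariant for $M_x$ in $H^2_d$, it follows that $\fM'$ is co-invariant for $M_x$; the same holds for $\fN'$. The hypothesis $\fM\subset\ran X$ makes the bounded restriction $Y_\fM:=X|_{\fM'}:\fM'\to\fM$ a linear bijection, which the open mapping theorem promotes to a topological isomorphism satisfying $T_k|_\fM\,Y_\fM=Y_\fM\,M_{\J,k}^*|_{\fM'}$. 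An analogous $Y_\fN$ is available on $\fN'$.

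The key identification I aim to establish is
\[
\Ann_{\comm}(T|_\fM)=\{\phi\in\M_d:\bar\phi\in\Ann_{\comm}(P_{\fM'}M_x|_{\fM'})\},
\]
where $\bar\phi$ denotes the multiplier obtained from $\phi$ by conjugating its Taylor coefficients. I would first verify this for polynomials $p$: using that $\fM'\subset H_\J$ is co-invariant for $M_\J$ one has $(P_{\fM'}M_x|_{\fM'})^*=M_\J^*|_{\fM'}$, and combining with the similarity above yields
\[
p(T|_\fM)=Y_\fM\,\bigl(\bar p(P_{\fM'}M_x|_{\fM'})\bigr)^*\,Y_\fM^{-1}.
\]
Both sides of this identity are WOT-continuous in the argument --- the conjugation $\phi\mapsto\bar\phi$ is implemented on $\M_d$ by an antilinear conjugation $C$ on $H^2_d$ via $M_{\bar\phi}=CM_\phi C$; adjunction is WOT-continuous on $B(\fM')$; and the $\M_d$-functional calculi for $T|_\fM$ and $P_{\fM'}M_x|_{\fM'}$ are weak-$*$ continuous by Corollary \ref{C:functcalccomm}. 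Since polynomials are weak-$*$ dense in $\M_d$, the identity extends to all $\phi\in\M_d$, proving the stated description of $\Ann_{\comm}(T|_\fM)$. The same description holds for $\fN$.

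Feeding the hypothesis $\Ann_{\comm}(T|_\fM)=\Ann_{\comm}(T|_\fN)$ into these two descriptions yields $\Ann_{\comm}(P_{\fM'}M_x|_{\fM'})=\Ann_{\comm}(P_{\fN'}M_x|_{\fN'})$, so Lemma \ref{L:anncoinvcomm} forces $\fM'=\fN'$. Injectivity of $X$ together with $\fM,\fN\subset\ran X$ then gives $\fM=X(\fM')=X(\fN')=\fN$. I expect the main obstacle to be the bookkeeping forced by the fact that $M_\J^*$ is not itself a row contraction: every functional calculus statement must be routed through the genuine pure commuting row contraction $P_{\fM'}M_x|_{\fM'}$ via the coefficient-conjugation twist, and the extension from polynomials to all of $\M_d$ must be justified by WOT/weak-$*$ continuity at each step.
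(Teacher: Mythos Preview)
Your proposal is correct and follows essentially the same route as the paper: pull back $\fM,\fN$ through $X$ to co-invariant subspaces of $H^2_d$, use a coefficient-conjugation operator (the paper calls it $J$, you call it $C$) to convert the $M_\J^*$-intertwining into a statement about annihilators of compressions of $M_x$, extend from polynomials to $\M_d$ by weak-$*$ continuity, and conclude via Lemma~\ref{L:anncoinvcomm}. The only cosmetic difference is that you invert $Y_\fM$ via the open mapping theorem and write a similarity, whereas the paper keeps the bare intertwining $\phi(T|_\fM)Z=Z\bigl((JM_\phi J)^*|_{X^{-1}\fM}\bigr)$ and never needs boundedness of the inverse.
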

\begin{proof}
If $f\in X^{-1}\fM$, then for every $1\leq k\leq d$ we obtain
	\[
		X (M^*_{x_k}|_{H_\J}) f=T_k X f\in T_k \fM\subset \fM.
	\]
	Thus $X^{-1}\fM$ is invariant for $M_\J^*$ and hence for $M_x^*$. An identical argument shows that $X^{-1}\fN$ is also invariant for $M_x^*$.
	Next, let 
	\[
		Z=X|_{X^{-1}\fM}:X^{-1}\fM\to \fM
	\]
	\[
	 Y=X|_{X^{-1}\fN}:X^{-1}\fN\to \fN.
	\]
	Because $X$ is injective, it follows that $Z$ and $Y$ are injective as well. Moreover, $Z$ and $Y$ are surjective since $\fM,\fN$ are contained in the range of $X$.
	For $k=1,\dots,d$, we note that
	\[
		(T_k|_{\fM})Z=Z(M_{x_k}^*|_{X^{-1}\fM}) \qand (T_k|_{\fN})Y=Y(M_{x_k}^*|_{X^{-1}\fN}).
	\]
	Let $J:H^2_d\to H^2_d$ be the conjugate linear, anti-unitary operator such that $Jx^\alpha=x^\alpha$ for every $\alpha\in \bN^d$. Then, we note that
	\[
		(T_k|_{\fM})Z=Z((JM_{x_k}J)^*|_{X^{-1}\fM}) \qand (T_k|_{\fN})Y=Y((JM_{x_k}J)^*|_{X^{-1}\fN})
	\]
	for every $k=1,\ldots,d$. 	A standard approximation procedure  yields that $J\M_d J=\M_d$ and
	\[
		\phi(T|_{\fM})Z=Z((JM_\phi J)^*|_{X^{-1}\fM}) \qand \phi(T|_{\fN})Y=Y((JM_\phi J)^*|_{X^{-1}\fN})
	\]
	for every $\phi\in \M_d$. Observe then that $\phi\in \Ann_{\comm}(T|_\fM)$ if and only if 
	\[
	P_{X^{-1}\fM}(JM_\phi J)|_{X^{-1}\fM}=0
	\]
	which in turn is equivalent to
	\[
	JM_\phi J\in \Ann_{\comm}(P_{X^{-1}\fM}M_x|_{X^{-1}\fM}).
	\]
	Thus, we find
	\[
	\Ann_{\comm}(P_{X^{-1}\fM}M_x|_{X^{-1}\fM})=J \Ann_{\comm}(T|_\fM) J.
	\]
	Likewise, we infer that 
	\[
	\Ann_{\comm}(P_{X^{-1}\fN}M_x|_{X^{-1}\fN})=J \Ann_{\comm}(T|_\fN) J.
	\]
	By assumption, we may therefore write
	\[
	\Ann_{\comm}(P_{X^{-1}\fM}M_x|_{X^{-1}\fM})=\Ann_{\comm}(P_{X^{-1}\fN}M_x|_{X^{-1}\fN}).
	\]
	In view of Lemma \ref{L:rigidcoinv}, we then know that $X^{-1}\fM=X^{-1}\fN$.
	Applying $X$ on both sides yields $\fM=\fN$.
\end{proof}

In general, it is not clear how to determine that $\Q(M_\J^*,T)$ is non-empty in order to apply the previous result. In some cases however, the existence of a cyclic vector can be exploited. 

\begin{corollary}\label{C:rigidity2}
	Let $T=(T_1,\ldots,T_d)$ be a nilpotent commuting row contraction on some Hilbert space $\fH$. Assume that $T^*$ is cyclic. Let $\fM,\fN\subset \fH$ be invariant subspaces for $T$ such that $\Ann_{\comm}(T|_\fM)=\Ann_{\comm}(T|_\fN)$. Then, $\fM=\fN$.
\end{corollary}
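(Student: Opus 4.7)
The plan is to derive this from Theorem \ref{T:rigidityQinv} applied to a rescaling $sT$ of $T$, with an intertwiner $X\in\Q(M_\J^*,sT)$ obtained by dualizing Corollary \ref{C:cyclicann}. Because $T^*$ is cyclic and $T$ is nilpotent, $\fH$ is finite-dimensional, so $X$ can be arranged to be a bijection and the range condition $\fM,\fN\subseteq\ran X$ is then automatic.

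Concretely, pick $s\in(0,1)$ small enough that $sT^*$ is a commuting row contraction on $\fH$ --- possible in finite dimension. Then $sT^*$ is a pure (by nilpotence), cyclic, commuting row contraction, so Corollary \ref{C:cyclicann}, with $\J:=\Ann_{\comm}(sT^*)\subseteq\M_d$, provides an injective operator $Y:H_\J\to\fH$ with dense range satisfying $YM_{\J,k}=sT_k^*Y$ for each $k$. Finite dimensionality forces $Y$ to be bijective; taking adjoints gives $M_{\J,k}^*Y^*=sY^*T_k$, and the operator $X:=(Y^*)^{-1}:H_\J\to\fH$ is then a bijection with $XM_{\J,k}^*=sT_kX$, so that $X\in\Q(M_\J^*,sT)$.

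To invoke Theorem \ref{T:rigidityQinv} for the pure commuting row contraction $sT$ (for which $\fM$ and $\fN$ are the same invariant subspaces as for $T$), it remains only to verify that $\Ann_{\comm}(sT|_\fM)=\Ann_{\comm}(sT|_\fN)$. For any nilpotent commuting row contraction $A$ on a finite-dimensional space, $A^\alpha=0$ once $|\alpha|$ is large, so $\phi(A)$ reduces to the finite sum $\sum_\alpha c_\alpha A^\alpha$ in the series coefficients of $\phi\in\M_d$; consequently $\Ann_{\comm}(A)$ is entirely determined by the polynomial annihilator $I(A):=\Ann_{\comm}(A)\cap\bC[x_1,\ldots,x_d]$. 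The identity $p(sA)=p_s(A)$ with $p_s(x):=p(sx)$ gives $I(sA)=\{p:p_s\in I(A)\}$, and since $p\mapsto p_s$ is a linear bijection of $\bC[x_1,\ldots,x_d]$, the equality $I(T|_\fM)=I(T|_\fN)$ extracted from the hypothesis transfers to $I(sT|_\fM)=I(sT|_\fN)$, hence to $\Ann_{\comm}(sT|_\fM)=\Ann_{\comm}(sT|_\fN)$. Theorem \ref{T:rigidityQinv} then yields $\fM=\fN$. The most delicate step is this transfer of the annihilator hypothesis under rescaling; everything else is a routine dualization.
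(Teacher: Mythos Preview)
Your proof is correct and follows the same overall architecture as the paper's---dualize, apply Corollary~\ref{C:cyclicann} to the adjoint side, then feed the resulting intertwiner into Theorem~\ref{T:rigidityQinv}---but it handles the obstacle that $T^*$ need not be a row contraction differently. The paper invokes an external similarity result of Popescu (\cite[Theorem~3.8]{popescu2014sim}) to find an invertible $Y$ with $YT^*Y^{-1}$ a row contraction; this lets it build an element of $\Q(M_\J^*,T)$ directly and apply Theorem~\ref{T:rigidityQinv} to $T$ itself, so the annihilator hypothesis needs no adjustment. You instead rescale, which is more elementary and self-contained, at the cost of the extra step transferring $\Ann_{\comm}(T|_\fM)=\Ann_{\comm}(T|_\fN)$ to $\Ann_{\comm}(sT|_\fM)=\Ann_{\comm}(sT|_\fN)$. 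That transfer is handled correctly via the polynomial annihilators and the bijection $p\mapsto p(s\,\cdot)$, using nilpotence to reduce the multiplier annihilator to its polynomial part (as in Theorem~\ref{T:gleasontrick}). Your route trades a black-box citation for a short explicit computation; both buy the same conclusion.
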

\begin{proof}
	The commuting $d$-tuple $T^*$ is also nilpotent.
	Applying \cite[Theorem 3.8]{popescu2014sim} with $k=m=1$, $\Q=\{0\}$ and $f(Z)=Z_1+\ldots+Z_d$ (in the notation of that paper), we see that there is an invertible operator $Y\in B(\fH)$ such that the $d$-tuple
	\[
	YT^{*}Y^{-1}=(YT_1^{*}Y^{-1},\ldots,YT_d^{*}Y^{-1})
	\]
	is a nilpotent cyclic commuting row contraction. In particular, $YT^{*}Y^{-1}$ is pure, so if we put $\J=\Ann_{\comm}(YT^{*}Y^{-1})$, then  by virtue of Corollary \ref{C:cyclicann}, we see that there is $X \in \Q(M_\J, YT^{*}Y^{-1})$.
	Next, observe that $\fH$ is finite-dimensional since the nilpotent $d$-tuple $T^*$ is cyclic, whence $X $ is invertible and $(X ^{-1}Y)^*\in \Q(M_\J^*,T)$. Theorem \ref{T:rigidityQinv} then yields $\fM=\fN$.
\end{proof}

It is not clear that Theorem \ref{T:NilpNoPropSub} could be derived from the previous result, as it is not typically true that the adjoint of a cyclic nilpotent row contraction is still cyclic; see Example \ref{E:maxcount} below.

\section{Invariant decompositions and separating vectors}\label{S:cyclicdecomp}

We saw in Corollary \ref{C:cyclicann} that pure commuting row contractions can be effectively classified using compressions of the standard commuting $d$-shift, provided that they admit a cyclic vector. Inspired by the successful univariate theory developed in \cite{bercovici1988}, a natural subsequent step in this classification program would involve moving past the setting of cyclicity using the following device. 

Let $T=(T_1,\ldots,T_d)$ be a $d$-tuple of operators on some Hilbert space $\fH$ and let $\fM,\fN\subset \fH$ be non-trivial invariant subspaces for $T$. We say that the pair $(\fM,\fN)$ is an \emph{invariant decomposition} for $T$ if $\fM\cap \fN=\{0\}$ and $\ol{\fM+\fN}=\fH$. In the special case where $T|_{\fM}$ is cyclic, we say that $(\fM,\fN)$ is a \emph{cyclic decomposition} for $T$.

In the single-variable case, non-cyclic constrained contractions always admit cyclic decompositions \cite[Theorem III.3.1]{bercovici1988}. We start this section with an example illustrating that cyclic decompositions are more elusive in several variables. For this purpose, we introduce one more notion. An invariant decomposition $(\fM,\fN)$ will  be said to be \emph{topological} if the stronger condition $\fM+\fN=\fH$ is satisfied. The following is an adaptation of \cite[Theorem 2.3]{griffith1969}

\begin{example}\label{E:FromGriff}
	Let $\{e_n:n\in \bN\}$ denote the canonical orthonormal basis of $\ell^2(\bN)$. Let $\fH=\ell^2(\bN)\oplus \ell^2(\bN)$. For each $n\in \bN$, define vectors  $\xi_n,\eta_n\in\fH$ as
	\[ \xi_n=e_n\oplus 0, \quad \eta_n=0\oplus e_n. \]
	Then, $\{\xi_n,\eta_n:n\in \bN\}$ is an orthonormal basis for $\fH$. We may define bounded linear operators $T_1,T_2$ on $\fH$ as
	\[
	T_1=\sum_{n=1}^\infty \frac{1}{\sqrt{2}}\eta_{n+1}\otimes \xi_n,
	\]
	\[
	T_2=\sum_{n=1}^\infty \frac{1}{\sqrt{2}}\eta_n\otimes \xi_n
	\]
	where the series converge in the strong operator topology of $B(\fH)$. 
	We find
	\[
	2T_1T_1^*=\sum_{n=1}^\infty \eta_{n+1}\otimes \eta_{n+1}
	\]
	and
	\[
	2T_2T_2^*=\sum_{n=1}^\infty \eta_{n}\otimes \eta_{n}.
	\]
	Thus, $2T_1T_1^*$ and $2T_2T_2^*$ are orthogonal projections. In particular, we see that $T_1$ and $T_2$ are scalar multiples of partial isometries, and thus have closed range. Moreover,
	\[
	T_1T_1^*+T_2T_2^*\leq I
	\]
	so that $T=(T_1,T_2)$ is a row contraction on $\fH$. Since $\xi_n$ is orthogonal to $\eta_m$ for every $n,m\in \bN$, we see that 
	\[
	T_1^2=T_1T_2=T_2T_1=T_2^2=0.
	\]
	Thus, $T$ is a nilpotent commuting row contraction. Using Theorem \ref{T:gleasontrick} it is readily seen that $\Ann_{\comm}(T)$ is the weak-$*$ closed ideal of $\M_2$ generated by $\{x_1^2,x_1x_2,x_2^2\}$. We infer that the range of $ T^\beta$ is closed for all $\beta\in\bN^2$.

	Assume that $\fM,\fN\subset \fH$ are closed invariant subspaces for $T$ with $\fM\cap \fN=\{0\}$ and $\fH=\fM+\fN$. 
	Observe that 
	\[
	T_1\fH=\oplus_{n=2}^\infty \bC \eta_n
	\]
	and
	\[
	T_2\fH=\oplus_{n=1}^\infty \bC \eta_n.
	\]
	Furthermore, we have that 
	\[
	T_1\fH=T_1\fM+T_1\fN, \quad T_2\fH=T_2\fM+T_2\fN
	\]
	and
	\[
	T_1\fM\subset \fM, \quad T_1 \fN\subset \fN,
	\]
	\[
	T_2\fM\subset \fM, \quad T_2 \fN\subset \fN.
	\] 
	Using that $\fM\cap \fN=\{0\}$ and that $T_1\fH\subset T_2\fH$, we infer that  $T_1\fM\subset T_2\fM$ and $T_1\fN\subset T_2\fN$. Since $T_1\fH$ is a co-dimension one closed subspace of $T_2\fH$, we infer that either $T_1\fM=T_2\fM$ or $T_1\fN=T_2\fN$. By symmetry, we may suppose the former.

	Given a subspace $\fK\subset \fH$, we set
	\[ \nu_\xi(\fK) = \inf\{n\in\bN: \xi_n\notin\fK^\perp\} \]
	and
	\[ \nu_\eta(\fK) = \inf\{n\in\bN: \eta_n\notin \fK^\perp\}.\]
	Note that $\nu_\xi(\fK)$ is infinite whenever $\xi_n\in \fK^\perp$ for every $n\in \bN$. Using that $T_1^*\eta_{n+1}=\xi_n$ and $T_2^*\eta_n=\xi_n$ for every $n\in \bN$, we see that 
	\[
	\nu_\eta(T_2\fK)=\nu_\xi(\fK) \qand \nu_\eta(T_1\fK)= \nu_\xi(\fK)+1.
	\]
	Recall that $T_1\fM=T_2\fM$, thus
	\[
	\nu_\xi(\fM)=\nu_\eta(T_2\fM)=\nu_\eta(T_1\fM)=\nu_\xi(\fM)+1
	\]
	which forces $\nu_\xi(\fM)=+\infty$. Therefore, $\xi_n\in \fM^\perp$ for every $n\in \bN$.
	Consequently, we find
	\[
	\fM\subset \oplus_{n=1}^\infty \bC \eta_n=T_2\fH.
	\]
	We infer that $\fM\subset T_2\fM$ which further implies that $T_2\fM=\fM$.
	Finally, using that $T^2_2=0$ we obtain $\fM=\{0\}$. This shows that $T$ does not admit a topological invariant decomposition. 
	
	If $(\fM,\fN)$ is a cyclic decomposition for $T$, then using that $T$ is nilpotent and that $T|_\fM$ is cyclic we conclude that $\fM$ must be finite-dimensional. It is well-known then that $\fH=\fM+\fN$, so that $(\fM,\fN)$ would be a topological invariant decomposition for $T$. We thus conclude that $T$ admits no cyclic decomposition either.
	\qed
\end{example}

We mention here a classical result of Apostol and Stampfli \cite[Theorem 5]{AS1976} which implies that if $T$ is a nilpotent operator on a Hilbert space $\fH$ with the property that $T^n \fH$ is closed for every $n\in \bN$, then $T$ admits a topological cyclic decomposition.  The previous example shows that this theorem fails in several variables.

In spite of Example \ref{E:FromGriff}, by leveraging the work done previously on rigidity of invariant subspaces we identify a sufficient condition that allows for an invariant decomposition.

\begin{theorem}\label{T:splitting}
	Let $T=(T_1,\ldots,T_d)$ be a nilpotent commuting row contraction on some Hilbert space $\fH$.
	Let $\fM\subset \fH$ be an invariant subspace for $T$ such that $(T|_{\fM})^*$ has a cyclic vector and $\Ann_{\comm}(T|_\fM)=\Ann_{\comm} (T)$.
	Then, there is a subspace $\fN\subset \fH$ that is invariant for $T$ such that $\fM\cap \fN=\{0\}$ and $\fH=\ol{\fM+\fN}$.
\end{theorem}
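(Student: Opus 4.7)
The plan is to construct $\fN$ as the orthogonal complement in $\fH$ of a carefully chosen $T^*$-invariant subspace, using the adjoint cyclic vector provided by the hypothesis. Let $\xi\in\fM$ be cyclic for $(T|_\fM)^*$. Since $T^*$ is nilpotent, only finitely many of the vectors $(T|_\fM)^{*\alpha}\xi$ are nonzero, so $\fM=\spn\{(T|_\fM)^{*\alpha}\xi:\alpha\in\bN^d\}$ is finite-dimensional. Define
\[
\fK=\ol{\spn\{T^{*\alpha}\xi:\alpha\in\bN^d\}}\subset\fH,
\]
which is $T^*$-invariant by construction, and set $\fN=\fK^\perp$, which is then $T$-invariant.

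The key technical tool is the intertwining $P_\fM T_k^*=(T_k|_\fM)^* P_\fM$ for each $k$, immediate from the block decomposition $\fH=\fM\oplus\fM^\perp$ (in which each $T_k$ is block upper-triangular since $\fM$ is $T$-invariant). Iterating yields $P_\fM p(T^*)\xi=p((T|_\fM)^*)\xi$ for every polynomial $p\in\bC[x_1,\ldots,x_d]$. The condition $\fM\cap\fN=\{0\}$ now follows quickly: if $v\in\fM\cap\fN$, then $\langle T^\alpha v,\xi\rangle=0$ for all $\alpha\in\bN^d$, and since $T^\alpha v\in\fM$ this rewrites as $\langle v,(T|_\fM)^{*\alpha}\xi\rangle=0$, forcing $v=0$ by cyclicity of $\xi$.

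For the density $\ol{\fM+\fN}=\fH$, equivalently $\fK\cap\fM^\perp=\{0\}$, it suffices to show that $P_\fM$ is injective on $\fK$. Suppose $P_\fM p(T^*)\xi=0$ for some polynomial $p$; then $p((T|_\fM)^*)\xi=0$ by the intertwining, and since $p((T|_\fM)^*)$ commutes with each $(T_j|_\fM)^*$, cyclicity of $\xi$ forces $p((T|_\fM)^*)=0$. Equivalently, the conjugate polynomial $\bar p$ lies in $\Ann_{\comm}(T|_\fM)$. Invoking the hypothesis $\Ann_{\comm}(T|_\fM)=\Ann_{\comm}(T)$, we obtain $\bar p(T)=0$, hence $p(T^*)=0$ and in particular $p(T^*)\xi=0$. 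Thus $P_\fM$ is injective on the algebraic span $V_0=\{p(T^*)\xi:p\in\bC[x_1,\ldots,x_d]\}$; since $P_\fM V_0\subset\fM$ is finite-dimensional, $V_0$ itself is finite-dimensional, hence closed, and therefore equal to $\fK$. Injectivity of $P_\fM|_\fK$ then delivers $\fK\cap\fM^\perp=\{0\}$.

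The main obstacle is transferring the annihilator equality, which is a statement about $T$, into a statement about $T^*$ so that it can interact with the cyclicity of $(T|_\fM)^*$. This is handled by the polynomial identity $p(T)^*=\bar p(T^*)$ combined with the equality $\Ann_{\comm}(T|_\fM)=\Ann_{\comm}(T)$; once this bridge is in place, everything else is a clean application of the block-triangular intertwining and the definition of $\fK$.
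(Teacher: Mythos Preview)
Your proof is correct, and it follows the same overall blueprint as the paper---choose the same subspace $\fK$ generated by $T^{*\alpha}\xi$, set $\fN=\fK^\perp$, and verify the two conditions by analyzing $P_\fM|_\fK$---but the crucial step is handled differently. To show $\fK\cap\fM^\perp=\{0\}$, the paper introduces the compression $S=P_\fK T|_\fK$, observes that $X\fM$ (where $X=P_\fK|_\fM$) is an $S$-invariant subspace of $\fK$ with $\Ann_{\comm}(S|_{X\fM})=\Ann_{\comm}(S)$, and then invokes the rigidity result Corollary~\ref{C:rigidity2} (which in turn rests on Popescu's similarity theorem and the quasi-affine transform machinery of Section~\ref{S:Quasi}) to force $X\fM=\fK$. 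You instead argue directly: from $P_\fM p(T^*)\xi=0$ you deduce $p((T|_\fM)^*)=0$ by cyclicity, transfer this via conjugation and the annihilator hypothesis to $p(T^*)=0$, and then use finite-dimensionality of $\fM$ to conclude that the algebraic span $V_0$ is already closed. Your route is more elementary and entirely self-contained, avoiding the rigidity corollary altogether; the paper's route, by contrast, serves to showcase Corollary~\ref{C:rigidity2} as a working tool, which is thematically consistent with Section~\ref{S:rigidity}.
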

\begin{proof}
	By assumption, there is a vector $\xi\in \fM$ which is cyclic for $(T|_\fM)^*$. Let 
	\[
		\fK=\ol{\spn\{T^{*\alpha}\xi:\alpha\in \bN^d\}}.
	\]
	Put $S=P_{\fK} T|_{\fK}$ and observe that $\Ann_{\comm}(T)\subset \Ann_{\comm}(S)$. Define $X:\fM\to \fK$ as $X=P_{\fK}|_{\fM}$. Using that $\fK$ is coinvariant for $T$ and $\fM$ is invariant for $T$, we obtain
	\begin{align*}
		S_kX&= P_{\fK}T_kP_{\fK}|_{\fM}=P_{\fK}T_k|_{\fM}\\
		&=P_{\fK}P_{\fM}T_k|_{\fM}=X T_k|_{\fM}
	\end{align*}
	for every $1\leq k\leq d$, so that for $\alpha\in \bN^d$ we have
	\[
		X^*T^{*\alpha}|_{\fK}=X^*S^{*\alpha}=(T|_\fM)^{*\alpha} X^*.
	\]
	Furthemore, 
	\[
	X^*\xi=P_{\fM}\xi=\xi
	\]
	whence
	\begin{align*}
		 X^*T^{*\alpha}\xi=(T|_\fM)^{*\alpha} X^*\xi=(T|_\fM)^{*\alpha} \xi
	\end{align*}
	for every $\alpha\in \bN^d$.
	By choice of $\xi$ being cyclic for $(T|_\fM)^*$, we conclude that $X^*$ has dense range and thus $X$ is injective.
	Now, $X\fM\subset \fK$ is easily seen to be invariant for $S$ and
	\[
	(S_k|_{X \fM})X=X T_k|_{\fM}, \quad 1\leq k\leq d
	\]
	so it follows from Lemma \ref{L:anntransf} that
	\[
		\Ann_{\comm}(S|_{X\fM})\subset \Ann_{\comm}(T|_{\fM}).
	\]
	By assumption, we then infer that
	\[
	\Ann_{\comm}(S|_{X\fM})\subset\Ann_{\comm} (T)
	\]
	and thus
	\[
	\Ann_{\comm}(S|_{X\fM})\subset\Ann_{\comm}(S).
	\]
	We conclude that 
	\[
	\Ann_{\comm}(S|_{X\fM})=\Ann_{\comm} (S).
	\]
	Notice now that $S^*$ is cyclic by the construction of $\fK$.
	We may thus invoke Corollary \ref{C:rigidity2} to find $X\fM=\fK$, so that $X^*$ is injective.
	Finally, define $\fN=\fK^\perp$. We find
	\[
		\fM\cap \fN=\fM\cap \fK^\perp=\ker X=\{0\}
	\]
	and
	\[
		\ol{\fM+\fN}=(\fM^\perp \cap \fN^\perp)^\perp=(\fM^\perp\cap \fK)^\perp=(\ker X^*)^\perp=\fH. \qedhere
	\]
\end{proof}

One shortcoming of the previous theorem is that the existence of the decomposition  hinges on the \emph{adjoint} of $T|_\fM$ being cyclic. This condition is a bit mysterious.  Interestingly, in the case of a single constrained contraction, this condition is in fact equivalent to $T|_\fM$ being cyclic by \cite[Theorem III.2.3]{bercovici1988}. Unfortunately, this convenient equivalence does not hold in the multivariate world, as we will see in Example \ref{E:maxcount}.

Given these multivariate obstacles, and in light of  the univariate mechanism  \cite[Theorem III.3.1]{bercovici1988} for producing cyclic decompositions, it seems worthwhile to investigate more precisely \emph{where} this mechanism fails in several variables. The rest of the section will be devoted to this goal, through the lens of the following notion.

Let $T=(T_1,\ldots,T_d)$ be a constrained commuting row contraction on some Hilbert space $\fH$. A subset $\Sigma\subset \fH$ is said to be \emph{separating} for $T$ if, given any $\theta\in \M_d$, the  condition $\Sigma\subset \ker \theta(T)$ forces $\theta(T)=0$. This means that $\Sigma$ is a separating set for the operator algebra 
\[
\{\theta(T):\theta\in \M_d\}.
\]
When $\Sigma$ consists of only one element $\xi\in \fH$, we say that $\xi$ is a \emph{separating vector} for $\fH$. 
In the classical one-variable setting, separating vectors are called \emph{maximal} \cite{bercovici1988}, but we feel that in our context our choice of terminology is a bit more descriptive.

We start with a concrete example where a separating vector can be constructed explicitly. 

\begin{example}\label{Ex:Rectangle}
	Let $T=(T_1,\dots,T_d)$ be a pure commuting row contraction on some Hilbert space $\fH$. Assume that $\Ann_{\comm}(T)$ is the weak-$*$ closed ideal of $\M_d$ generated by $\{x_1^{n_1},\ldots,x_d^{n_d}\}$ for some fixed positive integers $n_1,\ldots,n_d$. 	Set
	\[ \Omega=\{\alpha\in\bN^d:x^\alpha\nin \Ann_{\comm}(T)\} \]
	and put
	\[
	\gamma=(n_1-1,\dots,n_d-1).
	\]
Since $\gamma\in \Omega$, we may choose a non-zero vector $\xi\in \fH\bksl \ker(T^\gamma)$. We claim that $\xi$ is separating for $T$.	To see this, let $\phi\in \M_d$ with $\phi(T)\neq 0$. An application of Theorem \ref{T:gleasontrick} with 
\[
n>(n_1-1)+(n_2-1)+\ldots+(n_d-1)
\]
shows that there is a polynomial $p$ such that $\phi-p\in\Ann_{\comm}(T)$, whence $\phi(T)=p(T)$. Furthermore, it is clear that $p$ can be chosen to be of the form
	\[
	p=\sum_{\alpha\in\Omega}c_\alpha x^\alpha .
	\]
	Because $p(T)\neq 0$, the set $S=\{\alpha\in\Omega:c_\alpha\neq 0\}$ is non-empty. Choose $\mu\in S$ with minimal length. It is easily seen that there is $\beta\in \bN^d $ such that $\mu+\beta=\gamma$ and $\alpha+\beta\notin \Omega$ for every $\alpha\in S\setminus \{\mu\}$. 
	Thus 
	\[
	T^\beta p(T)=c_\mu T^\gamma.
	\]
	Since $\xi\notin \ker T^\gamma$ and $c_\mu\neq 0$, it follows that $T^\beta p(T)\xi\neq 0$ and in particular $p(T)\xi\neq 0$. Therefore, $\phi(T)\xi\neq 0$, and we conclude that $\xi$ is separating for $T$.
	
	\qed
\end{example}

Unfortunately, separating vectors do not always exist, even for commuting nilpotent pairs on finite-dimensional spaces.

\begin{example}\label{E:maxcount}
	With respect to the usual orthonormal basis, we define linear operators on $\bC^3$ as
	\[ T_1 = \frac{1}{\sqrt{3}}\begin{bmatrix} 0 & 0 & 0 \\ 1 & 0 & -1 \\ 0 & 0 & 0 \end{bmatrix},
		\quad
	   T_2 = \frac{1}{\sqrt{3}}\begin{bmatrix} 0 & 0 & 0 \\ 0 & 0 & 1 \\ 0 & 0 & 0 \end{bmatrix}.
	\]
	Easy computations show that 
	\[
	T_1^2=T_1T_2=T_2T_1=T_2^2=0
	\]
	and that
	\[ T_1T_1^* + T_2T_2^* = \begin{bmatrix} 0 & 0 & 0 \\ 0 & 1 & 0 \\ 0 & 0 & 0\end{bmatrix}. \]
	Hence, $T=(T_1,T_2)$ is a nilpotent commuting row contraction.
	Let $\phi\in \M_2$. An application of Theorem \ref{T:gleasontrick} yields constants  $a,b,c\in \bC$ and multipliers $\phi_{12},\phi_{11},\phi_{22}\in \M_2$ such that
	\[
	\phi=a+bx_1+cx_2+\phi_{11}x_1^2+\phi_{12}x_1 x_2+\phi_{22}x_2^2.
	\]
	Then, we find
	\[ \phi(T_1,T_2)=a I+ bT_1+ cT_2 = 
	\begin{bmatrix} a& 0 & 0 \\ b/\sqrt{3} & a & (c-b)/\sqrt{3} \\ 0 & 0 & a \end{bmatrix}.
	 \]
	 Thus, $\phi(T_1,T_2)=0$ if and only if $a=b=c=0$.
	This shows that $\Ann_{\comm}(T)$ is the weak-$*$ closed ideal of $\M_2$ generated by $\{x_1^2,x_1x_2,x_2^2\}$. 
	
	We claim that no vector $v\in \bC^3$ is separating  for $T$. Indeed, write $v=(v_1,v_2,v_3)\in \bC^3$, and consider the polynomial $p$ defined as
	\[ 
	p= \begin{cases}
	 v_3x_1+(v_3-v_1)x_2 & \text{if }v_3\neq 0\text{ or }v_1\neq 0, \\
	x_1 & \text{otherwise}. 
	\end{cases}
	\]
	Then $p(T)v=0$ yet $p(T)\neq 0$, which means that $v$ is not separating. This establishes the claim that $T$ has no separating vector, and thus no cyclic vector either. Note however that the vector $(0,1,0)$ is easily verified to be cyclic for the pair $T^*=(T_1^*,T_2^*)$. Anticipating Theorem \ref{T:AmpliateMax} below, we also remark that the set $\{(1,0,0),(0,0,1)\}$ is separating for $T$. 
	\qed
\end{example}

Interestingly, separating vectors for single constrained contractions are known to exist in abundance \cite[Theorem II.3.6]{bercovici1988}. Inspection of the proofs of the results leading up to \cite[Theorem III.3.1]{bercovici1988} reveals that separating vectors play a crucial role there. Thus, the difficulties associated to the construction of cyclic decompositions in the multivariate context may be explained, in part, by the scarcity of separating vectors brought to light in Example \ref{E:maxcount}.

Another consequence of Example \ref{E:maxcount} is that nilpotent commuting $d$-tuples are not as nicely behaved as their single variable counterparts. For instance, by \cite[Theorem 1]{ADF1976} it is known that if $T$ is a nilpotent contraction, then there is a collection $\{T_\lambda:\lambda\in \Lambda\}$ of \emph{cyclic} nilpotent contractions such that
\[
\oplus_{\lambda\in \Lambda}T_\lambda \prec T.
\]
Trivially, a cyclic vector is necessarily separating, so the following lemma implies that such an operator $T$ must admit a separating vector. In light of the previous example, we conclude that a direct analogue of \cite[Theorem 1]{ADF1976} cannot hold in several variables.

\begin{lemma}\label{L:sep}
The following statements hold.
\begin{enumerate}

\item[\rm{(1)}] Let $T=(T_1,\ldots,T_d)$ and $T'=(T'_1,\ldots,T_d')$ be constrained commuting row contractions on some Hilbert spaces $\fH$ and $\fH'$. Assume that $T'\prec T$. If $T'$ has a separating vector, then so does $T$.

\item[\rm{(2)}] For each $n\in \bN$, let $T_n$  be a constrained commuting row contraction on some Hilbert space $\fH_n$. Assume that $T_n$ has a separating vector for every $n\in \bN$. Then, $\oplus_{n=1}^\infty T_n$ has a separating vector as well.
\end{enumerate}
\end{lemma}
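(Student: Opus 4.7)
For (1), the plan is to transport the separating vector along the intertwining map. Let $X:\fH'\to \fH$ be an injective bounded operator with dense range satisfying $XT'_k=T_kX$ for $1\leq k\leq d$, and let $\xi'\in\fH'$ be separating for $T'$. I would set $\xi=X\xi'$ and claim that $\xi$ is separating for $T$. The first input is that Lemma \ref{L:anntransf} combined with equation (\ref{Eq:ann}) yields the equality $\Ann_{\comm}(T)=\Ann_{\comm}(T')$. The second input is that the intertwining relation extends from polynomials to all of $\M_d$ via the weak-$*$ continuous calculus of Corollary \ref{C:functcalccomm}, giving $\phi(T)X=X\phi(T')$ for every $\phi\in\M_d$. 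So if $\phi(T)\neq 0$, then $\phi(T')\neq 0$, hence $\phi(T')\xi'\neq 0$ by the separating property of $\xi'$, and finally $\phi(T)\xi=X\phi(T')\xi'\neq 0$ by injectivity of $X$.

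For (2), the plan is to combine the separating vectors coordinate-wise after rescaling. I would first replace each $\xi_n$ by a non-zero scalar multiple (this preserves the separating property) so that $\sum_n \|\xi_n\|^2<\infty$, and then set $\xi=\oplus_n \xi_n\in \oplus_n \fH_n$. The key identity to establish is
\[
\phi(\oplus_n T_n)=\oplus_n \phi(T_n),\qquad \phi\in\M_d.
\]
Granting this, if $\phi(\oplus_n T_n)\neq 0$, then $\phi(T_n)\neq 0$ for some $n$, hence $\phi(T_n)\xi_n\neq 0$ by the separating property of $\xi_n$, and therefore $\phi(\oplus_n T_n)\xi\neq 0$, as required.

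The main technical step, and the one I would anticipate as the mild obstacle, is the identity above. This requires first checking that $\oplus_n T_n$ is itself a pure commuting row contraction (so that its $\M_d$-calculus is available): row-contractivity and commutativity pass to the sum trivially, and purity follows from the uniform bound $\sum_{|w|=k}\|T_{n,w}^* \xi_n\|^2\leq \|\xi_n\|^2$ combined with a standard dominated convergence argument. Each $\phi(T_n)$ is bounded by $\|M_\phi\|$ thanks to the contractivity in Corollary \ref{C:functcalccomm}, so the formal direct sum $\oplus_n \phi(T_n)$ defines a bounded operator on $\oplus_n \fH_n$. Both $\phi\mapsto \phi(\oplus_n T_n)$ and $\phi\mapsto \oplus_n\phi(T_n)$ are weak-$*$-to-WOT continuous (the latter by testing against finitely supported vectors and invoking uniform boundedness) and they agree on polynomials; the identity then extends to all of $\M_d$ by weak-$*$ density of polynomials in $\M_d$.
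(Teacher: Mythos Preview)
Your proposal is correct and follows essentially the same route as the paper. For (1) the paper pushes the separating vector forward via $X$ and uses Lemma~\ref{L:anntransf} together with \eqref{Eq:ann} exactly as you do; for (2) the paper likewise rescales the $\xi_n$ and forms their direct sum, simply asserting the identity $\theta(\oplus_n T_n)=\oplus_n \theta(T_n)$ as ``readily verified'' --- your careful justification of purity of the direct sum and of this identity via weak-$*$ density is a welcome elaboration of what the paper leaves implicit.
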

\begin{proof}
For (1), let $X:\fH'\to \fH$ be an injective operator with dense range such that $XT_k'=T_k X$ for every $1\leq k\leq n$. Let $\xi\in \fH'$ be a separating vector for $T'$ and let $\theta\in \M_d$. Assume that $\theta(T)X\xi=0$. Then
\[
0=\theta(T)X\xi=X\theta(T')\xi
\]
and we infer that $\theta(T')\xi=0$ since $X$ is injective. Using that $\xi$ is separating for $T'$, we thus find that $\theta(T')=0$. Next, Lemma \ref{L:anntransf} and Equation (\ref{Eq:ann}) imply that 
\[
\Ann_{\comm}(T)=\Ann_{\comm}(T')
\]
so that $\theta(T)=0$. Hence $X\xi$ is separating for $T$.

Turning to (2), for each $n\in\bN$ we let $\xi_n\in \fH_n$ be a separating vector for $T_n$ with norm $1$. Define $\xi\in \oplus_{n=1}^\infty \fH_n$ as
\[
\xi=\left(\xi_1,\frac{1}{2}\xi_2,\ldots,\frac{1}{2^n}\xi_n,\ldots \right).
\]
It is readily verified that if $\theta\in \M_d$, then
\[
\theta(T)=\oplus_{n=1}^\infty \theta(T_n)
\]
and
\[
\theta(T)\xi=\left(\theta(T_1)\xi_1,\frac{1}{2}\theta(T_2)\xi_2,\ldots,\frac{1}{2^n}\theta(T_n)\xi_n,\ldots \right).
\]
The desired statement readily follows from these observations.

\end{proof}

Our final result below offers a counterpoint to Example \ref{E:maxcount} and shows that, at least for  nilpotent commuting row contractions, the existence of separating vectors can be insured provided that we allow for finite ampliations. Given a $d$-tuple of operators $T=(T_1,\ldots,T_d)$ on some Hilbert space $\fH$ and a positive integer $n\in \bN$, we define
\[
T^{(n)}=T\oplus T\oplus \ldots \oplus T
\]
which acts on the Hilbert space
\[
\fH^{(n)}=\fH\oplus \fH\oplus \ldots \oplus \fH.
\]
\begin{theorem}
	Let $N=(N_1,\dots,N_d)$ be a nilpotent commuting  row contraction on some Hilbert space $\fH$. Set
	\[
	\A=\bC[x_1,\dots,x_d]/(\bC[x_1,\dots,x_d]\cap \Ann_{\comm}(N)).
	\]
	Then, for any positive integer $s\geq \dim \A$ the ampliation $N^{(s)}$ has a dense set of separating vectors in $\fH^{(s)}$.
	\label{T:AmpliateMax}
\end{theorem}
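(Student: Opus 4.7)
The plan is to reduce the separating condition for $N^{(s)}$ to a concrete finite-dimensional statement in $\A$, exhibit a single separating $s$-tuple via a dimension-counting construction, and then promote existence to density through a Gram-determinant argument on real affine lines. Since $N$ is nilpotent, there is some $n \in \bN$ with $N^\alpha = 0$ whenever $|\alpha| = n$, so every monomial $x^\alpha$ with $|\alpha| = n$ lies in $\Ann_{\comm}(N)$. Applying Theorem \ref{T:gleasontrick} at the origin, for every multiplier $\phi \in \M_d$ there is a polynomial $p$ of degree strictly less than $n$ with $\phi(N) = p(N)$, and consequently $\phi(N^{(s)}) = p(N)^{(s)}$. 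A tuple $\xi = (\xi_1,\dots,\xi_s) \in \fH^{(s)}$ is therefore separating for $N^{(s)}$ precisely when every polynomial $p$ satisfying $p(N)\xi_j = 0$ for each $j$ represents the zero class in $\A$. Writing $\Ann_\A(\xi_j) = \{[p] \in \A : p(N)\xi_j = 0\}$, this becomes the purely algebraic condition $\bigcap_{j=1}^s \Ann_\A(\xi_j) = \{0\}$ inside the finite-dimensional algebra $\A$.

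I next produce one such separating tuple of size $\dim \A$, from which the case $s > \dim \A$ follows by padding the extra slots with zero vectors (which contributes the whole algebra $\A$ to the intersection and leaves it unchanged). Pick any nonzero $\xi_1^\circ \in \fH$; then $[1] \notin \Ann_\A(\xi_1^\circ)$, so $\dim \Ann_\A(\xi_1^\circ) \leq \dim \A - 1$. Inductively, if $I_k = \bigcap_{j=1}^k \Ann_\A(\xi_j^\circ)$ is still nonzero, choose $[p] \in I_k \setminus \{0\}$; since $p(N) \neq 0$, there is some $\xi_{k+1}^\circ \in \fH$ with $p(N)\xi_{k+1}^\circ \neq 0$, whence $I_{k+1} \subsetneq I_k$ and $\dim I_{k+1} < \dim I_k$. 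After at most $\dim \A - 1$ further steps we reach $I_s = \{0\}$, so the tuple $\xi^\circ = (\xi_1^\circ,\dots,\xi_s^\circ)$ is separating.

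Finally, fix a basis $[p_1],\dots,[p_m]$ of $\A$, where $m = \dim \A$, and define
\[
f(\xi) = \det \left[\sum_{j=1}^s \langle p_i(N)\xi_j, p_{i'}(N)\xi_j \rangle\right]_{i,i'=1}^m, \qquad \xi \in \fH^{(s)}.
\]
The tuple $\xi$ is separating exactly when the $m$ vectors $(p_i(N)\xi_j)_{j=1}^s \in \fH^s$ are linearly independent, equivalently when $f(\xi) \neq 0$. Given an arbitrary $\eta \in \fH^{(s)}$, I consider the real affine line $\xi(t) = (1-t)\eta + t\xi^\circ$ for $t \in \bR$: each entry of the Gram matrix along this line is a real-quadratic polynomial in $t$, so $f(\xi(t))$ is a polynomial in $t$ of degree at most $2m$. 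Since $f(\xi(1)) = f(\xi^\circ) \neq 0$ by the previous paragraph, this polynomial is not identically zero, has only finitely many real roots, and therefore $f(\xi(t)) \neq 0$ for $t$ arbitrarily close to zero, producing separating tuples arbitrarily close to $\eta$. The main delicacy is precisely this density upgrade: mere continuity of $f$ yields only openness of the separating set, and it is the low-degree polynomial structure along real lines that forces density around every point.
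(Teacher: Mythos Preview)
Your proof is correct. The reduction to polynomials via Theorem~\ref{T:gleasontrick}, the reformulation of ``separating'' as $\bigcap_j \Ann_\A(\xi_j)=\{0\}$, and the dimension-drop construction of a single separating $s$-tuple all match the paper's argument essentially verbatim.

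Where you diverge is in the passage from \emph{existence} to \emph{density}. The paper obtains both at once: in the inductive step it observes that $\ker p_{i+1}(N)$ is a proper closed subspace, so its complement meets every nonempty open set, and one can therefore choose each $\xi_i$ from a prescribed open set $U_i\subset\fH$. This shows directly that every basic open set $U_1\times\cdots\times U_\delta\subset\fH^{(\delta)}$ contains a separating vector. Your route instead fixes one separating tuple $\xi^\circ$ and then argues density via the Gram determinant $f$: along the real segment $t\mapsto (1-t)\eta+t\xi^\circ$ the function $f$ is a genuine polynomial of degree at most $2m$ (because $t$ is real, so sesquilinearity causes no trouble), nonvanishing at $t=1$, hence nonvanishing for $t$ arbitrarily close to $0$. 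Both arguments are elementary; the paper's is shorter and avoids introducing the auxiliary function $f$, while your Gram-determinant device has the pleasant side effect of making explicit that the separating set is open (as the nonvanishing locus of a continuous function) and would transfer unchanged to any setting where the obstruction is the zero set of a finite-degree ``polynomial'' along lines.
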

\begin{proof}	

	Since $N$ is nilpotent, there are positive integers $m_1,\ldots,m_d$ such that $N_k^{m_k}=0$ for every $1\leq k\leq d$.
	An application of Theorem \ref{T:gleasontrick} with 
	\[
	n>(m_1-1)+(m_2-1)+\ldots+(m_d-1)
	\]
	shows that for any $\phi\in\M_d$, there is a polynomial $p$ such that $\phi(N)=p(N)$.
	For this reason, a vector $(\xi_1,\dots,\xi_s)\in \fH^{(s)}$ is separating for $N^{(s)}$ if whenever $p$ is a polynomial, the equalities 
	\[
	p(N)\xi_1=\dots=p(N)\xi_s=0
	\]
	are equivalent to $p(N)=0$. In other words, it suffices to consider polynomials rather than general multipliers.
		
	Note that $\A$ is a finite-dimensional vector space, and put $\delta=\dim \A$. For each $1\leq i\leq \delta$, choose a non-empty open subset $U_i\subset \fH$. Given a polynomial $p$, we denote by  $[p]$ its image in the quotient $\A$.	For every $h\in\fH$, set 
	\[
	\A_h=\{[p]\in\A:p(T)h=0\}.
	\]
We claim that there are vectors $\xi_1\in U_1,\ldots,\xi_\delta\in U_\delta$ such that $\cap_{i=1}^\delta \A_{\xi_i}=\{0\}$. Assume otherwise. For convenience, set $\A_{\xi_0}=\A$.

Fix a non-zero element $[p_1]\in\A_{\xi_0}$. Then, $p_1(T)\neq 0$  so we may choose $\xi_1\in U_1\setminus \ker p_1(T)$. Thus, $[p_1]\notin \A_{\xi_1}$ and in particular $\A_{\xi_1}$ is a proper subspace of $\A_{\xi_0}$. Suppose that for $1\leq i\leq \delta-1$ we have constructed $\xi_1\in U_1,\dots,\xi_i\in U_i$ with the property that $\cap_{j=0}^i \A_{\xi_j}$ is a proper subspace of $\cap_{j=0}^{i-1} \A_{\xi_j}$. By our standing assumption, we see that $\cap_{j=0}^i \A_{\xi_j}$ is non-zero, so there is a non-zero element $[p_{i+1}]\in\cap_{j=0}^{i} \A_{\xi_j}$. Correspondingly, there is a vector $\xi_{i+1}\in U_{i+1}\setminus \ker p_{i+1}(T)$. We see that $[p_{i+1}]\notin \A_{\xi_{i+1}}$, whence $\cap_{j=0}^{i+1}\A_{\xi_j}$ is a non-zero proper subspace of $\cap_{j=0}^{i} \A_{\xi_j}$. By induction, we obtain vectors $\xi_1\in U_1,\ldots,\xi_\delta\in U_\delta$ with the property that for every $1\leq i\leq \delta$, we have that $\cap_{j=0}^i \A_{\xi_j}$ is a non-zero proper subspace of $\cap_{j=0}^{i-1} \A_{\xi_j}$. This forces $\dim \A\geq \delta+1$, contrary to the choice of $\delta$. The claim is established.

Now, let $p$ be a polynomial such that $p(T)\xi_i=0$ for every $1\leq i\leq \delta$. Then, $[p]\in \cap_{i=0}^\delta \A_{\xi_i}$ and therefore $[p]=0$. This shows that $U_1\times \ldots \times U_\delta\subset \fH^{(\delta)}$ contains a separating vector for $T^{(\delta)}$. Since $U_1,\ldots,U_\delta$ were arbitrary non-empty open subsets of $\fH$, we conclude that the set $\Sigma\subset \fH^{(\delta)}$ of separating vectors for $T^{(\delta)}$ is dense in $\fH^{(\delta)}$. Finally, for $s>\delta$ it is readily verified that every vector in $\Sigma \oplus \fH^{(s-\delta)}$ is separating for $T^{(s)}$, so the result follows.
\end{proof}

We record the following simple reformulation.

\begin{corollary}\label{C:sepset}
Let $N=(N_1,\dots,N_d)$ be a  nilpotent commuting row contraction on some Hilbert space $\fH$. Then, there is a finite set of cardinality at most $\delta$ which is separating for $T$, where $\delta$ is the dimension of 
\[
	\bC[x_1,\dots,x_d]/(\bC[x_1,\dots,x_d]\cap \Ann_{\comm}(N)).
	\] 
\end{corollary}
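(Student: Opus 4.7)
The plan is to observe that Corollary \ref{C:sepset} is essentially a direct translation of Theorem \ref{T:AmpliateMax} from the language of ampliations to that of separating sets. The key observation is that, by definition of the ampliation $N^{(s)}$, for any multiplier $\phi \in \M_d$ we have
\[
\phi(N^{(s)}) = \phi(N) \oplus \phi(N) \oplus \cdots \oplus \phi(N)
\]
acting on $\fH^{(s)}$. Consequently, given a vector $(\xi_1,\ldots,\xi_s) \in \fH^{(s)}$, the condition $\phi(N^{(s)})(\xi_1,\ldots,\xi_s) = 0$ is precisely the condition $\phi(N)\xi_i = 0$ for every $1 \leq i \leq s$.

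From this observation it follows immediately that a vector $(\xi_1,\ldots,\xi_s) \in \fH^{(s)}$ is separating for $N^{(s)}$ if and only if the set $\{\xi_1,\ldots,\xi_s\} \subset \fH$ is separating for $N$. The plan is therefore to invoke Theorem \ref{T:AmpliateMax} with $s = \delta$, which guarantees the existence of a separating vector for $N^{(\delta)}$ (in fact, a dense set of them). Extracting the coordinates of any such vector produces a separating set for $N$ of cardinality at most $\delta$.

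There is no real obstacle here: the entire content of the corollary is already contained in the theorem, and the proof amounts to unpacking the definition of ampliation. The only minor point worth noting explicitly is that the cardinality of the extracted set could be strictly less than $\delta$ if some of the coordinate vectors happen to coincide, which is why the statement says "at most $\delta$" rather than "exactly $\delta$".
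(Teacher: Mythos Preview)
Your proposal is correct and takes essentially the same approach as the paper, which simply states that the corollary is an immediate consequence of Theorem \ref{T:AmpliateMax}. You have merely made explicit the passage from a separating vector for the ampliation to a separating set for the original row contraction, which is exactly the intended content.
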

\begin{proof}
This is an immediate consequence of Theorem \ref{T:AmpliateMax}.
\end{proof}

Comparing with Example \ref{Ex:Rectangle}, it is apparent that the upper bound on the cardinality of the separating set in the previous corollary is far from sharp.
In fact, the argument used in Example \ref{Ex:Rectangle} can be refined and extended to provide a better estimate.
Because we do not have a concrete use for it, we only state the result and leave the details of the proof to the interested reader. Recall that we denote by $[p]$ the image of a polynomial $p$ in the quotient $\A$.

\begin{quotation}
	\noindent 	Let $N=(N_1,\dots,N_d)$ be a  nilpotent commuting row contraction on some Hilbert space $\fH$.
	Let
  \[\Omega_e=\{\alpha\in\bN^d:T^\alpha\neq 0\text{ and }T^\alpha T_k=0 \text{ for every  }1\leq k\leq d\}. \]
	If $\{[x^\alpha]:\alpha\in\Omega_e\}$ is a linearly independent subset of $\A$, then $T$ admits a separating set of cardinality at most $\card \Omega_e$.
	\end{quotation}
The reader will note that in Example \ref{Ex:Rectangle}, we have $\card \Omega_e=1$, while for Example \ref{E:maxcount} we find $\card \Omega_e=2$.


\bibliography{/Users/raphaelclouatre/Dropbox/Research/Bibliography/biblio_main}
\bibliographystyle{plain}


\end{document}